\documentclass[10pt]{article}
\DeclareMathSymbol{\varChi}{\mathord}{letters}{88}

\sloppy
\usepackage[latin1]{inputenc}
\usepackage{amsfonts}
\usepackage{mathrsfs}
\usepackage{amsmath}
\usepackage{amssymb}

\usepackage{amsthm}

\usepackage{tensor}
 \usepackage{color}
\definecolor{blu}{rgb}{0.0,0.0,1.0}

\usepackage[T1]{fontenc}

\newcommand{\ud}{\,\mathrm{d}}
\def\K{\mathbb K}
\def\M{\mathbb M}
\def\R{\mathbb R}
\def\X{\mathbb X}
\def\Y{\mathbb Y}
\def\N{\mathbb N}
\def\W{\mathbb W}
\def\Z{\mathbb Z}
\def\A{\mathbb A}
\def\S{\mathbb S}
\def\V{\mathbb V}
\def\Q{\mathbb Q}
\def\P{\mathbb P}

\def \shd{\mathcal D}
\def \shl{\mathcal L}
\def \shp{\mathcal P}
\newtheorem{Theorem}{Theorem}[section]
\newtheorem{Definition}[Theorem]{Definition}
\newtheorem{Proposition}[Theorem]{Proposition}
\newtheorem{Lemma}[Theorem]{Lemma}
\newtheorem{Corollary}[Theorem]{Corollary}
\newtheorem{Hypothesis}[Theorem]{Hypothesis}
\newtheorem{Remark}[Theorem]{Remark}
\newtheorem{Example}[Theorem]{Example}
\newtheorem{Notation}[Theorem]{Notation}


\linespread{0.95}\selectfont

\setlength{\topmargin}{20pt}
\setlength{\textheight}{610pt}
\setlength{\textwidth}{450pt}
\setlength{\headsep}{-20pt}
\setlength{\marginparwidth}{0pt}
\setlength{\marginparsep}{0pt}
\setlength{\oddsidemargin}{10pt}

\author{Giorgio Fabbri\footnote{EPEE, Universit\'e d'Evry-Val-d'Essonne (TEPP, FR-CNRS 3126), D\'epartement d'Economie, 4 bd.
 Fran\c{c}ois Mitterrand, 91025 Evry cedex, France. E-mail: \texttt{giorgio.fabbri@univ-evry.fr}}
 \; and \;  Francesco Russo\footnote{ENSTA ParisTech, Unit\'e de Math\'ematiques appliqu\'ees, 828, boulevard des Mar\'echaux,
F-91120 Palaiseau.
E-mail: francesco.russo@ensta-paristech.fr} }
\title{Infinite dimensional weak Dirichlet processes, stochastic PDEs and optimal control}
\date{February 27th 2014}

\begin{document}

\maketitle

\begin{abstract}
The present paper continues the study of infinite dimensional calculus via
regularization, started by C. Di Girolami and the second named author,
 introducing the notion of \emph{weak Dirichlet process} 
 in this context. Such a process $\X$, taking values in a Hilbert space $H$,
is the sum of a local martingale and a suitable {\it orthogonal} process. \\
The new concept is shown to be useful in several contexts and directions.
On one side,  the mentioned decomposition 
appears to be a  substitute of an It\^o's type formula 
applied to $f(t, \X(t))$ where $f:[0,T] \times H \rightarrow \R$ 
is a $C^{0,1}$ function and, on the other side, 
the idea of weak Dirichlet process fits the widely used 
notion of \emph{mild solution} for  stochastic evolution equations on
infinite dimensional Hilbert spaces, including several classes of
 stochastic partial differential equations (SPDEs).

As a specific application, we provide a verification theorem for stochastic optimal control problems whose state equation is an infinite dimensional stochastic evolution equation.
\end{abstract}

\bigskip

\bigskip

{\bf KEY WORDS AND PHRASES:} Covariation and Quadratic variation;
Calculus via regularization; Infinite dimensional analysis;
 Tensor analysis; Dirichlet processes; 
Generalized Fukushima decomposition; Stochastic partial differential
equations; Stochastic control theory.

\medskip

{\bf 2010 AMS MATH CLASSIFICATION:}    60H05, 60H07, 60H10, 60H30, 91G80.

\bigskip

\section{Introduction}










Stochastic calculus  constitutes one of the basic tools for stochastic
 optimal control theory;
in particular the classical It\^o formula  enables to relate the solution of a control  problem in closed
 loop form with the  (smooth enough) solutions of the related
 Hamilton-Jacobi-Bellman (HJB) equation via some suitable \emph{verification theorems}.

For the finite dimensional systems the literature presents quite precise and general results, see e.g. \cite{FlemingRishel75, YongZhou99}. If the system is infinite dimensional, for instance if it is driven by a stochastic partial differential (SPDEs) or a stochastic delay differential equation, the situation is more complex 
especially when the value function of the problem is not regular enough.

\smallskip

This paper contributes to the subject providing an efficient (infinite dimensional) stochastic calculus which fits the structure of a mild solution of 
 a stochastic evolution equation in infinite dimension and  gives the possibility to prove a verification theorem for a class of stochastic 
optimal control problems with non-regular value function, refining previous results.


\bigskip

The contributions of the paper can be ascribed to the following three ``labels'': 
infinite dimensional stochastic calculus,  stochastic evolution equations in Hilbert spaces 
 and dynamic programming. In the next subsection we describe the state of the art, while in the following we will concentrate on the new results.

\subsection*{State of the art}

\paragraph*{Stochastic calculus}\hspace{-0.43cm}  .

{\it Stochastic calculus via regularization} for real processes was initiated in \cite{RussoVallois91} and \cite{RussoVallois93}. 
It is an efficient calculus for non-semimartingales whose related literature is  surveyed in \cite{RussoVallois07}.

Let $T > 0$. The processes will be indexed by $[0,T]$  adopting the convention described  Notation   \ref{Not0}. In the whole paper $s$
will be a real in $[0,T]$.
All the considered processes will be considered as measurable
from $[s,T] \times \Omega$ (equipped with the product
of the Borel $\sigma$-field of $[s,T]$ and the $\sigma$-algebra
of all events $\mathscr{F}$) and the Borel $\sigma$-field 
of the value space, for instance $\R$.
 Given an a.s. bounded [resp. continuous] real process $Y$ [resp. $X$], the forward integral of $Y$ with respect to $X$ 
and the covariation between $Y$ and $X$ are defined as follows.
Suppose that, for every $t \in [s,T]$, the limit $I(t)$ [resp. $C(t)$]
 in probability exists:
\begin{eqnarray} \label{DefRealIntCov}
I(t) : &=& \lim_{\epsilon \to 0^+} \int_s^t Y(r) \left ( \frac{X(r+\epsilon) - 
X(r)}{\epsilon} \right ) \ud r, t \in [s,T], \nonumber\\
& &  \\
C(t) :&=& \lim_{\epsilon \to 0^+} \int_{s}^{t} \frac{ \left (X({r+\epsilon})-
X({r}) \right )
\left (Y({r+\epsilon})-Y({r})\right )}{\epsilon}dr,  t \in [s,T].  \nonumber
\end{eqnarray}

If the random function $I$ [resp. $C$] admits a continuous version,  this
will be denoted  by  $\int_s^\cdot Y d^-X$ [resp. $[X,Y]$]. 
It is the {\it  forward integral} of $Y$ with respect to $X$ 
[resp. the {\it covariation} of $X$ and $Y$]. If $X$ is a real continuous semimartingale and $Y$ is a c\`adl\`ag
process which is progressively measurable 
 [resp.  a semimartingale], the integral $\int_s^\cdot Y d^-X$ 
[resp. the covariation $[X,Y]$] is the same as the classical It\^o's integral 
[resp. covariation].

The definition of $[X,Y]$ given above is slightly more general (weaker) than the one in
 \cite{RussoVallois07}. There the authors supposed that the convergence in (\ref{DefRealIntCov}) holds in the ucp (uniformly convergence in probability)
 topology.\footnote{Given a Banach space $B$ and a probability space 
$(\Omega, \mathbb{P})$ a family of processes $\X^\epsilon\colon \Omega
 \times [s, T] \to B$ is said to converge in the ucp sense to
 $\X\colon \Omega \times [s, T] \to B$, when $\epsilon$ goes to zero,
if $
\lim_{\epsilon \to 0} 
 \sup_{t\in [s,T]} |\X^\epsilon_t - \X_t|_B = 0,
$
in probability.}
In this work we use the weak definition for the real case, i.e. when both
 $X$ and $Y$ are real, and the strong definition, via ucp convergence,
 when either $X$ or $Y$ is not one-dimensional. When $X=Y$ the two definitions are equivalent taking into account Lemma 2.1 of \cite{RussoVallois07}.

Real  processes  $X$ for which $[X,X]$ exists are called 
{\it finite quadratic variation  processes}.
A rich class of finite quadratic variation processes
is provided by Dirichlet processes.  Let $(\mathscr{F}_t, t \in [0,T])$
be a fixed filtration, fulfilling the usual conditions. A real process $X$ is said to be 
{\it Dirichlet} (or {\it F\"ollmer-Dirichlet}) if it is the sum of a local martingale $M$ and a 
{\it zero quadratic variation process} $A$, i.e. such that $[A,A] = 0$.
Those processes were defined by H. F\"ollmer \cite{FoDir} 
using  limits of discrete sums. 
A significant generalization, due to \cite{errami, GozziRusso06}, is the notion
of {\it weak Dirichlet process}, extended to the case of
jump processes in \cite{coquet}.

\begin{Definition}
\label{def:Dirweak}
A real process $X \colon [s,T]\times \Omega \to \mathbb{R}$ is called weak Dirichlet process if it can be written as
\begin{equation}
\label{eq:dec2}
X=M+A,
\end{equation}
where
\begin{itemize}
\item[(i)]  $M$ is a local martingale,
\item[(ii)]  $A$ is a process such that $\left[ A,N\right] =0$ for every 
continuous local martingale $N$ and $A(s)=0$.
\end{itemize}
\end{Definition}
Obviously a semimartingale  is a weak Dirichlet process.
In Remark 3.5 of \cite{GozziRusso06}, appears the following 
important statement.
\begin{Proposition}
\label{rm:ex418}
The decomposition described in Definition \ref{def:Dirweak} is unique.
\end{Proposition}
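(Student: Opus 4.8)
The plan is to establish uniqueness by showing that if $X = M_1 + A_1 = M_2 + A_2$ are two decompositions of the type described in Definition \ref{def:Dirweak}, then $M_1 - M_2$ is simultaneously a continuous local martingale and a process of the form "$A$" (i.e. having zero covariation with all continuous local martingales). The key observation is that for such a process, testing it against itself forces it to be constant.

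The plan is to reduce uniqueness to the classical fact that a continuous local martingale with identically vanishing quadratic variation which is null at the origin is indistinguishable from $0$. So suppose $X = M_1 + A_1 = M_2 + A_2$ are two decompositions of the type described in Definition \ref{def:Dirweak}, and set $D := M_1 - M_2 = A_2 - A_1$. Since $A_1(0) = A_2(0) = 0$ one has $M_1(0) = X(0) = M_2(0)$, hence $D(0)=0$; moreover $D$ is a (continuous) local martingale, being the difference of the two local martingales $M_1$ and $M_2$.

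First I would check that $D$ inherits property (ii). Given an arbitrary continuous local martingale $N$, both covariations $[A_1,N]$ and $[A_2,N]$ exist and vanish by hypothesis, so by the bilinearity of the covariation via regularization — which holds as soon as the individual covariations exist — we get $[D,N] = [A_2,N] - [A_1,N] = 0$.

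Then I would specialize the previous identity to $N = D$, which is legitimate precisely because $D$ is itself a continuous local martingale; this yields $[D,D] = 0$. Since $D$ is a continuous local martingale, its covariation in the sense of regularization coincides with the classical quadratic variation $\langle D\rangle$ (this is recalled in the excerpt for semimartingales, and for the case $X=Y$ the weak and the ucp definitions agree by Lemma 2.1 of \cite{RussoVallois07}); hence $\langle D\rangle \equiv 0$. Localizing so that $D$ and $D^2 - \langle D\rangle$ become true martingales and using $D(0)=0$ gives $\mathbb{E}[D(t\wedge\tau_n)^2]=0$ for each $n$, whence $D\equiv 0$, i.e. $M_1 = M_2$ and $A_1 = A_2$.

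The only steps requiring any attention are the bilinearity of the covariation and the identification of $[D,D]$ with the classical bracket $\langle D\rangle$ (together with the implicit continuity of $M$ in this framework); all of these are already supplied by the properties of calculus via regularization recalled above, so the argument is essentially immediate and does not present a genuine obstacle.
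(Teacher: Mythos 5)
Your argument is correct: reducing uniqueness to the fact that the difference $D=M_1-M_2=A_2-A_1$ is a continuous local martingale, vanishing at the origin, satisfying $[D,N]=0$ for all continuous local martingales $N$ (by bilinearity of the regularization covariation), hence $[D,D]=\langle D\rangle=0$ and $D\equiv 0$, is exactly the standard proof. The paper itself gives no proof of Proposition \ref{rm:ex418}, deferring to Remark 3.5 of the cited work of Gozzi and Russo, and your reconstruction coincides with the argument given there; your parenthetical caveat about the implicit continuity of $M$ in this framework is the only point that the paper's conventions settle tacitly.
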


Elements of calculus via regularization were extended to Banach space valued
processes in a series of papers, see e.g. \cite{DiGirolamiRusso11, 
DiGirolamiRusso09, DGRNote, DiGirolamiRusso11Fukushima}.
We go on introducing two classical notions of 
stochastic calculus in Banach spaces, which appear in
\cite{MetivierPellaumail80} and
 \cite{Dinculeanu00}: the scalar  and tensor quadratic variations.
 We propose here a regularization
approach, even though, originally they appeared 
in a discretization framework.
The two monographs above use the  term {\it real}
instead of {\it scalar}; we have decided to change it 
to avoid confusion with the quadratic variation of real processes.


\begin{Definition}
\label{def:per-nota}
  Consider a separable Banach spaces $B$.
We say that a process $\X\colon [s,T]\times \Omega \to B$,
a.s. square integrable,
 admits a {\bf scalar quadratic variation} if, for any $t\in [s,T]$,
 the limit, for $\epsilon\searrow 0$ of 
\[
[\X,\X]^{\epsilon, \mathbb{R}}(t):= \int_{s}^{t} \frac{\left 
|\X({r+\epsilon})-\X({r}) \right |^2_B}{\epsilon} dr
\]
exists in probability and it admits a continuous version. The limit process 
is called scalar quadratic variation of $\X$ 
and it is denoted  by  $[\X,\X]^{\mathbb{R}}$. 
\end{Definition}
\begin{Remark}
\label{R15}
The definition  above is equivalent to the one contained in
 \cite{DiGirolamiRusso11}. 
In fact, previous convergence in probability implies the ucp convergence, 
since the $\epsilon$-approximation processes are increasing and
so Lemma 3.1 in \cite{RussoVallois00} can be applied.
\end{Remark}

\begin{Proposition} \label{RBVQV}
Let $B$ be a separable Banach space.
A continuous $B$-valued process with bounded variation
admits a zero scalar quadratic variation.
In particular, a process $\X$ of the type $\X(t) = \int_s^t b(r)dr$, where 
$b$ is a $B$-valued 
measurable process
has a zero scalar quadratic variation. 
\end{Proposition}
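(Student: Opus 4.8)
The plan is to reduce the whole statement to a deterministic, pathwise estimate controlled by the total variation measure. Fix $\omega$ in the full‑probability event on which $r\mapsto \X(r)(\omega)$ is continuous and of bounded variation on $[s,T]$; I suppress $\omega$ from now on and, following the usual convention of calculus via regularization, I set $\X(r):=\X(T)$ for $r>T$, so that the increments $\X(r+\epsilon)-\X(r)$ entering $[\X,\X]^{\epsilon,\R}(t)$ make sense for all $s\le r\le t\le T$. Let $V(r)$ be the total variation of $\X$ on $[s,r]$. Since $\X$ is continuous, $V$ is a \emph{continuous} nondecreasing function on $[s,T]$ (a continuous function of bounded variation has a continuous variation function), constant on $[T,T+\epsilon]$ after the above extension; write $\mu=\mathrm{d}V$ for the associated finite Lebesgue--Stieltjes measure, so $\mu([s,T])=V(T)<\infty$.

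The key inequality is $|\X(r+\epsilon)-\X(r)|_B\le V(r+\epsilon)-V(r)=\mu((r,r+\epsilon])$. Inserting it into Definition \ref{def:per-nota} and bounding one factor by its supremum gives
\begin{equation*}
[\X,\X]^{\epsilon,\R}(t)\ \le\ \int_s^t \frac{\mu((r,r+\epsilon])^2}{\epsilon}\,\ud r\ \le\ \Big(\sup_{s\le r\le T}\mu((r,r+\epsilon])\Big)\,\frac{1}{\epsilon}\int_s^T \mu((r,r+\epsilon])\,\ud r .
\end{equation*}
By Tonelli's theorem $\frac{1}{\epsilon}\int_s^T \mu((r,r+\epsilon])\,\ud r=\frac{1}{\epsilon}\int \mathrm{Leb}(\{r\in[s,T]:u-\epsilon\le r<u\})\,\mu(\ud u)\le \mu([s,T])=V(T)$, independently of $\epsilon$; while $\sup_{s\le r\le T}\mu((r,r+\epsilon])=\sup_{s\le r\le T}(V(r+\epsilon)-V(r))\to 0$ as $\epsilon\to 0^+$ by uniform continuity of $V$ on the compact interval. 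Hence $\sup_{t\in[s,T]}[\X,\X]^{\epsilon,\R}(t)\to 0$ pathwise, a fortiori in probability, so the scalar quadratic variation exists and equals the (trivially continuous) zero process.

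For the second assertion, if $\X(t)=\int_s^t b(r)\,\ud r$ with $b$ a $B$-valued strongly measurable process such that $\int_s^T|b(r)|_B\,\ud r<\infty$ almost surely (which is needed for $\X$ to be a well-defined continuous process), then $\X$ is absolutely continuous, hence of bounded variation, and the first part applies. Alternatively one repeats the estimate above verbatim with $\mu(\ud u)=|b(u)|_B\,\ud u$ and $|\X(r+\epsilon)-\X(r)|_B\le\int_r^{r+\epsilon}|b(u)|_B\,\ud u$, the decay $\sup_{s\le r\le T}\int_r^{r+\epsilon}|b|_B\to 0$ being now just the absolute continuity of the Lebesgue integral.

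The only genuinely delicate point is the decay $\sup_{s\le r\le T}(V(r+\epsilon)-V(r))\to 0$, i.e. the continuity — hence, on a compact, uniform continuity — of the variation function $V$; this is precisely where the continuity hypothesis on $\X$ is used, and it is what excludes the bad behaviour one would see for a pure jump bounded variation process. Everything else is Cauchy--Schwarz/Tonelli bookkeeping, together with the remark that almost sure convergence along paths yields convergence in probability (indeed ucp convergence).
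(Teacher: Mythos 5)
Your argument is correct and is essentially the paper's own: the paper simply remarks that the proof is "very similar" to the real-valued case (Proposition 1 7) b) of \cite{RussoVallois07}), and your pathwise estimate --- bounding the increment by the increment of the continuous variation function $V$, then using Tonelli plus uniform continuity of $V$ --- is precisely that adaptation to the $B$-valued setting. The only point worth keeping explicit is the one you already flag, namely that continuity of $\X$ forces continuity (hence uniform continuity) of $V$, which remains valid for Banach-valued functions of bounded variation.
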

\begin{proof}
The proof is very similar to the one related to the case when $B = \R$,
which was the object of  Proposition 1 
part 7-b, see \cite{RussoVallois07}.
\end{proof}


From \cite{DiGirolamiRusso11} we borrow the following definition.
\begin{Definition}
\label{def:tensorcovariation}
Consider two separable Banach spaces $B_1$ and $B_2$.
Suppose that either $B_1$ or $B_2$ is different from $\mathbb{R}$.
Let  $\X\colon [s,T]\times \Omega \to B_1$ and 
$\Y\colon [s,T]\times \Omega \to B_2$  be two
a.s. square integrable processes.
We say that $(\X,\Y)$ admits a {\bf tensor covariation} if the limit,
 for $\epsilon\searrow 0$ of the $B_1\hat\otimes_\pi B_2$-valued processes
\[
[\X,\Y]^{\otimes, \epsilon}:= \int_{s}^{\cdot} \frac{ \left (\X({r+\epsilon})-\X({r}) \right ) \otimes
\left (\Y({r+\epsilon})-\Y({r})\right )}{\epsilon}dr,
\]
exists ucp. The limit process is called {\bf tensor covariation} of $(\X,\Y)$ 
and is denoted  by  $[\X,\Y]^\otimes$. The tensor covariation $[\X,\X]^\otimes$ is called {\bf tensor quadratic variation} of $\X$ and
is denoted  by  $[\X]^\otimes$.
\end{Definition}


\medskip

The concepts  of scalar and  tensor quadratic variation
are however too strong for the applications: 
indeed several interesting examples
of Banach (or even Hilbert) space valued processes have no tensor quadratic variation.
A Banach space valued example is  the $C([-\tau,0])$-valued process $\X$
defined as the  frame (or window) of a standard Brownian
motion $W$: $\X(t) (x): = W({t+x}), \ x \in [-\tau,0]$.
It is neither a semimartingale,
nor a process with scalar quadratic variation process, see considerations
after Remarks 1.9 and 
Proposition 4.5
of \cite{DiGirolamiRusso11}. 
A second example, that indeed constitutes a main motivation for the present
 paper, 
is given by mild solutions of classical  stochastic evolution equations in infinite dimensions (including SPDEs): they have no scalar quadratic variation even if 
driven by a one-dimensional Brownian motion.

\smallskip
 The idea of Di Girolami and Russo was to introduce
 a suitable space $\chi$ continuously embedded into
the dual of the  projective tensor space
$B_1 \hat \otimes_\pi B_2,$ called {\bf Chi-subspace}.
 \\
  $\chi$ is a characteristics of their notion of quadratic variation, recalled in  Section \ref{sub4.2}.
When $\chi$ is the full space $(B_1 \hat \otimes_\pi B_2)^\ast$ the $\chi$-quadratic variation is called
{\bf global quadratic variation}. 
Following the approach of Di Girolami and Russo, see for instance
Definition 3.4 of \cite{DiGirolamiRusso11Fukushima},
 we make use of a the notion of $\chi$-covariation $[\X,\Y]_\chi$ when 
$\chi \subseteq (B_1\hat\otimes_\pi B_2)^*$ for two processes 
$\X$ and $\Y$ with values respectively in separable
 Hilbert spaces $B_1$ and $B_2$.  That notion is recalled
in Definition \ref{def:covariation}.

\smallskip

\cite{DiGirolamiRusso09} introduces a  (real valued) forward integral,
denoted by
$\int_{s}^{t}\tensor[_{B^*}]{\langle \Y({r}), d^{-}\X({r}) \rangle}{_B}$,
in the case when the integrator $\X$ takes  values in 
a Banach space $B$ and the  integrand $\Y$ is $B^*$-valued.
This appears as  a natural generalization of the first line 
of \eqref{DefRealIntCov}.
That notion is generalized in Definition  \ref{def:forward-integral}
 for operator-valued integrands; in that case, 
this produces a   Hilbert valued
 forward integral.

The It\^o formula for processes $\X$ having a
$\chi$-quadratic variation is given in Theorem 5.2 of 
\cite{DiGirolamiRusso11}. 
Let $F:[0,T] \times B \rightarrow \R$ or class $C^{1,2}$
such that $\partial_{xx}^2F \in C([0,T] \times B; \chi)$. 
Then, for every $t \in [s,T]$,
\begin{multline}\label{DGR-Ito}
F(t,\X({t}))=F(s, \X({s}))+\int_{s}^{t}\partial_{r}F(r,\X({r}))ds+
\int_{s}^{t}\tensor[_{B^*}]{\langle \partial_{x} 
F(r,\X({r})),d^{-}\X({r})\rangle}{_{B}} \\
+\frac{1}{2}\int_{s}^{t} \tensor[_{\chi}]{\langle \partial^2_{xx}
F(r,\X({r})),
d\widetilde{[\X,\X]}_{r}\rangle}{_{\chi^{\ast}}} \ a.s.
\end{multline}
We will introduce the notation $\widetilde{[\X,\X]}$ in Section
 \ref{SecStochInt}.

\paragraph*{Stochastic evolution equations in Hilbert spaces} \hspace{-0.43cm}  .

The class of stochastic evolution equations in infinite dimension 
that we consider can be seen as the abstract and unified formulation of 
several classes of stochastic partial differential differential equations (SPDEs) and stochastic functional differential equations.
They model a significant range of systems
 modeling phenomena arising in very different fields as physics, economics, 
physiology, population growth and migration etc.


The abstract formulation of the stochastic
 evolution equation introduced in Section \ref{sec:SPDEs} is characterized
 by an abstract generator of a $C_0$-semigroup $A$ and Lipschitz coefficients $b$ and $\sigma$.
It appears as
\begin{equation}\label{eq:SPDEIntro}
\left \{
\begin{array}{l}
\ud \X(t) = \left ( A\X(t)+ b(t,\X(t)) \right ) \ud t + \sigma(t,\X(t)) \ud
 \W_Q(t)\\[5pt]
\X(s)=x,
\end{array}
\right.
\end{equation}
where $\W$ is a $Q$-Wiener process with respect to some covariance
operator $Q$.
 As a particular case, when $Q$ is the identity operator, $\W_Q$ represents
a space-time white noise.


An SPDE is a partial differential equation  with random forcing terms or
 coefficients. As described for example in Part III of \cite{DaPratoZabczyk96},
 several families of SPDEs can be reformulated as stochastic evolution equations and then can be studied in the general abstract setting; of course for any of them the specification of the generator $A$ and of the functions $b$ and $\sigma$ are different. The reformulation can be done easier when the SPDE is in the form of deterministic PDE perturbed by a Gaussian noise involving an infinite dimensional Wiener process $\W_Q$ as a multiplicative factor and/or an additive term. Among the SPDEs that can be expressed in the formalism of stochastic evolution equations in 
infinite dimension we recall the stochastic heat and parabolic equations (even with boundary  noise), wave equations, reaction-diffusion equations, first order equations, Burgers equations, Navier-Stokes equations and Duncan-Mortensen-Zakai equations;
 however not all of them fulfill the assumpions of Section \ref{sec:SPDEs}.  In the same way the abstract formulation in the form of
 stochastic evolution equation in infinite dimension can be shown to include classes of stochastic delay differential equations and
 neutral differential equations, see again \cite{DaPratoZabczyk96} and the contained references.  We will recall some examples 
in Remark \ref{rm:examples-spde} of Section \ref{sec:SPDEs}.



There are several different possible ways to define  solutions of stochastic infinite dimensional evolutions equation and SPDEs:
 strong solutions (see e.g. \cite{DaPratoZabczyk92} Section 6.1), 
variational solutions (see e.g. \cite{PrevotRockner07}),
 martingale solutions, see \cite{MikuleviciusRozovskii99} and so on.

We will make use of the notion of \emph{mild solution} 
(see \cite{DaPratoZabczyk92} Chapter 7 or \cite{GawareckiMandrekar10} 
Chapter 3) where the solution of (\ref{eq:SPDEIntro})
 is defined (using, formally, a ``variations of parameters'' arguments) as the solution of the integral equation
\[
\X(t) = e^{(t-s)A}x + \int_s^t e^{(t-r)A} b(r,\X(r)) \ud r + \int_s^t e^{(t-r)A} \sigma(r,\X(r)) \ud \W_Q(r).
\]
This concept is widely used in the literature.

\paragraph*{Optimal control: dynamic programming and verification theorems}\hspace{-0.43cm}  .

As in the study of finite-dimensional stochastic (and non-stochastic)
 optimal control problem, the dynamic programming approach connects
 the study of the  minimization problem with the analysis of the 
related Hamilton-Jacobi-Bellman (HJB) equation: given a solution of 
HJB and a certain
number of hypotheses, the optimal control can be found in feedback form (i.e.
 as a function of the state) through a so called \emph{verification theorem}.
The idea is to identify a solution of the HJB equation with the value function
of the control problem.
When the state equation of
the optimal control problem is an infinite dimensional stochastic
 evolution equation, the related HJB is of second order and
 infinite dimensional.
The  simplest procedure  for establishing a verification theorem consists in considering 
the \emph{regular case} where the solution is assumed to have all the
 regularity needed to give meaning to all the terms appearing in the HJB
in the classical sense: it needs to be $C^1$ in the time variable and $C^2$ in the state variable.
Since in many interesting cases the value function does not have all the required regularity,
 several definitions of (more general) solutions have been introduced for the HJB equation.
 The various possibilities can be classified as follows.

\begin{itemize}
 \item[]
\emph{Strong solutions}: in this approach, first introduced in \cite{BarbuDaPrato83book}, the solution is defined as a proper limit of solutions of regularized problems. Verification results in this framework are given for example in \cite{Gozzi96} and \cite{GozziRouy96}, see \cite{Cerrai01, Cerrai01-40}, for the reaction-diffusion case.
 \item[]
\emph{Viscosity solutions}: in this case the solution is defined using test functions that locally ``touch'' the candidate solution. The viscosity solution approach was first adapted to the second order Hamilton Jacobi equation in Hilbert space in \cite{Lions83I,Lions83II,Lions83III} and then, for the ``unbounded'' case (i.e. including the unbounded operator $A$ appearing in \eqref{eq:SPDEIntro})
 in \cite{Swiech94}. As far as we know, differently from the finite-dimensional case 
there are no verification theorems available  for the infinite-dimensional case.
 \item[]
\emph{$L^2_{\mu}$ approach}: it was introduced in \cite{Ahmed03,GoldysGozzi06}, 
see also \cite{ChowMenaldi97, Ahmed01}. In this case the value function is
found in the space $L^2_{\mu}(H)$, where $\mu$ is an invariant measure for an associated uncontrolled process. The paper \cite{GoldysGozzi06} contains as well an excellent literature survey.  
\item[]
\emph{Backward approach}: it can be applied when the mild solution of the HJB can be represented using the solution of a forward-backward system and
  makes it possible  to find an optimal control in feedback form. It was introduced in \cite{Quenez97} and developed in  \cite{FuHuTe-quadr, FuhrmanTessitore02-ann, FuTe-ell}, see \cite{DeFuTe, FuhrmanMasieroTessitore11} 
for other particular cases.
\end{itemize}

\smallskip

The method we use in the present work to prove the verification theorem does not belong to any of the previous categories even if we use a strong solution approach to define the solution of the HJB. In the sequel of this introduction, we will be more precise.

\subsection*{The contributions of the work}

The novelty of the present paper arises at the 
three levels mentioned above: stochastic calculus,  infinite dimensional stochastic differential equations  and stochastic optimal control.
Indeed stochastic optimal control for infinite dimensional problems
is also a motivation to complete the theory of calculus via regularizations.

The stochastic calculus part starts (Sections \ref{SecStochInt}) with a
 natural extension (Definition \ref{def:forward-integral}) of the 
notion of forward integral in Hilbert (and even Banach) spaces introduced in 
\cite{DiGirolamiRusso09} and with the proof of its equivalence with
 the classical notion of integral when we integrate  a predictable
process w.r.t.
 a $Q$-Wiener process (Theorem \ref{th:integral-forrward=ito-brownian}) 
and w.r.t. a general local martingale
 (Theorem \ref{th:integral-forrward=ito-martingale}). 

In Section
 \ref{SecTensor}, we extend  the notion of Dirichlet process to 
infinite dimension.
Let $\X$ be an $H$-valued stochastic process. According to
the  literature, $\X$ can be naturally 
considered to be an {\it infinite dimensional Dirichlet process} if it is 
the sum of a local martingale  and a zero energy process.
A {\it zero energy process} (with some light sophistications) 
$\X$ is a process such that 
the expectation of the quantity in Definition \ref{def:per-nota}
converges to zero when $\varepsilon$ goes to zero. This happens for instance
in \cite{Denis},  even  though that decomposition also appears
in \cite{maroeck} Chapter VI Theorem 2.5, for processes $\X$ associated with 
an infinite-dimensional Dirichlet form.

Extending  F\"ollmer's notion of Dirichlet
process to infinite dimension, a process $\X$ taking values in 
a Hilbert space $H$, could be called
{\it Dirichlet} if it is the sum of a local martingale $\M$
plus a process $\A$ having a zero scalar quadratic variation.
However that natural notion is not suitable 
for an  efficient stochastic calculus for  infinite dimensional stochastic differential equations.

Similarly to the notion of $\chi$-finite quadratic variation process we 
introduce the notion of $\chi$-Dirichlet process as the sum of a 
local martingale $\M$ and a process $\A$ having a zero
 $\chi$-quadratic variation.

A completely new notion in the present paper 
is the one of  Hilbert valued {\it $\nu$-weak Dirichlet process}
which is
 again related to a Chi-subspace $\nu$ of 
the dual projective tensor  product  $H \hat \otimes_\pi H_1$
where $H_1$ is another Hilbert space, see Definition 
\ref{def:chi-weak-Dirichlet-process}. 
It is of course an extension of the notion of 
real-valued weak Dirichlet process, see Definition \ref{def:Dirweak}.
We illustrate that notion in the simple case when $H_1 = \R$,
$\nu = \nu_0\hat\otimes_\pi \mathbb{R} \equiv \nu_0$
  and $\nu_0$ is a Banach space continuously embedded in $H^*$: a
 process $\X$ is called  $\nu$-weak Dirichlet process if it is the sum
 of a local martingale $\M$ and a process $\A$ such that
 $[\A, N]_{\nu}=0$ for every real continuous local martingale $N$. 
This happens e.g. under the following assumptions.
\begin{itemize}
 \item[(i)] There is a family $(R(\epsilon), \epsilon > 0 )$ of non-negative random variables converging in probability, such
that $ Z(\epsilon) \le R(\epsilon), \epsilon > 0$, where
$$ Z(\epsilon) := \frac{1}{\epsilon} \int_0^T |\A(r+\epsilon) - \A(r)|_{\nu_0^*} |N(r+\epsilon) - N(r)| \ud r.$$

\item[(ii)] 
For all $h\in \nu_0$, $\lim_{\epsilon \to 0^+} \frac{1}{\epsilon}
 \int_0^t \,_{\nu_0}\left\langle \A(r+\epsilon) - \A(r), h\right\rangle_{\nu_0^*} 
(N(r+\epsilon) - N(r)) \ud r =0, \ \forall t \in [0,T].$
\end{itemize}
\begin{Remark} 
We remark that, when condition (i) is fulfilled,
the sequence $(Z(\epsilon))$ is {\it bounded} in
the F-space of random variables, 
 with metric defined by
$ d(X, Y ) = E [ \vert X - Y \vert_B  \wedge 1]$;
this distance governs the convergence in probability.
The notion of bounded subset of an F-space is given in Section II.1 of \cite{dunford}.
\end{Remark}


At the level of pure stochastic calculus, the most important result,
 is Theorem \ref{th:prop6}. 
It generalizes to the Hilbert values framework, 
Proposition 3.10  of \cite{GozziRusso06} which states
that given $f:[0,T] \times \R \rightarrow \R$ of class
$C^{0,1}$ and $X$ is a weak Dirichlet process with
finite quadratic variation then
$Y(t) = f(t, X(t))$ is a real weak Dirichlet process. 
This result is a Fukushima decomposition in the spirit of Dirichlet forms,
 which is the natural extension of Doob-Meyer
decomposition for semimartingales.
It can also be seen as a substitution-tool of It\^o's formula if $f$ is not smooth.
Besides Theorem  \ref{th:prop6},  an interesting general It\^o's formula 
 in the application to mild solutions of
 infinite dimensional stochastic differential equations \color{black} is  Theorem \ref{th:exlmIto}.
The  stochastic calculus theory
developed in Sections \ref{SecStochInt} and \ref{SecTensor},
 makes it possible  to prove that a
mild solution of an equation   
of  type \eqref{eq:SPDEIntro}
is a ${\chi}$-Dirichlet process and a ${\nu}$-weak-Dirichlet 
process; this is done in Corollary \ref{cor:X-barchi-Dirichlet}.
\medskip

As far as stochastic control is concerned, 
the main issue is the verification result stated in Theorem 
\ref{th:verification}. 
As we said, the method we used does not belong to any of the
 described families even if we define the solution of the HJB
 in line with  a strong solution approach.
Since the solution $v:[0,T] \times H \rightarrow \R$ 
 of the HJB equation is only of  class $C^{0,1}$
(with derivative in $C(H, D(A^*))$),
 we cannot apply a It\^o formula of class $C^{1,2}$.
The substitute of such a formula is given in 
 Theorem \ref{th:decompo-sol-HJB}, which is
based on the uniqueness character of the decomposition
of the real weak Dirichlet process $v(t,\X(t))$,
where $\X$ is a solution of the state equation $\X$.
The fact that  $v(t,\X(t))$ is weak Dirichlet 
follows  by    Theorem \ref{th:prop6}
because $\X$ is a $\nu$-weak Dirichlet process
for some suitable space $\nu$.
This is the 
first work that employs this method in infinite dimensions.
 A similar approach was used to deal with the finite dimensional case 
in \cite{GozziRusso06Stoch} but of course in the infinite-dimensional 
case the situation is much more complicated since
the state equation is not a semimartingale and so
  it indeed  requires the introduction of the concept of
 $\nu$-weak Dirichlet process. 

For the reasons listed below, Theorem \ref{th:verification} is more general 
than the results obtained with the  classical strong solutions approach, see e.g. \cite{Gozzi96, GozziRouy96}, and, in a context slightly different than ours, \cite{Cerrai01, Cerrai01-40}.
\begin{itemize}
 \item[(1)] The state equation is more general. 

(a)
 In equation (\ref{eq:state}) the coefficient $\sigma$ depends on 
 time and on the state while in classical strong solutions literature,
 it is  constant and equal
to identity.

(b) In classical strong solutions contributions,
 the coefficient $b$ appearing in 
equation (\ref{eq:state}) is of the particular form  $b(t,\X, a) = b_1(\X) + a$ so it
``separates'' the control and the state parts.

 \item[(2)] We only need  the Hamiltonian  to be well-defined 
and continuous without any particular differentiability, 
 differently to what happens 
in the classical strong solutions literature.
 \item[(3)] We use a milder definition of solution than in
 \cite{Gozzi96, GozziRouy96}; indeed we work with a bigger set 
of approximating functions: in particular (a), our domain $D(\mathscr{L}_0)$,
 does not require the functions and their derivatives to be uniformly
 bounded; (b),  the convergence of the derivatives
 $\partial_x v_n \to \partial_x v$ in (\ref{def:strong-sol})
 is not necessary and it is replaced by
  the weaker condition  (\ref{eq:b}).

\end{itemize}
However, we have to pay a price:
we  assume  that the gradient of the solution of the HJB $\partial_x v$, 
is continuous from $H$ to $D(A^*)$, instead of simply continuous
 from $H$ to $H$.

\smallskip

In comparison to the strong solutions approach,
the $L^2_\mu$ method used  in \cite{GoldysGozzi06}  permits to use \color{black} weaker assumptions
 on the data and enlarges the range of possible applications. 
However, the authors still require  $\sigma$ to be the identity, 
the Hamiltonian to be Lipschitz  and the
 coefficient $b$ to be in a ``separated'' form as in (1)(b) above.
 In the case treated by  \cite{Ahmed03}, the terms containing the control
 in the state variable is more general but the author assumes
 that $A$ and $Q$ have the same eigenvectors.
 So, in both cases, the assumptions on the state equation are
 for several aspects more demanding than ours.

 The backward approach, used e.g. 
in  \cite{ FuHuTe-quadr, FuhrmanTessitore02-ann, FuTe-ell}, 
allows to treat degenerate cases in which the transition semigroup has no 
smoothing properties. Still, in the verification results proved
 in this context, the Hamiltonian has to be differentiable, 
the dependence on the control in the state equation is assumed 
to be linear and its coefficient needs to have a precise relation 
with  $\sigma(t, \X(t))$. All those  hypotheses are stronger than ours.


\smallskip
One important feature of Theorem \ref{th:verification} is  
 that we do not need to assume any hypothesis to ensure 
the integrability of the target.
This is due to the fact that we apply 
the expectation operator only at the last moment.
We stress that, as far as we know, all the available verification theorems for optimal control problems driven by infinite dimensional stochastic evolution equations existing in the literature, obtained with any of the described method, require at least the $C^1$-regularity of the value function w.r.t. the state variable. Our method is not an exception in this sense but, as described below, we can avoid
 a series of assumptions needed with other approaches. It remains true that, in many interesting cases, the value function fails even to be $C^1$ w.r.t. the state variable.

\smallskip

A different approach to infinite dimensional stochastic optimal control
 problems is given by the use of maximum principle. Even recently, a series of
 interesting contributions on the subject appeared, see for example \cite{DuMeng-1, DuMeng-2, Fuhrman13, Fuhrman12,  Oksendal}. As in finite dimension (see e.g. \cite{YongZhou99}), if the Hamiltonian is not convex, the maximum principle approach only gives  necessary conditions but it does not ensure the sufficiency. Moreover, at this stage, maximum principle results for stochastic infinite dimensional problems need a strong regularity on the coefficients and they allow to study mainly  the case of  finite-dimensional noise. 





\bigskip

The scheme of the work is the following.
After some preliminaries in Section \ref{sec:preliminaries}, 
in Section \ref{SecStochInt} we introduce the definition of forward integral
 with values in Banach spaces and we discuss the relation 
with the Da Prato-Zabczyk integral in the Hilbert framework; 
for simplicity we do not treat the Banach case. Section \ref{SecTensor}, devoted to
stochastic calculus, is the core of the paper: we introduce the concepts
of  $\chi$-Dirichlet processes,
 $\nu$-weak-Dirichlet processes and we study their general properties.
 In Section \ref{sec:SPDEs}, the developed theory is applied to
 the case of mild solutions of stochastic PDEs and more in general of
  infinite dimensional stochastic differential equations, while Section \ref{sec:optimal-control}
 contains the application to stochastic optimal control problems 
in Hilbert spaces.

\section{Preliminaries and notations}

\label{sec:preliminaries}


\subsection{Basic functional analysis}
\label{SBN}
Let us consider two separable Banach spaces ${B_1}$ and ${B_2}$. 
We denote by $C(B_1; B_2)$ the set of the
continuous functions from  ${B_1}$ to ${B_2}$. 
This linear space is a topological vector space
if equipped with topology of the uniform convergence on
compact sets.



If  ${B_2}=\mathbb{R}$ we will often simply use the notation $C({B_1})$
 instead of $C({B_1};\mathbb{R})$. Similarly, given a real interval $I$,
typically $I = [0,T]$ or $I = [0,T)$, 
we use the notation $C(I \times {B_1}; {B_2})$ for the set of the
 continuous ${B_2}$-valued
 functions defined on $I\times {B_1}$ while we use the lighter notation 
$C(I \times {B_1})$ when ${B_2}=\mathbb{R}$.
 $C^1(I \times {B_1}$ will denote the space of 
Fr\'echet continuous differentiable functions 
$u: I \times B_1 \rightarrow \R$.
For a function $u: I \times B_1 \rightarrow \R, \ 
(t,x) \mapsto u(t,x) $, we denote by
$(t,x) \mapsto \partial _x u(t,x) $ [resp. $(t,x) \mapsto \partial^2_{xx}
 u(t,x) $]
(if it exists) the first [resp. second] Fr\'echet 
derivative  w.r.t. the variable $x \in {B_1}$).
Eventually a function $
(t,x) \mapsto u(t,x)
 \in C(I \times {B_1})$  [resp. $u  \in C^1(I \times {B_1})$] 
  will be said to belong to  $C^{0,1}(I \times {B_1})$
 [resp. $C^{1,2}(I \times {B_1})$]
 if $\partial _x  u$  exists and it is continuous,
i.e. it belongs to $C(I \times {B_1};{B_1}^*)$ 
 [resp.  $  \partial^2_{xx} u (t,x$] exists for any $(t,x) \in
[0,T] \times B_1$ and it is continuous, i.e.  it belongs
to $C(I \times {B_1}; {\mathcal Bi}({B_1},{B_1}))$,
where  ${\mathcal Bi}({B_1},{B_2}))$ is the linear topological
space of bilinear bounded forms on $B_1 \times B_2$.
\\
By convention all the continuous functions defined
on an interval $I$ are naturally extended 
by continuity to $\R$.

We denote 
by $\mathcal{L}({B_1}; {B_2})$ the space of linear bounded maps from 
${B_1}$ to ${B_2}$
and  by
$\Vert \cdot \Vert_{\mathcal{L}({B_1}; {B_2})}$ the corresponding norm.
We will often indicate in the sequel by a double bar, i.e.
 $\Vert \cdot \Vert$, the norm of an operator.

Often we will consider the case of two separable Hilbert spaces $U$ and $H$.
We denote $|\cdot|$ and $\left\langle \cdot , \cdot \right\rangle$ 
[resp. $|\cdot|_U$ and $\left\langle \cdot , \cdot \right\rangle_U$] the norm 
and the inner product on $H$ [resp. $U$].
\begin{Notation} \label{NotHilbert}
If $H$ is a Hilbert space, in order to argue more transparently, we  often distinguish between $H$ and its dual $H^*$ and
 with every element $h\in H$ we associate $h^*\in H^*$ through Riesz Theorem.
\end{Notation}

If $U= H$, we set $\shl(U) := \shl(U;U)$.
$\shl_2(U;H)$ will be the set of
 {\it Hilbert-Schmidt} operators from $U$ to $H$ and $\shl_1(H)$
 [resp. $\shl_1^+(H)$] will be the space of [resp. non-negative] 
{\it nuclear} operators on $H$.
For  details about the notions of Hilbert-Schmidt and nuclear operator, 
the reader may consult \cite{Ryan02}, Section 2.6 and \cite{DaPratoZabczyk92} Appendix C.
If $ T \in \shl_2(U;H)$ and $T^\ast: H\rightarrow U$ is the adjoint operator, 
then $T T^\ast \in \shl_1(H)$ and the Hilbert-Schmidt
norm of $T$ gives
$\Vert T \Vert^2_{\shl_2(U;H)} = \Vert T T ^\ast \Vert_{\shl_1(H)}.$
We recall that, for a generic element $T\in {\mathcal L}_1(H)$
 and given 
 a basis $\left \{ e_n \right \}$ of $H$, the sum
$
\sum_{n=1}^{\infty} \left \langle T e_n, e_n  \right \rangle
$
is absolutely convergent and independent of the chosen basis 
$\left \{ e_n \right \}$. It is called \emph{trace} of $T$
 and denoted by ${\mathrm Tr}(T)$. 
$\shl_1(H)$ is a Banach space and we denote by $\Vert \cdot \Vert_{\shl_1(H)}$
the corresponding norm.
If $T$ is non-negative then ${\mathrm Tr}(T) = \| T \|_{{\mathcal L}_1 (H)}$ and
 in general we have the inequalities
\begin{equation}
\label{eq:trace-h1}
|{\mathrm Tr}(T)| \leq \| T \|_{{\mathcal L}_1 (H)}, \quad
\sum_{n=1}^{\infty} |\left \langle T e_n, e_n  \right \rangle| \leq \| T
 \|_{{\mathcal L}_1 (H)},
\end{equation}
see Proposition C.1,
\cite{DaPratoZabczyk92}.
As a consequence,  if $T$ is a non-negative operator,  the  relation below
\begin{equation} \label{EHSQ}
\Vert T \Vert_{\shl_2(U;H)}^2 = {\mathrm Tr} (T T^\ast),
\end{equation}
will be very useful in the sequel.

\subsection{Reasonable norms on tensor products}

\label{SRN}

Consider two real Banach spaces $B_1$ and $B_2$. Denote, for $i=1,2$, with 
$|\cdot|_i$ the norm on $B_i$.
$B_1\otimes B_2$ stands for the \emph{algebraic tensor product} i.e.
 the set of the elements of the form $\sum_{i=1}^n x_i\otimes y_i$ where
 $x_i$ and  $y_i$ are respectively elements of $B_1$ and $B_2$. 
On $B_1\otimes B_2$ we identify all the expressions we need in order to ensure 
that the product $\otimes\colon B_1\times B_2 \to B_1\otimes B_2$ is bilinear.

 On $B_1\otimes B_2$ we introduce the projective norm $\pi$ defined, for all $u\in B_1\otimes B_2$, as
\[
\pi(u) := \inf \left \{ \sum_{i=1}^n  |x_i|_{B_1} |y_i|_{B_2} \; : \; 
u = \sum_{i=1}^{n}  x_i \otimes y_i  \right \}.
\]
The \emph{projective tensor product of $B_1$ and $B_2$}, $B_1\hat\otimes_\pi B_2$, is the Banach space obtained as completion of $B_1\otimes B_2$ for the norm $\pi$, see \cite{Ryan02} Section 2.1, or \cite{DiGirolamiRusso09}
 for further details. 

For $u \in B_1\otimes B_2$ of the form $u=\sum_{i=1}^{n} x_i \otimes y_i$
 we define
\[
\varepsilon(u) := \sup \left \{ \left | \sum_{i=1}^n 
 \Phi(x_i) \Psi(y_i) \right | \; : \; \Phi\in B_1^* 
, \; \Psi\in B_2^* 
, \; |\Phi|_{B_1^*} = |\Psi|_{B_2^*} =1 \right \}
\]
and denote  by  $B_1\hat\otimes_\epsilon B_2$ the completion of $B_1\otimes B_2$ for such a norm: it is the   
\emph{injective tensor product of $B_1$ and $B_2$}. We remind that $\epsilon(u)$ does not depend on the representation of $u$ 
and that, for any $u \in B_1\otimes B_2$, $\varepsilon(u) \leq \pi(u)$.

A norm $\alpha$ on $B_1\otimes B_2$ is said to be \emph{reasonable} if for any $u \in B_1\otimes B_2$, 
\begin{equation}
\label{eq:reasonable}
\varepsilon(u) \leq \alpha(u) \leq \pi(u).
\end{equation}

We denote  by $B_1\hat\otimes_\alpha B_2$ the completion of $B_1\otimes B_2$ w.r.t. the norm $\alpha$.
For any reasonable norm $\alpha$ on $B_1\otimes B_2$, for any $x\in B_1$ and $y\in B_2$ one has $\alpha(x\otimes y) = |x|_{B_1} |y|_{B_2}$.
 See \cite{Ryan02} Chapter 6.1 for details.

\begin{Lemma}
\label{lm:che-era-remark}
Let $B_1$ and $B_2$ be two real Banach spaces and $\alpha$ a reasonable norm on $B_1\otimes B_2$. We denote  $B := B_1\hat\otimes_\alpha B_2$.
 Choose $a^* \in B_1^*$ and  $b^* \in B_2^*$. One can associate to $a^*\otimes b^*$ the elements $i(a^* \otimes b^*)$ of $B^*$ acting as follows on a generic element $u = \sum_{i=1}^{n}  x_i \otimes y_i \in B_1\otimes B_2$:
\[
\left\langle i(a^*\otimes b^*), u \right \rangle = \sum_i^n \left\langle a^*, x_i \right\rangle \left\langle b^*, y_i \right\rangle.
\]
Then $i(a^*\otimes b^*)$ extends by continuity to the whole
 $B$ and  
\begin{equation} \label{TensEq}
 \vert i(a^*\otimes b^*) \vert_B =  |a^*|_{B_1^*}|b^*|_{B_2^*}.
\end{equation}
\end{Lemma}
\begin{proof}
We first prove the $\leq$  inequality in \eqref{TensEq}. 
Setting $\tilde a^* = \frac{a^*}{\vert a^* \vert}, \tilde b^* = \frac{b^*}{\vert b^* \vert}$ we write
\begin{equation}
\left\langle i(a^* \otimes b^*), u \right\rangle = \left\langle i(\tilde a^* \otimes \tilde b^*),
 u \right\rangle  \vert a^* \vert \vert b^* \vert
\le \varepsilon(u) |a^*|_{B_1^*}|b^*|_{B_2^*}.
\end{equation}
The latter inequality comes from the definition of injective tensor norm $\varepsilon$, 
considering first $\Phi = a^*, \Psi = B^*$. 
By (\ref{eq:reasonable}) 
$\left\langle i(a^* \otimes b^*), u \right\rangle \leq \alpha(u) |a^*|_{B_1^*}|b^*|_{B_2^*}$ and the 
$\leq$ inequality of  \eqref{TensEq}
 is proved.
\

Concerning the converse inequality, we have
\[
|a^*|_{B_1^*} = \sup_{|\phi|_{B_1}=1} \,_{B_1^*}\left\langle a^*, \phi \right\rangle_{B_1}
\]
and similarly
 for $b^*$. So, chosen $\delta>0$, there exist $\phi_1\in B_1$ and $\phi_2\in B_2$ with $|\phi_1|_{B_1}=|\phi_2|_{B_2}=1$ and
\[
|a^*|_{B_1^*} \leq \delta + \,_{B_1^*}\left\langle a^*, \phi_1 \right\rangle_{B_1}, \qquad
|b^*|_{B_2^*} \leq \delta + \,_{B_2^*}\left\langle b^*, \phi_2 \right\rangle_{B_2}.
\]
We set $u:= \phi_1\otimes \phi_2$. We obtain
\begin{multline}
|i(a^*\otimes b^*)|_{B^*} \geq \frac{\,_{B^*}\left\langle i(a^*\otimes b^*), u \right\rangle_{B}}{|u|_{B}}=
\frac{\,_{B^*}\left\langle i(a^*\otimes b^*), u \right\rangle_{B}}{|\phi_1|_{B_1}|\phi_2|_{B_2}}\\
 = 
\,_{B_1^*}\left\langle a^*, \phi_1 \right\rangle_{B_1}  \,_{B_2^*}\left\langle 
 b^*, \phi_2 \right\rangle_{B_2}
\geq (|a^*|_{B_1^*} - \delta) (|b^*|_{B_2^*} - \delta).
\end{multline}
Since $\delta>0$ is arbitrarily small we finally obtain
\[
|i(a^*\otimes b^*)|_{B^*} \geq |a^*|_{B_1^*} |b^*|_{B_2^*}.
\]
This gives the second inequality and concludes the proof.
\end{proof}

\begin{Notation} \label{Not1}
When $B_1=B_2$ and $x\in B_1$ we denote  by  $x\otimes^2$ the element $x\otimes x \in B_1 \otimes B_1$. 
\end{Notation}

The dual of the projective tensor product  $B_1\hat\otimes_\pi B_2$,
denoted by $(B_1\hat\otimes_\pi B_2)^*$, can be identified isomorphically
with the linear space of bounded bilinear forms on $B_1\times B_2$
 denoted  by   ${\mathcal Bi}(B_1,B_2)$. If $u \in (B_1\hat\otimes_\pi B_2)^*$
and $\psi_u $ is the associated form in ${\mathcal Bi}(B_1,B_2)$,
we have
 $$\vert u \vert_{(B_1\hat\otimes_\pi B_2)^*} = \sup_{\vert a \vert_{B_1} \le 1, 
\vert b \vert_{B_2} \le 1} \vert \psi_u(a,b) \vert. $$
See for this  \cite{Ryan02} Theorem 2.9  Section 2.2, page 22 and also
 the discussion after the proof of the theorem, page 23.


Every  element $u \in H\hat\otimes_\pi H$  is isometrically associated 
with an element $T_u$ in the space of nuclear operators  
${\mathcal L}_1 (H,H)$, defined, for $u$ of the form
  $\sum_{i=1}^{\infty} a_n\otimes b_n$, as follows:
 \[
 T_u(x) := \sum_{i=1}^{\infty} \left\langle x, a_n \right\rangle b_n,
 \]
see for instance \cite{Ryan02} Corollary 4.8 Section 4.1 page 76.

Since $T_u$ is nuclear, in particular 
(see Appendix C of \cite{DaPratoZabczyk92}), there
 exists a sequence of real numbers $(\lambda_n)$ and an orthonormal
 basis $(h_n)$ of $H$ such that $T_u$ can be written as
\begin{equation}
\label{eq:expressionTu}
 T_u(x)= \sum_{n=1}^{+\infty} \lambda_n \left \langle h_n, x \right\rangle h_n, \qquad \text{for all $x\in H$};
\end{equation}
in particular $T_u(h_n) = \lambda_n h_n$ for each $n$.
 Moreover $u$ can be written as
\begin{equation}
\label{eq:expressionu}
u= \sum_{n=1}^{+\infty} \lambda_n h_n \otimes h_n.
\end{equation}

To each element $\psi $ of $(H\hat\otimes_\pi H)^*$ we associate 
a bilinear continuous operator $B_\psi$ 
and a linear continuous operator 
$L_\psi: H \rightarrow H$ 
(see \cite{Ryan02} page 24, the discussion before Proposition 2.11 
Section 2.2) such that
\begin{equation}
\label{eq:expressionLB}
\left\langle L_\psi (x), y \right\rangle = B_\psi(x,y) = 
\psi(x\otimes y)\qquad \text{for all $x, y \in H$}.
\end{equation}

\begin{Proposition} \label{PPTTT}
Let  $u \in  H\hat\otimes_\pi H$  and $\psi \in (H\hat\otimes_\pi H)^*$
with associated maps
 $T_u \in  \mathcal{L}_1(H), L_\psi \in   \mathcal{L}(H)$.
Then
\[
{}_{(H\hat\otimes_\pi H)^*} 
\langle \psi,u \rangle_{H\hat\otimes_\pi H} = {\rm Tr} \left (L_\psi T_u \right ).
\]
\end{Proposition}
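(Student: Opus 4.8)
The plan is to reduce both sides of the identity to one and the same series, by going through the diagonal representations \eqref{eq:expressionTu}--\eqref{eq:expressionu} of $u$ and $T_u$ and then matching them termwise. I will use \eqref{eq:expressionLB} to describe the action of $\psi$, the basis-independence of the trace of a nuclear operator (Proposition C.1 in \cite{DaPratoZabczyk92}), and the fact recalled above that the product of a nuclear operator with a bounded one is nuclear, so that the trace ${\rm Tr}(T_u B_\psi)$ is meaningful. Throughout, $L_\psi\in\mathcal{L}(H,H)$ denotes the bounded operator attached to $\psi$ by \eqref{eq:expressionLB} (the operator written $B_\psi$ in the statement), and $T_u\in\mathcal{L}_1(H,H)$ the nuclear operator of \eqref{eq:expressionTu}.

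First I would fix the orthonormal basis $(h_n)$ of $H$ and the real sequence $(\lambda_n)$ furnished by \eqref{eq:expressionu}, so that $u=\sum_n\lambda_n\,h_n\otimes h_n$ and $T_u(x)=\sum_n\lambda_n\langle h_n,x\rangle h_n$. Nuclearity of $T_u$ gives $\sum_n|\lambda_n|<\infty$; since moreover $\pi(\lambda_n\,h_n\otimes h_n)=|\lambda_n|$, the partial sums of $\sum_n\lambda_n\,h_n\otimes h_n$ are Cauchy for $\pi$, so the series converges to $u$ in $H\hat\otimes_\pi H$. Because $\psi$ is linear and continuous on $H\hat\otimes_\pi H$, I may apply it term by term and use \eqref{eq:expressionLB} to get
\[
{}_{(H\hat\otimes_\pi H)^*}\langle\psi,u\rangle_{H\hat\otimes_\pi H}
=\sum_n\lambda_n\,\psi(h_n\otimes h_n)
=\sum_n\lambda_n\,\langle L_\psi h_n,h_n\rangle,
\]
the last series converging absolutely because $|\langle L_\psi h_n,h_n\rangle|\le\|L_\psi\|_{\mathcal{L}(H,H)}$ and $\sum_n|\lambda_n|<\infty$.

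Next I would compute ${\rm Tr}(T_u B_\psi)={\rm Tr}(T_u L_\psi)$. As $T_u L_\psi$ is nuclear, its trace does not depend on the orthonormal basis, and I may evaluate it along $(h_m)$ itself, getting ${\rm Tr}(T_u L_\psi)=\sum_m\langle T_u(L_\psi h_m),h_m\rangle$. Expanding $T_u(L_\psi h_m)=\sum_n\lambda_n\langle h_n,L_\psi h_m\rangle h_n$ (a series convergent in $H$, since $\sum_n|\langle h_n,L_\psi h_m\rangle|^2=|L_\psi h_m|^2$ by Parseval) and pairing with $h_m$, only the term $n=m$ survives because $\langle h_n,h_m\rangle=\delta_{nm}$, so $\langle T_u(L_\psi h_m),h_m\rangle=\lambda_m\langle L_\psi h_m,h_m\rangle$. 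Summing over $m$ gives ${\rm Tr}(T_u L_\psi)=\sum_m\lambda_m\langle L_\psi h_m,h_m\rangle$, which is precisely the quantity obtained above for $\langle\psi,u\rangle$; this proves the identity.

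The only genuinely delicate steps are the two limit interchanges: applying $\psi$ term by term to the $\pi$-convergent expansion of $u$ (legitimate by continuity of $\psi$), and evaluating the trace of $T_u L_\psi$ along the special basis $(h_m)$ instead of an arbitrary one (legitimate by basis-independence of the trace, which is exactly why the nuclearity of $T_u L_\psi$ is needed). Once these are granted, the rest is bookkeeping — the orthonormality relation collapses the double sum, so no rearrangement is required — and I do not anticipate a real obstacle; the point of passing through the diagonal form \eqref{eq:expressionu} is merely to reduce the trace to that orthonormality relation and keep the argument short.
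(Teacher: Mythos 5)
Your argument is correct and follows essentially the same route as the paper: both pass through the diagonal representation \eqref{eq:expressionu} of $u$, apply $\psi$ termwise via \eqref{eq:expressionLB} to get $\sum_n\lambda_n\langle L_\psi h_n,h_n\rangle$, and identify this with ${\rm Tr}(T_uB_\psi)$ computed along the basis $(h_n)$. Your version merely makes explicit the justifications (the $\pi$-convergence of the series, continuity of $\psi$, and basis-independence of the trace) that the paper leaves implicit.
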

\begin{proof}
The claim follows from what we have recalled above. Indeed, using (\ref{eq:expressionu}) and (\ref{eq:expressionLB}) we have
\[
{}_{(H\hat\otimes_\pi H)^*} 
\langle \psi,u \rangle_{H\hat\otimes_\pi H} = \psi(u) = \psi\left ( \sum_{i=1}^{+\infty} \lambda_n h_n \otimes h_n \right ) = \sum_{n=1}^{+\infty} \left \langle L_\psi (\lambda_n h_n), h_n \right\rangle
\]
and the last expression is exactly ${\rm Tr} \left (L_\psi T_u \right )$ when
 we compute it using the basis $h_n$.
\end{proof}

\subsection{Probability and stochastic processes}

In the whole paper, there will be an underlying complete
probability space $\left( \Omega,\mathscr{F},\mathbb{P}\right)$.
Fix $T>0$ and $s\in [0,T)$. 
Let $\left \{ \mathscr{F}^s_t \right \}_{t\geq s}$ be a filtration  
satisfying the usual conditions.
Given a subset $\tilde\Omega \in \mathscr{F}$ we denote  by  $I_{\tilde\Omega}\colon \Omega \to \{0,1\}$ the characteristic function of the set $\tilde\Omega$, 
i.e. $I_{\tilde\Omega} (\omega) = 1$ if and only if $\omega \in \tilde\Omega$.

\smallskip

 Given a real Banach space $B$ we denote by $\mathscr{B}(B)$ the Borel $\sigma$-field on $B$. 

By default we assume that all the processes $\X \colon [s,T]\times \Omega \to B$ are measurable functions  
with respect to the product $\sigma$-algebra 
$\mathscr{B}([s,T]) \otimes \mathscr{F}$ with values in $(B, \mathscr{B}(B))$.
The dependence of a process on the variable $\omega\in\Omega$ is emphasized only if needed by the context. When we say that a process is 
continuous [resp. left continuous, right continuous, c\`adl\`ag, c\`agl\`ad ...] we mean that almost all its paths are continuous
 [resp. left-continuous, right-continuous, c\`adl\`ag, c\`agl\`ad...].

 Let $\mathscr{G}$ be a sub-$\sigma$-field of $\mathscr{B}([s,T])\otimes \mathscr{F}$.
We say that such a process  $\X: ([s,T]\times \Omega,\mathscr{G}) \to B$ is 
measurable with respect to $ \mathscr{G}$ if it is the measurable in the usual sense.
It is said 
{\it strongly (Bochner) measurable} (with respect to $ \mathscr{G}$) if it
is the limit of $\mathscr{G}$-measurable countable-valued functions.  
If $\X$ is  measurable and $\X$ 
is c\`adl\`ag, c\`agl\`ad  or if $B$ is separable then $\X$ is strongly measurable. 
The $\sigma$-field $\mathscr{G}$  will not be mentioned when it is clearly designated.
We denote  by  $\mathscr{P}$ the predictable $\sigma$-field on 
$[s,T] \times \Omega$.
The processes $\X$ measurable on $(\Omega \times [s,T], {\mathcal P})$
are also called {\it predictable} processes. All those processes will be considered
as strongly measurable, with respect to $\shp$.
 Each time we use expressions as
 ``adapted'', 
''predictable'' etc... we will always mean ``with respect to
 the filtration $\left \{ \mathscr{F}^s_t \right \}_{t\geq s}$''.
 
\begin{Notation} \label{Not0}
The \emph{blackboard bold} letters $\X$, $\Y$, $\M$... are used for Banach
 (or Hilbert)-space valued) processes,
 while notations $X$ (or $Y$, $M$...) are reserved for real valued processes.
\end{Notation}

\begin{Notation} \label{Not00}
We always assume the following convention: when needed all the 
Banach space  c\`adl\`ag processes (or functions) 
 indexed by $[s,T]$ are extended setting $\X(t)=\X(s-)$ for $t\leq s$ and $\X(t)=\X(T)$ for $t\geq T$.
\end{Notation}

\section{Stochastic integrals}

\label{SecStochInt}

We adopt the notations introduced in Section \ref{sec:preliminaries}.

\begin{Definition}
\label{def:forward-integral}
Let $\X\colon \Omega \times [s,T] \to \mathcal{L}(U,H)$ and $\Y\colon 
\Omega \times [s,T] \to U$ be two stochastic processes. Assume that $\Y$ 
is continuous and $\X$ is Bochner integrable.

If for almost every $t\in [s,T]$ the following limit  (in the norm of
 the space $H$) exists in probability
\[
\int_s^t \X(r) \ud ^- \Y(r):= \lim_{\epsilon \to 0^+} \int_s^t \X(r) 
\left (\frac{\Y(r+\epsilon) - \Y(r)}{\epsilon} \right ) \ud r
\]
and the process $t\mapsto \int_s^t \X(r) \ud ^- \Y(r)$ admits a continuous
 (in $H$) version, we say that $\X$ is forward integrable with respect to
 $\Y$. That  version of $\int_s^\cdot \X(r) \ud ^- \Y(r)$ is called
\emph{forward integral of $\X$ with respect to $\Y$}.
\end{Definition}

\begin{Remark}\label{R32}
\begin{enumerate}
\item
The definition  above is a natural generalization of that given 
in \cite{DiGirolamiRusso09} Definition 3.4; there the forward integral 
is a real valued process.
\item Previous integral definition extends to the case when
$U$ and $H$ are separable Banach spaces.
\end{enumerate}
\end{Remark}

\subsection{The case of $Q$-Wiener process}
\label{sub:Wiener}

Consider a positive and self-adjoint operator $Q \in \mathcal{L}(U)$. 
Even if not  necessary, we assume  $Q$ to be injective; this
 allows  us to avoid formal complications. However Theorem 
\ref{th:integral-forrward=ito-brownian} below holds without
this restriction.

 Define $U_0:=Q^{1/2} (U)$: $U_0$ is a Hilbert space for the inner product
 $\left \langle x,y \right\rangle_{U_0}:= \left \langle Q^{-1/2} x,
 Q^{-1/2} y \right\rangle_U$ and, clearly $Q^{1/2} \colon U \to U_0$ is an
 isometry, see e.g. \cite{DaPratoZabczyk92} Section 4.3 
for details.
We remind that, given $A \in \mathcal{L}_2(U_0, H)$, we have
$\| A \|^2_{\mathcal{L}_2(U_0, H)}  = Tr \left ( A Q^{1/2}(A Q^{1/2})^*
  \right )$.

Let $\W_Q=\{\W_Q(t):s\leq t\leq T\}$ be an $U$-valued 
$\mathscr{F}^s_t$-$Q$-Wiener process with $\W_Q(s)=0$, $\mathbb{P}$ a.s.
The definition and properties of $Q$-Wiener processes are presented for
 instance in \cite{GawareckiMandrekar10} Chapter 2.1.
If $\Y$ is predictable 
 with some integrability properties,
$\int_s^t \Y(r) \ud \W(r)$ denotes 
 the classical It\^o-type  integral with respect to $\W$,
defined e.g. in \cite{DaPratoZabczyk92}.
 In the sequel  such an integral will be 
 shown to be equal to the forward integral
so that  the forward integral happens to be an extension of the It\^o integral.
 In the next subsection we introduce the It\^o integral
  $\int_s^t \Y(r) \ud \M(r)$ with respect to a local martingale $\M$.  
If $H = \R$, previous integral  
 will also denoted $\int_s^t \langle \Y(r), \ud \M(r) \rangle_U $. 


\begin{Definition}
\label{def:suitable}
We say that a sequence of $\mathscr{F}^s_t$-stopping times
 $\tau_n \colon \Omega\to [0,+\infty]$ is \emph{suitable} if, 
 denoted with $\Omega_n$ the 
set 
\[
\Omega_n := \left \{ \omega \in \Omega \; : \; \tau_n(\omega) >T \right \},
\]
we have
\[
\Omega_n \subseteq \Omega_{n+1} \qquad \text{a.s. for all n}
\]
and
\[
\bigcup_{n\in \mathbb{N}} \Omega_n = \Omega \qquad a.s.
\]
\end{Definition}
In the sequel, we will use the terminology  ``stopping times''
 without mentioning the underlying  filtration $(\mathscr{F}^s_t)$. 

\begin{Theorem}
\label{th:integral-forrward=ito-brownian}
Let $\X\colon [s,T] \times \Omega \to \mathcal{L}_2(U_0,H)$ 
be a 
predictable process satisfying
\begin{equation}
\label{eq:condizione-integr}
\int_s^T \left \| \X(r) \right \|^2_{\mathcal{L}_2(U_0,H)} \ud r
 < +\infty \qquad a.s.
\end{equation}
Then, the forward integral 
\[
\int_s^\cdot \X(r) \ud^- \W_Q(r).
\]
exists and coincides with the classical It\^o integral (defined for example in \cite{DaPratoZabczyk92} Chapter 4)
\[
\int_s^\cdot \X(r) \ud \W_Q(r).
\]
\end{Theorem}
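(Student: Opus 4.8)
The plan is to reduce the problem to the defining property of the It\^o integral with respect to a $Q$-Wiener process, namely that for a predictable, square-integrable integrand the It\^o integral $\M(t):=\int_s^t \X(r)\ud\W_Q(r)$ is a (local) martingale with a known quadratic variation structure, and then to compute the regularization limit $\lim_{\epsilon\to0^+}\int_s^t\X(r)\bigl(\frac{\W_Q(r+\epsilon)-\W_Q(r)}{\epsilon}\bigr)\ud r$ directly. First I would set up a localization: by (\ref{eq:condizione-integr}) there is a suitable sequence of stopping times $\tau_n$ (in the sense of Definition \ref{def:suitable}) such that the stopped integrands $\X I_{[s,\tau_n]}$ are in $L^2([s,T]\times\Omega;\mathcal L_2(U_0,H))$; on each event $\Omega_n$ the forward integral and the It\^o integral of $\X$ and of $\X I_{[s,\tau_n]}$ coincide pathwise up to time $T$, so it suffices to prove the identity under the global integrability assumption $\mathbb E\int_s^T\|\X(r)\|^2_{\mathcal L_2(U_0,H)}\ud r<\infty$. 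This is the standard device used everywhere in calculus via regularization to pass from $a.s.$-integrability to $L^2$ bounds.

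Next, under the $L^2$ assumption, the plan is to approximate $\X$ in $L^2([s,T]\times\Omega;\mathcal L_2(U_0,H))$ by elementary (simple) predictable processes $\X_k$ of the form $\X_k=\sum_j \Phi_j I_{(t_j,t_{j+1}]}$ with $\Phi_j$ bounded and $\mathscr F^s_{t_j}$-measurable. For such an elementary process the computation is explicit: the It\^o integral is $\sum_j\Phi_j(\W_Q(t_{j+1}\wedge t)-\W_Q(t_j\wedge t))$, and the $\epsilon$-regularization of the forward integral, $\int_s^t\X_k(r)\frac{\W_Q(r+\epsilon)-\W_Q(r)}{\epsilon}\ud r$, converges (uniformly in $t$, in probability) to exactly the same telescoping sum as $\epsilon\to0^+$ — this is a deterministic Riemann-sum type statement for each fixed $\omega$, using continuity of $\W_Q$ and the piecewise-constant structure of $\X_k$. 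Thus the theorem holds for elementary integrands. Then I would pass to the limit in $k$: the It\^o integrals $\int_s^\cdot\X_k\ud\W_Q$ converge ucp to $\int_s^\cdot\X\ud\W_Q$ by the It\^o isometry (Doob's inequality), and the key point is to control the forward-integral $\epsilon$-approximations uniformly in $k$ and $\epsilon$ so that the double limit (first $\epsilon\to0$, then $k\to\infty$) can be exchanged.

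The main obstacle is precisely this uniform control of the quantity $R^{\epsilon}_k(t):=\int_s^t(\X(r)-\X_k(r))\bigl(\frac{\W_Q(r+\epsilon)-\W_Q(r)}{\epsilon}\bigr)\ud r$, i.e. showing that $\sup_{\epsilon}\sup_{t\in[s,T]}|R^\epsilon_k(t)|\to0$ in probability as $k\to\infty$. The natural route is a Fubini/stochastic-Fubini argument: write $\frac{\W_Q(r+\epsilon)-\W_Q(r)}{\epsilon}=\frac1\epsilon\int_r^{r+\epsilon}\ud\W_Q(u)$, interchange the time integral and the stochastic integral to re-express $R^\epsilon_k(t)$ as a genuine It\^o integral $\int_s^{t}\bigl(\frac1\epsilon\int_{(u-\epsilon)\vee s}^{u\wedge t}(\X(r)-\X_k(r))\ud r\bigr)\ud\W_Q(u)$ of a smoothed integrand, and then apply the It\^o isometry together with the contraction property of the averaging operator on $L^2$ to bound $\mathbb E\sup_t|R^\epsilon_k(t)|^2$ by $C\,\mathbb E\int_s^T\|\X(r)-\X_k(r)\|^2_{\mathcal L_2(U_0,H)}\ud r$, uniformly in $\epsilon$. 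Justifying the stochastic Fubini theorem in this $\mathcal L_2(U_0,H)$-valued setting (for which one may cite \cite{DaPratoZabczyk92}) and checking the $L^2$-contraction of the shift-average are the technical heart; once they are in place, the theorem follows by the standard $\epsilon\to0$, then $k\to\infty$ diagonal argument, and the forward integral admits the continuous version $\int_s^\cdot\X\ud\W_Q$.
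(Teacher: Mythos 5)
Your proposal is correct, and its skeleton (localize via suitable stopping times so that $\mathbb E\int_s^T\|\X\|^2_{\mathcal L_2(U_0,H)}\ud r<\infty$, then rewrite the $\epsilon$-approximant as an It\^o integral of a smoothed integrand via the stochastic Fubini theorem, then conclude by the It\^o isometry) matches the paper's proof; the localization is done at the end there rather than at the start, which is immaterial. Where you genuinely diverge is in the mechanism that makes the smoothed integrand converge. The paper works with $\X$ itself: after Fubini it must show $\mathbb E\int_s^t\|\frac1\epsilon\int_{\theta-\epsilon}^{\theta}\X(r)\ud r-\X(\theta)\|^2\ud\theta\to0$, and for this it invokes the vector-valued Lebesgue differentiation theorem for the pointwise a.e.\ convergence together with Stein's maximal inequality to get a dominating function, then dominated convergence. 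You instead interpolate an elementary predictable process $\X_k$, for which the identity is a pathwise telescoping computation, and push all the analysis into the error term $R^\epsilon_k$, which after Fubini and the isometry is bounded uniformly in $\epsilon$ by $C\,\mathbb E\int_s^T\|\X-\X_k\|^2\ud r$ using only the Jensen/Cauchy--Schwarz contraction of the shift-average on $L^2$; a standard double-limit argument finishes. Your route buys a more elementary toolbox --- no maximal inequality and no differentiation theorem, only the $L^2$-contraction of averaging and the density of elementary processes in the space of predictable square-integrable $\mathcal L_2(U_0,H)$-valued integrands (which is part of the Da Prato--Zabczyk construction you cite) --- at the price of the extra bookkeeping of the exchange of limits in $k$ and $\epsilon$ and of the boundary terms appearing twice. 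The paper's route is shorter once the maximal inequality is granted and avoids introducing the approximating sequence altogether. Both are complete proofs.
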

\begin{proof} 
We fix $ t \in [s,T].$
In the proof we follow the arguments related to
 the finite-dimensional case, see Theorem 2
 of \cite{RussoVallois07}.
As a first step we consider $\X$ with
\begin{equation}
\label{eq:firststep}
 \mathbb{E} \left( \int_s^T \left \| \X(r) \right \|^2_{\mathcal{L}_2(U_0,H)} \ud
 r \right) <+\infty.
\end{equation}
This fact ensures that  the hypotheses in the stochastic Fubini Theorem 4.18 
  of \cite{DaPratoZabczyk92} 
are satisfied.
We have
\[
\int_s^t \X(r) \frac{\W_Q(r+\epsilon) - \W_Q(r)}{\epsilon} \ud r =
 \int_s^t \X(r) \frac{1}{\epsilon} \left ( \int_r^{r+\epsilon} \ud \W_Q(\theta) 
\right) \ud r;
\]
applying the stochastic Fubini Theorem, the expression above is equal to
\[
\int_{s}^t \left ( \frac{1}{\epsilon}\int_{\theta-\epsilon}^{\theta} \X(r) \ud r 
\right) \ud \W_Q(\theta) + R_\epsilon(t)
\]
where $R_\epsilon(t)$ is a boundary term that converges to $0$ in probability,
 for any $t \in [s,T]$,
 so that we can ignore it. We can apply now the maximal inequality stated
 in \cite{Stein70}, Theorem 1: there exists a universal constant $C>0$
 such that, for every $f\in L^2([s,t];\mathbb{R})$,
\begin{equation}
\label{eq:Este}
\int_s^t \left ( \sup_{\epsilon \in (0,1]} \left \vert \frac{1}{\epsilon}
\int_{(r-\epsilon)
}^r f(\xi)  \ud \xi \right \vert \right )^2 \ud r \leq
  C \int_s^t f^2(r) \ud r.
\end{equation}
According to the vector valued version of the Lebesgue differentiation 
Theorem (see Theorem II.2.9 in \cite{DiestelUhl77}), the following quantity
\[
\frac{1}{\epsilon}\int_{(r-\epsilon)
}^r \X(\xi) \ud \xi
\]
converges $\ud \mathbb{P} \otimes \ud r$ a.e. to $\X(r)$. Consequently
 (\ref{eq:Este}) and dominated convergence theorem imply
\[
\mathbb{E} \int_s^t \left \| \left ( \frac{1}{\epsilon}
\int_{\theta-\epsilon}^{\theta} \X(r) \ud r \right ) - \X(\theta) 
\right \|_{\mathcal{L}_2(U_0,H)}^2 \ud \theta \xrightarrow{\epsilon\to 0} 0. 
\]

Finally, the convergence 
\begin{equation}
\label{eq:conv-first-step}
J_\epsilon:= \int_{s}^t \left ( \frac{1}{\epsilon}\int_{\theta-\epsilon}^{\theta} \X(r) \ud r \right ) \ud \W_Q(\theta)\\
\xrightarrow[L^2(\Omega,H)]{\epsilon\to 0} J:= \int_{s}^t \X(\theta) 
\ud \W_Q(\theta),
\end{equation}
justifies the claim.

If (\ref{eq:firststep}) is not satisfied we proceed
by localization. Denote again by  $J_\epsilon$ and $J$ the processes 
defined in (\ref{eq:conv-first-step}).  Call $\tau_n$ the  stopping times
 given by
\[
\tau_n:= \inf \left \{ t \in [s,T] \; : \; \int_s^t \left \| \X(r) \right 
\|^2_{\mathcal{L}_2(U_0,H)} \ud r  \geq n \right \}
\]
(and $+\infty$ if the set is void) and call $\Omega_n$ the sets
\[
\Omega_n := \left \{ \omega \in \Omega \; : \; \tau_n(\omega) > T \right \}.
\]
It is easy to see that the stopping times $\tau_n$ are suitable in the
 sense of Definition \ref{def:suitable}.

For each fixed $n$, the process $I_{[s, \tau_n]} \X$ verifies (\ref{eq:firststep}) and 
 from the first step
\[
J_n^\epsilon:= \int_{s}^t \left ( \frac{1}{\epsilon}\int_{\theta-\epsilon}^{\theta}
 I_{[s, \tau_n]}(r) \X(r) \ud r \right ) \ud \W(\theta)
\xrightarrow[L^2(\Omega,H)]{\epsilon\to 0} J_n:=
 \int_{s}^t I_{[s, \tau_n]}(\theta) \X(\theta) \ud \W(\theta).
\]
So
\[
I_{\Omega_n} J_\epsilon = I_{\Omega_n} J_n^\epsilon \xrightarrow
[L^2(\Omega,H)]{\epsilon\to 0} 
I_{\Omega_n} J_n = I_{\Omega_n} J.
\]
Consequently, for all $n$, $I_{\Omega_n} J_\epsilon$ converges to $I_{\Omega_n} J$ 
in probability and finally
$J_\epsilon$ converges to $J$ in probability as well.
 This fact concludes the proof.
\end{proof}

\subsection{The semimartingale case}

Consider now the case when the integrator is a more general local martingale. 
Let  $H$ and $U$ be again two separable Hilbert spaces; we adopt the notations
 introduced in Section \ref{sec:preliminaries}.

An $U$-valued measurable process $\M \colon [s,T] \times \Omega \to U$  is called martingale if, for all $t\in [s,T]$, $\M$ is $\mathscr{F}^s_t$-adapted
with $\mathbb{E} \left [ |\M(t)| \right ] <+\infty$ and $\mathbb{E}\left [ \M(t_2)  |\mathscr{F}^s_{t_1} \right ] = \M(t_1)$ for all $s\leq t_1 \leq t_2 \leq T$.
The concept of (conditional) expectation for $B$-valued processes, for a 
separable Banach space $B$, is recalled for instance in 
\cite{DaPratoZabczyk92} Section 1.3. 
All the considered martingales will be continuous.

We denote  by  $\mathcal{M}^2(s,T; H)$ the linear space of square integrable
 martingales equipped with the norm 
\[
|\M|_{\mathcal{M}^2(s,T; U)} := \left ( \mathbb{E} \sup_{t\in [s,T]} |\M(t)|^2
 \right )^{1/2}.
\]
It is a Banach space as stated in \cite{DaPratoZabczyk92},
 Proposition 3.9.

An $U$-valued measurable process $\M \colon [s,T] \times \Omega \to U$ 
 is called local martingale if there exists a non-decreasing sequence of stopping times
 $\tau_n\colon \Omega \to [s,T]\cup\{+\infty\}$ such that $\M(t\wedge \tau_n)$ for $t\in[s,T]$ is a martingale and $\mathbb{P} \left [ \lim_{n\to\infty} \tau_n = +\infty \right ]=1$. All the considered local martingales are continuous.

Given a continuous local martingale $\M \colon [s,T] \times \Omega \to U$,
the process $|\M|^2$ is a real local sub-martingale, see
Theorem 2.11 in \cite{KrylovRozovskii07}. 
The increasing predictable process, vanishing at zero, appearing in the 
 Doob-Meyer decomposition of $|\M|^2$
will be denoted by
 $[\M]^{\mathbb{R}, cl}(t), t\in [s,T]$.
It  is of course uniquely determined and continuous.

We remind some properties of the It\^o stochastic integral with respect 
to a local martingale $\M$. 
Call $\mathcal{I}_\M(s,T;H)$ the set
 of the processes $\X\colon [s,T]\times \Omega \to \mathcal{L}(U;H)$ that are 
 strongly measurable from $([s,T]\times \Omega, \mathscr{P})$ to $\mathcal{L}(U;H)$ and such that
\[
|\X|_{\mathcal{I}_\M(s,T;H)} := \left (\mathbb{E} \int_s^T \| \X(r) \|_{\mathcal{L}(U;H)}^2 \ud [\M]^{\mathbb{R}, cl}(r) \right )^{1/2} < +\infty.
\]
$\mathcal{I}_\M(s,T;H)$ endowed with the norm $|\cdot|_{\mathcal{I}_\M(s,T;H)}$
 is a Banach space.

The linear map
\[
\left \{
\begin{array}{l}
I\colon \mathcal{I}_\M(s,T;H) \to \mathcal{M}^2(s,T; H)\\
\X \mapsto \int_s^{\cdot} \X(r) \ud \M(r)
\end{array}
\right .
\]
is a contraction, see e.g. \cite{Metivier82} Section 20.4 (above Theorem 20.5). 
As illustrated in  \cite{KrylovRozovskii07} Section 2.2 (above Theorem 2.14),
the stochastic integral w.r.t. $\M$ extends to the integrands
  $\X$ which are 
measurable from $([s,T]\times \Omega, \mathscr{P})$ to $\mathcal{L}(U;H)$
 and such that
\begin{equation} \label{EChainRule}
\int_s^T \| \X(r) \|_{\mathcal{L}(U;H)}^2 \ud [\M]^{\mathbb{R}, cl}(r) < +\infty \qquad a.s.
\end{equation}
We denote by $\mathcal{J}^2(s,T; U,H)$ such a 
family of integrands w.r.t. $\M$.
 Actually, the integral can be even defined for a wider class of integrands, 
see e.g. \cite{MetivierPellaumail80}. For instance, according
to Section 4.7 of \cite{DaPratoZabczyk92},
  let 
\begin{equation} \label{AA}
 \M _t = \int_s^t  \A (r) \ud \W_Q(r), t \in [s,T],
\end{equation}
and $\A$ be an $\mathcal{L}(U,H)$-valued predictable process
such that $\int_s^T {\rm Tr} [\A(r) Q^{1/2} (\A(r) Q^{1/2})^*]  \ud r 
< \infty$ a.s.

If $\X$ is an $H$-valued (or $H^*$-valued using Riesz identification)
predictable process such that 
\begin{equation} \label{EChainRule11}
  \int_s^T \langle \X (r), \A(r) Q^{1/2} (\A  Q^{1/2})^* \X(r) \rangle_H \ud r 
< \infty, \,\, {\rm a.s.},
\end{equation}
then, as argued in Section 4.7 of   \cite{DaPratoZabczyk92},
\begin{equation} \label{AA1}
 N (t) = \int_s^t \langle \X (r), \ud \M(r) \rangle_H, t \in [s,T],
\end{equation}
is well-defined  and it equals 
$  N (t) = \int_s^t \langle \X (r), \A(r) \ud \W_Q(r) \rangle_H$ for 
$t \in [s,T]$.

We recall in the following proposition some significant properties of the stochastic integral with respect to local martingales.
\begin{Proposition} \label{PChainRule}
Let $\M$ be a continuous  $(\mathcal{F}_t^s)$-local martingale,
$\X$ verifying \eqref{EChainRule}. We set $\N(t) = \int_s^t \X(r) d\M(r)$.
\begin{enumerate}
\item[(i)] $\N$ is an  $(\mathcal{F}_t^s)$-local martingale.
\item[(ii)] Let $\K$ be an   $(\mathcal{F}_t^s)$-
predictable process
such that $\K \X$ fulfills \eqref{EChainRule}.
Then the It\^o-type stochastic integral 
$\int_s^t \K d \N$ for $t \in [s,T]$ is well-defined
and it equals $\int_s^t \K \X d \M$.
\item[(iii)] If $\M$ is a $Q$-Wiener process $W_Q$,
then, whenever $\X$ is such that 
\begin{equation} \label{EAAA}
  \int_s^T Tr \left [ \left ( \X(s) Q^{1/2} \right ) \left (\X(s) Q^{1/2} \right)^* \right ] \ud s < +\infty \ {\rm a.s.},
\end{equation}
then $\N(t) = \int_s^t \X(s) d\W_Q(s)$ is a local martingale
and 
\[
[N]^{\R, cl}(t) = \int_s^t \left ( \X(s) Q^{1/2} \right ) \left (\X(s) Q^{1/2} \right)^* \ud s.
\]
\item[(iv)] If  in item (iii), the expectation of the
quantity \eqref{EAAA} is finite, 
then $\N(t) = \int_s^t \X(s) d\W_Q(s)$ is square integrable continuous martingale. 
\item[(v)] If $\M$ is defined as in \eqref{AA} and $\X$ 
fulfills \eqref{EChainRule11}, then $\M$ is a real local martingale.
If moreover,  the expectation of \eqref{EChainRule11} is finite, then 
$N$, defined in (\ref{AA1}), is a square integrable martingale.
\end{enumerate}
\end{Proposition}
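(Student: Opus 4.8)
The plan is to establish the five items one at a time; each is a variant of a standard fact about stochastic integration against Hilbert‑valued (local) martingales (cf.\ \cite{DaPratoZabczyk92}, \cite{Metivier82}, \cite{KrylovRozovskii07}), the work being to phrase them as stated and to run the appropriate localizations. For (i): recall from the construction preceding the statement that on $\mathcal{I}_\M(s,T;H)$ the map $\X\mapsto\int_s^\cdot\X\,\ud\M$ is a contraction into $\mathcal{M}^2(s,T;H)$, so $\N\in\mathcal{M}^2(s,T;H)$ there. For a general $\X$ satisfying \eqref{EChainRule}, set $\sigma_n:=\inf\{t\in[s,T]:\int_s^t\|\X(r)\|_{\mathcal{L}(U;H)}^2\,\ud[\M]^{\R,cl}(r)\ge n\}$ (and $+\infty$ if the set is empty); these are suitable stopping times in the sense of Definition \ref{def:suitable}, $I_{[s,\sigma_n]}\X\in\mathcal{I}_\M(s,T;H)$, the stochastic integral is consistent under this truncation, and $\bigcup_n\{\sigma_n>T\}=\Omega$ a.s., so $\N$ is a continuous local martingale.

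For (ii), the ``chain rule'', I would first verify the identity for elementary integrands $\K=\zeta\,I_{(a,b]}$ with $\zeta$ bounded and $\mathscr{F}^s_a$‑measurable, where $\int_s^t\K\,\ud\N=\zeta(\N(t\wedge b)-\N(t\wedge a))=\zeta\int_{t\wedge a}^{t\wedge b}\X\,\ud\M=\int_s^t\K\X\,\ud\M$ by additivity of the integral over disjoint intervals, and then extend to simple processes by linearity. The general case follows by approximation, using the identity $\ud[\N]^{\R,cl}(t)=\|\X(t)\|_{\mathcal{L}(U;H)}^2\,\ud[\M]^{\R,cl}(t)$ obtained as in (iii) below: this is precisely what reconciles the hypothesis ``$\K\X$ fulfils \eqref{EChainRule}'' with the integrability needed to define $\int_s^\cdot\K\,\ud\N$, and it lets one pass to the limit via the contraction property together with a further localization as in (i).

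For (iii)--(iv): when $\M=\W_Q$, hypothesis \eqref{EAAA} is exactly $\int_s^T\|\X(r)\|_{\mathcal{L}_2(U_0,H)}^2\,\ud r<+\infty$ a.s., i.e.\ condition \eqref{eq:condizione-integr}, so $\N=\int_s^\cdot\X\,\ud\W_Q$ is well defined and is a local martingale by (i). To identify its quadratic variation I would use the energy (It\^o) identity for $|\cdot|^2$ from \cite{DaPratoZabczyk92}, which gives that $|\N(t)|^2$ minus $\int_s^t\mathrm{Tr}\!\left[(\X(r)Q^{1/2})(\X(r)Q^{1/2})^*\right]\ud r=\int_s^t\|\X(r)\|_{\mathcal{L}_2(U_0,H)}^2\,\ud r$ is a local martingale; since the subtracted process is continuous, increasing, predictable and vanishes at $s$, uniqueness of the Doob--Meyer decomposition yields the stated formula. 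For (iv), if moreover the expectation of \eqref{EAAA} is finite, the It\^o isometry $\mathbb{E}|\N(t)|^2=\mathbb{E}\int_s^t\|\X(r)\|_{\mathcal{L}_2(U_0,H)}^2\,\ud r$ and Doob's inequality give $\N\in\mathcal{M}^2(s,T;H)$. For (v): with $\M=\int_s^\cdot\A\,\ud\W_Q$ as in \eqref{AA} and $N$ as in \eqref{AA1}, one has $N(t)=\int_s^t\langle\X(r),\A(r)\,\ud\W_Q(r)\rangle_H$, which is a real‑valued integral against $\W_Q$ whose (Hilbert--Schmidt) integrand has squared norm $\langle\X(r),\A(r)Q^{1/2}(\A(r)Q^{1/2})^*\X(r)\rangle_H$; thus \eqref{EChainRule11} is precisely condition \eqref{eq:condizione-integr} with $H=\mathbb{R}$, so by the scalar instance of (i) (equivalently Theorem \ref{th:integral-forrward=ito-brownian}) $N$ is a real local martingale, and finiteness of the expectation in \eqref{EChainRule11} gives $N\in\mathcal{M}^2(s,T;\mathbb{R})$ exactly as in (iv).

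Throughout one has to keep track of the strong measurability of the integrands, which matters because $\mathcal{L}(U;H)$ is in general non‑separable; this is preserved under the truncations and elementary‑process approximations above. The one genuinely delicate step is item (ii): verifying that the a.s.\ integrability assumed on $\K\X$ indeed makes $\int_s^\cdot\K\,\ud\N$ meaningful, and justifying the limiting procedure, really depends on the explicit form of $[\N]^{\R,cl}$ and on a careful double localization; the other items are essentially bookkeeping layered on top of the cited construction of the integral.
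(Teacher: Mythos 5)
The paper does not actually prove this proposition: its ``proof'' consists of four citations (Krylov--Rozovskii Theorem 2.14 for (i), M\'etivier--Pellaumail Proposition 2.2 for (ii), Da Prato--Zabczyk Theorem 4.12 for (iii)--(iv), and the discussion preceding the statement for (v)). Your proposal reconstructs the arguments behind those citations, and for items (i), (iii), (iv) and (v) your route is exactly the standard one: localization by the entry times of $\int_s^\cdot\|\X\|^2\,\ud[\M]^{\R,cl}$ plus the contraction property for (i); the identification $\mathrm{Tr}\bigl[(\X Q^{1/2})(\X Q^{1/2})^*\bigr]=\|\X\|^2_{\mathcal{L}_2(U_0,H)}$ together with the energy identity and uniqueness in Doob--Meyer for (iii); the isometry and Doob's inequality for (iv); and the observation that \eqref{EChainRule11} is the scalar instance of \eqref{eq:condizione-integr} for (v). Nothing to object to there.

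Item (ii), however, contains a genuine inaccuracy. The identity you invoke, $\ud[\N]^{\R,cl}(t)=\|\X(t)\|^2_{\mathcal{L}(U;H)}\,\ud[\M]^{\R,cl}(t)$, is false in general: since $[\N]^{\R,cl}$ is the compensator of $|\N|^2$, one has $\ud[\N]^{\R,cl}=\mathrm{Tr}\bigl[\X\,\widehat{Q}_\M\,\X^*\bigr]\,\ud[\M]^{\R,cl}$, where $\widehat{Q}_\M$ is the (trace-one) operator density of the tensor quadratic variation of $\M$ with respect to $[\M]^{\R,cl}$; only the inequality $\ud[\N]^{\R,cl}\le\|\X\|^2_{\mathcal{L}(U;H)}\,\ud[\M]^{\R,cl}$ survives. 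This matters precisely at the point you single out as delicate: the hypothesis $\int_s^T\|\K\X\|^2\,\ud[\M]^{\R,cl}<\infty$ does not dominate $\int_s^T\|\K\|^2\,\ud[\N]^{\R,cl}$ through your claimed identity, because the operator norm is submultiplicative in the wrong direction for that. The reconciliation in M\'etivier--Pellaumail goes through the trace form of the integrability condition: $\int_s^T\mathrm{Tr}\bigl[\K\,\widehat{Q}_\N\,\K^*\bigr]\,\ud[\N]^{\R,cl}=\int_s^T\mathrm{Tr}\bigl[(\K\X)\,\widehat{Q}_\M\,(\K\X)^*\bigr]\,\ud[\M]^{\R,cl}\le\int_s^T\|\K\X\|^2\,\ud[\M]^{\R,cl}$, which is exactly what the hypothesis on $\K\X$ controls. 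With that substitution your elementary-integrand-plus-localization scheme closes; as written, the limiting step in (ii) does not.
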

\begin{proof}
For (i) see \cite{KrylovRozovskii07} Theorem 2.14 page 14-15. For (ii) see \cite{MetivierPellaumail80}, proof of Proposition 2.2 Section 2.4. (iii) and (iv) are contained in \cite{DaPratoZabczyk92} Theorem 4.12 Section 4.4.
(v) is a consequence of (iii) and (iv) and of the considerations
before the statement of Proposition \ref{PChainRule}.
\end{proof}

\begin{Theorem}
\label{th:integral-forrward=ito-martingale}
Let us consider a continuous local martingale $\M\colon [s,T]\times{\Omega} 
\to U$ and a c\`agl\`ad process predictable $\mathcal{L}(U,H)$-valued process.
Then, the forward integral 
\[
\int_s^\cdot \X(r) \ud^- \M(r),
\]
defined in Definition \ref{def:forward-integral} exists and coincides with the It\^o integral 
\[
\int_s^\cdot \X(r) \ud \M(r).
\]
\end{Theorem}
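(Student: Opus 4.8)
The plan is to mimic the strategy of Theorem~\ref{th:integral-forrward=ito-brownian}, replacing the $Q$-Wiener process by the local martingale $\M$ and the stochastic Fubini theorem of \cite{DaPratoZabczyk92} Chapter~4 by its counterpart for integrals against local martingales. Fix $t \in [s,T]$. First I would reduce to the case where $\X$ satisfies the strong integrability condition $\mathbb{E}\int_s^T \|\X(r)\|^2_{\mathcal{L}(U;H)}\,\ud[\M]^{\mathbb{R},cl}(r) < +\infty$ together with a bound on $\mathbb{E}\sup_{t}|\M(t)|^2$; this is done by the same localization device used in that proof, introducing the suitable stopping times
\[
\tau_n := \inf\Big\{ t\in[s,T] \; : \; \int_s^t \|\X(r)\|^2_{\mathcal{L}(U;H)}\,\ud[\M]^{\mathbb{R},cl}(r) \geq n \;\; \text{or}\;\; |\M(t)|\geq n \Big\}
\]
(and $+\infty$ if the set is void), setting $\Omega_n := \{\tau_n > T\}$, and using that multiplying by $I_{\Omega_n}$ commutes with the $\epsilon$-regularization and with the It\^o integral; convergence in probability on each $\Omega_n$ then yields convergence in probability on $\Omega$ because $\bigcup_n\Omega_n = \Omega$ a.s.

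In the integrable case, I would write, exactly as before,
\[
\int_s^t \X(r)\,\frac{\M(r+\epsilon)-\M(r)}{\epsilon}\,\ud r
= \int_s^t \X(r)\,\frac{1}{\epsilon}\Big(\int_r^{r+\epsilon} \ud\M(\theta)\Big)\,\ud r,
\]
and apply the stochastic Fubini theorem for local martingales (e.g. \cite{Metivier82} or \cite{DaPratoZabczyk92}) to interchange the $\ud r$ and $\ud\M(\theta)$ integrations, obtaining
\[
\int_s^t \Big(\frac{1}{\epsilon}\int_{\theta-\epsilon}^{\theta}\X(r)\,\ud r\Big)\,\ud\M(\theta) + R_\epsilon(t),
\]
with $R_\epsilon(t)$ a boundary term converging to $0$ ucp. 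The core of the argument is then to show that $\frac{1}{\epsilon}\int_{\theta-\epsilon}^{\theta}\X(r)\,\ud r \to \X(\theta)$ in the norm $\mathcal{L}_2$ of $L^2(\Omega\times[s,T], \ud\mathbb{P}\otimes\ud[\M]^{\mathbb{R},cl})$; pointwise $\ud\mathbb{P}\otimes\ud[\M]^{\mathbb{R},cl}$-a.e. convergence follows from the vector-valued Lebesgue differentiation theorem (\cite{DiestelUhl77} Theorem~II.2.9), and the passage to $L^2$ convergence is obtained by dominating with Stein's maximal inequality \eqref{eq:Este}, applied to $f = \|\X\|$, exactly as in the Brownian case. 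Finally the It\^o isometry for integrals against $\M$ (the contraction property of $I\colon \mathcal{I}_\M(s,T;H)\to\mathcal{M}^2(s,T;H)$) transfers this $L^2$ convergence of integrands into $L^2(\Omega,H)$ convergence of the stochastic integrals, identifying the limit with $\int_s^t \X(\theta)\,\ud\M(\theta)$, which proves the claim for this $t$; since the It\^o integral has a continuous version and the forward integral inherits it, we conclude.

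The main obstacle I anticipate is making sure the maximal-inequality/domination step goes through with $\ud[\M]^{\mathbb{R},cl}$ in place of Lebesgue measure $\ud r$. Stein's inequality \eqref{eq:Este} is stated for the Lebesgue average $\frac1\epsilon\int_{r-\epsilon}^r$, so one must be careful that the relevant function space is $L^2$ with respect to the random (but, crucially, fixed after localization and $\omega$-wise absolutely continuous only if $\M$ is of that form) measure $\ud[\M]^{\mathbb{R},cl}$; the clean route is to keep the $\ud r$ average (it is the regularization that is defined via Lebesgue integration in $r$) and use that the target convergence is in the $\mathcal{L}(U;H)$-norm integrated against $\ud\mathbb{P}\otimes\ud[\M]^{\mathbb{R},cl}$, invoking the Lebesgue differentiation theorem pathwise in $r$ and then Stein plus dominated convergence after integrating in $[\M]^{\mathbb{R},cl}$ and $\mathbb{P}$. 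The c\`agl\`ad hypothesis on $\X$ guarantees strong measurability and also that $\X(\theta^-) = \X(\theta)$ for a.e. $\theta$, which is what the differentiation theorem delivers, so no mismatch arises at this point; the remaining details are routine adaptations of the proof of Theorem~\ref{th:integral-forrward=ito-brownian}.
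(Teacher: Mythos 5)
Your overall architecture (localize, stochastic Fubini, use the contraction property of the It\^o integral, reduce to $L^2$-convergence of the averaged integrands) matches the paper's proof. But the key convergence step has a genuine gap. You want to show
\begin{equation*}
\mathbb{E} \int_s^t \Big \| \frac{1}{\epsilon}\int_{\xi-\epsilon}^{\xi} \X(r) \ud r  - \X(\xi) \Big \|^2_{\mathcal{L}(U,H)} \ud [\M]^{\mathbb{R}, cl}(\xi) \to 0,
\end{equation*}
and you propose to dominate via Stein's maximal inequality and to get pointwise convergence from the vector-valued Lebesgue differentiation theorem. Both tools are calibrated to Lebesgue measure: Stein controls $\sup_\epsilon$ of the averages in $L^2(\ud r)$, not in $L^2(\ud [\M]^{\mathbb{R},cl})$, and the differentiation theorem gives convergence only $\ud r$-a.e. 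Since $[\M]^{\mathbb{R},cl}$ need not be absolutely continuous with respect to Lebesgue measure, an exceptional Lebesgue-null set can carry positive $\ud[\M]^{\mathbb{R},cl}$-mass, and the maximal-function bound gives you nothing for the dominating step. Your closing paragraph senses this obstacle but the proposed resolution (``Stein plus dominated convergence after integrating in $[\M]^{\mathbb{R},cl}$'') does not remove it. A second, related issue is your choice of localization: stopping when $\int_s^t \|\X\|^2 \ud[\M]^{\mathbb{R},cl} \geq n$ does not make $\X$ pointwise bounded, so you have no integrable dominating function anyway.

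The paper's route exploits the c\`agl\`ad hypothesis much more directly and avoids both tools entirely. A c\`agl\`ad adapted process is a.s.\ pathwise bounded (Remark \ref{Radated}), so one localizes with
\begin{equation*}
\tau_n := \inf \Big\{ t \in [s,T] \; : \; \| \X(t) \|^2_{\mathcal{L}(U,H)} + [\M]^{\mathbb{R}, cl}(t)  \geq n \Big\},
\end{equation*}
after which $\|\X\|$ and $[\M]^{\mathbb{R},cl}$ are both uniformly bounded by $n$. Left-continuity then gives $\frac{1}{\epsilon}\int_{\xi-\epsilon}^{\xi} \X(r)\ud r \to \X(\xi)$ at \emph{every} $\xi$ (not merely Lebesgue-a.e.), and the trivial bound $\| \cdot \|^2 \leq 4n$ together with $[\M]^{\mathbb{R},cl}(T)\leq n$ lets dominated convergence finish the argument. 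If you adjust your stopping times to include the pointwise norm of $\X$ and replace the differentiation-theorem/Stein pair by this left-continuity-plus-boundedness argument, your proof closes.
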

\begin{Remark} \label{Radated} 
Any c\`agl\`ad 
adapted process is a.s.
bounded and therefore it belongs to ${J}^2(s,T; U,H)$.
\end{Remark}
\begin{proof}[Proof of Theorem \ref{th:integral-forrward=ito-martingale}]

Without loss of generality (replacing if necessary $\X$ with $(\X - \X(s))$) we can suppose that $\X(s) =0$.

The proof follows partially the lines of Theorem 
\ref{th:integral-forrward=ito-brownian}. Similarly we first localize the
 problem using the suitable sequence of stopping times defined by
\[
\tau_n:= \inf \left \{ t \in [s,T] \; : \; \left \| \X(t) \right \|^2_{\mathcal{L}(U,H)} + [\M]^{\mathbb{R}, cl}(t)  \geq n \right \}
\]
(and $+\infty$ if the set is void); the localized process belongs 
to $\mathcal{I}_\M(s,T;H)$ and satisfies the hypotheses of the stochastic
 Fubini theorem in the form given in \cite{Leon90}. Since the integral is
a contraction from $\mathcal{I}_\M(s,T;H)$ to $\mathcal{M}^2(s,T; H)$, it
 only remains to show that
\begin{equation}
\label{eq:EIM}
\mathbb{E} \int_s^t \left \| \left ( \frac{1}{\epsilon}\int_{\theta-\epsilon}^{\theta} \X(r) \ud r \right ) -  \X(\xi) \right \|^2_{\mathcal{L}(U,H) } \ud [\M]^{\mathbb{R}, cl} (\xi) \; \xrightarrow{\epsilon\to 0} 0, 
\end{equation}
when $\X$ belongs to $\mathcal{I}_\M(s,T;H)$. (\ref{eq:EIM}) holds, taking into account  the Lebesgue dominated convergence theorem, because $\X$ is left continuous and
both  $\X$ and $[M]^{\mathbb{R}, cl}$ are bounded.
\end{proof}

An easier but still important statement  concerns the integration 
with respect bounded variation processes.
\begin{Proposition} \label{ItoBV}
 Let us consider a continuous 
bounded variation process $\V \colon [s,T]\times{\Omega} 
\to U$
and  let $\X$ be a c\`agl\`ad measurable  process 
$[s,T] \times \Omega \rightarrow 
{\mathcal L}(U,H)$. 
Then the forward integral 
\[
\int_s^\cdot \X(r) \ud^- \V(r),
\]
defined in Definition \ref{def:forward-integral} exists and 
coincides with the Lebesgue-Bochner integral 
\[
\int_s^\cdot \X(r) \ud \V(r).
\]
\end{Proposition}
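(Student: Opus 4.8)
The plan is to mimic the argument used in the proof of Theorem~\ref{th:integral-forrward=ito-martingale}, but with the Doob--Meyer bracket $[\M]^{\mathbb R,cl}$ replaced by the total variation measure of $\V$, which is substantially simpler because no stochastic Fubini theorem is needed: the relevant Fubini step is now the classical (deterministic, pathwise) Fubini--Tonelli theorem for the Lebesgue--Bochner integral against the finite signed (vector) measure $\ud \V$. First I would reduce, as usual, to $\X(s)=0$ by replacing $\X$ with $\X-\X(s)$, since $\int_s^\cdot \X(s)\,\ud^-\V(r)=\X(s)(\V(\cdot)-\V(s))=\int_s^\cdot \X(s)\,\ud\V(r)$ by the very definition of the forward integral and of the Lebesgue--Bochner integral against a bounded variation function. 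Then I would localize by a sequence of stopping times, e.g. $\tau_n:=\inf\{t\in[s,T]:\|\X(t)\|_{\mathcal L(U,H)}+\mathrm{Var}_{[s,t]}(\V)\ge n\}$ (and $+\infty$ if the set is void); these are suitable in the sense of Definition~\ref{def:suitable}, so it suffices to prove the identity for $I_{[s,\tau_n]}\X$ and $\V^{\tau_n}$, i.e. we may assume $\X$ bounded and $\V$ of bounded total variation by a deterministic (pathwise) constant.

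Next, for fixed $\omega$ and fixed $t\in[s,T]$, I would write, exactly as before,
\[
\int_s^t \X(r)\,\frac{\V(r+\epsilon)-\V(r)}{\epsilon}\,\ud r
=\int_s^t \X(r)\,\frac1\epsilon\Bigl(\int_r^{r+\epsilon}\ud\V(\theta)\Bigr)\,\ud r,
\]
and apply the pathwise Fubini theorem for the $U$-valued finite variation measure $\ud\V$ on $[s,T+1]$ (using Notation~\ref{Not00} for the extension of $\V$ beyond $T$) to rewrite this as
\[
\int_s^{t}\Bigl(\frac1\epsilon\int_{\theta-\epsilon}^{\theta}\X(r)\,\ud r\Bigr)\,\ud\V(\theta)+R_\epsilon(t),
\]
where $R_\epsilon(t)$ is a boundary term supported on $[t,t+\epsilon]$ which, by boundedness of $\X$ and continuity of $\V$, tends to $0$ uniformly in $t$ as $\epsilon\to0$ (hence ucp). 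Then $\frac1\epsilon\int_{\theta-\epsilon}^{\theta}\X(r)\,\ud r\to\X(\theta)$ for every $\theta$ at which $\X$ is left continuous --- which holds for $\ud|\V|$-a.e.\ $\theta$ once we note that left continuity of $\X$ holds everywhere by hypothesis, so in fact the convergence holds at every $\theta\in(s,T]$. Since $\|\frac1\epsilon\int_{\theta-\epsilon}^\theta\X(r)\,\ud r\|_{\mathcal L(U,H)}$ is bounded by $\sup_r\|\X(r)\|_{\mathcal L(U,H)}<\infty$ (pathwise), the Lebesgue dominated convergence theorem with respect to the finite measure $\ud|\V|(\theta)$ gives
\[
\int_s^t\Bigl\|\Bigl(\frac1\epsilon\int_{\theta-\epsilon}^{\theta}\X(r)\,\ud r\Bigr)-\X(\theta)\Bigr\|_{\mathcal L(U,H)}\,\ud|\V|(\theta)\xrightarrow{\epsilon\to0}0,
\]
whence $\int_s^t(\frac1\epsilon\int_{\theta-\epsilon}^\theta\X(r)\,\ud r)\,\ud\V(\theta)\to\int_s^t\X(\theta)\,\ud\V(\theta)$ in $H$ for that $\omega$. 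Combining with $R_\epsilon(t)\to0$ yields pathwise (hence a.s., hence in probability) convergence of the $\epsilon$-approximations to $\int_s^t\X(r)\,\ud\V(r)$; since $t\mapsto\int_s^t\X(r)\,\ud\V(r)$ is continuous, this identifies the forward integral with the Lebesgue--Bochner integral on $\Omega_n:=\{\tau_n>T\}$, and letting $n\to\infty$ (using $\bigcup_n\Omega_n=\Omega$ a.s.) concludes.

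I do not expect any serious obstacle here: the statement is essentially a pathwise, $\omega$-by-$\omega$ fact, and the only mildly technical point is justifying the Fubini interchange for a $U$-valued (Banach-valued) finite variation integrator --- but $\X$ is bounded and strongly measurable and $\V$ has finite variation, so the interchange is covered by the standard Bochner--Fubini theorem applied to the product of Lebesgue measure on $[s,t]$ and the scalar total variation measure $|\V|$, keeping track of the $\mathcal L(U,H)$-action on the $U$-valued density of $\ud\V$ with respect to $\ud|\V|$. One should also remark that the left continuity of $\X$ is used only to guarantee the pointwise convergence $\frac1\epsilon\int_{\theta-\epsilon}^\theta\X\to\X$; measurability and pathwise boundedness (after localization) are all that is needed for the dominating function.
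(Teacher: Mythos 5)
Your proposal is correct and follows essentially the same route as the paper, which simply says the proof is analogous to that of Theorem \ref{th:integral-forrward=ito-martingale} and proceeds via the Fubini theorem; you have merely filled in the (pathwise, deterministic) details. The only cosmetic remark is that the stopping-time localization is superfluous here (and $\tau_n$ need not be a stopping time since $\X$ is not assumed adapted): a c\`agl\`ad path is already bounded on $[s,T]$ and $\V$ has pathwise finite variation, so your dominated-convergence argument can be run directly $\omega$ by $\omega$.
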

\begin{proof} 
The proof is similar to the one of Theorem
\ref{th:integral-forrward=ito-martingale}; one  proceeds
via Fubini theorem.
\end{proof}

\section{$\chi$-quadratic variation and $\chi$-Dirichlet processes}
\label{SecTensor}

\subsection{$\chi$-quadratic variation processes}
\label{sub4.2}

Denote by  $\mathscr{C}([s,T])$ the space of the real continuous processes equipped with the ucp (uniform convergence in probability) topology.
Consider two real Banach spaces $B_1$ and $B_2$
with the same notations as in Section \ref{sec:preliminaries}. 

 Following \cite{DiGirolamiRusso09,
 DiGirolamiRusso11} a {\bf Chi-subspace} (of $(B_1\hat\otimes_\pi B_2)^*$) is
 defined as any Banach subspace $(\chi, |\cdot|_\chi)$
which is continuously embedded into $(B_1\hat\otimes_\pi B_2)^*$:
in other words, there is some constant $C$ such that 
\[
|\cdot|_{(B_1\hat\otimes_\pi B_2)^*} \leq C |\cdot|_\chi.
\]

\begin{Lemma}
\label{lm:era52}
Let us consider a Banach space $\nu_1$ [resp. $\nu_2$] continuously embedded
 in $B_1^*$ [resp. $B_2^*$]. 
Then $\bar\chi := \nu_1 \hat\otimes_\pi \nu_2$ can be continuously embedded in
$(B_1\hat\otimes_\pi B_2)^*$.  In particular there exists a constant $C>0$
 such that,
for all $u\in \bar\chi$,
\begin{equation}
\label{eq:condizione-chi}
|u|_{(B_1\hat\otimes_\pi B_2)^*} \leq C |u|_{\bar\chi},
\end{equation}
after having identified an element of $\bar\chi$ with an element
of $(B_1\hat\otimes_\pi B_2)^*$, as indicated in Lemma \ref{lm:che-era-remark}.
In other words $\bar\chi$ is a Chi-subspace of $(B_1\hat\otimes_\pi B_2)^*$.
\end{Lemma}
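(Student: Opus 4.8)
The plan is to exhibit the embedding $\bar\chi = \nu_1 \hat\otimes_\pi \nu_2 \hookrightarrow (H_1\hat\otimes_\pi H_2)^*$ explicitly on elementary tensors $a^*\otimes b^*$ with $a^*\in\nu_1$, $b^*\in\nu_2$, and then extend by the universal property of the projective tensor norm. First I would recall that, by hypothesis, there are constants $C_1, C_2 > 0$ with $|a^*|_{H_1^*}\le C_1|a^*|_{\nu_1}$ for all $a^*\in\nu_1$ and $|b^*|_{H_2^*}\le C_2|b^*|_{\nu_2}$ for all $b^*\in\nu_2$; this is precisely the meaning of the continuous embeddings $\nu_i\hookrightarrow H_i^*$. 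The map $\nu_1\times\nu_2 \to (H_1\hat\otimes_\pi H_2)^*$ sending $(a^*,b^*)$ to the functional $i(a^*\otimes b^*)$ described in Lemma \ref{lm:che-era-remark} (note $H_1,H_2$ are Hilbert, hence reflexive, so that lemma applies with $\alpha = \pi$) is bilinear, and by that lemma its value has norm $|i(a^*\otimes b^*)|_{(H_1\hat\otimes_\pi H_2)^*} = |a^*|_{H_1^*}\,|b^*|_{H_2^*} \le C_1 C_2\,|a^*|_{\nu_1}\,|b^*|_{\nu_2}$.

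Next I would invoke the defining (universal) property of the projective tensor product: a bounded bilinear map from $\nu_1\times\nu_2$ into a Banach space $Z$ extends uniquely to a bounded linear map $\nu_1\hat\otimes_\pi\nu_2 \to Z$ with the same norm (see \cite{Ryan02} Section 2.1). Applying this with $Z = (H_1\hat\otimes_\pi H_2)^*$ yields a bounded linear map $j\colon \bar\chi \to (H_1\hat\otimes_\pi H_2)^*$ with $\|j\| \le C_1 C_2$, so that $|j(u)|_{(H_1\hat\otimes_\pi H_2)^*}\le C_1 C_2\,|u|_{\bar\chi}$ for every $u\in\bar\chi$. Setting $C := C_1 C_2$ gives inequality \eqref{eq:condizione-chi}.

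It remains to check that $j$ is injective, so that it is genuinely an embedding allowing the identification of an element of $\bar\chi$ with an element of $(H_1\hat\otimes_\pi H_2)^*$. For this I would use that $j(u)$, viewed as acting on algebraic tensors $\sum_i x_i\otimes y_i\in H_1\otimes H_2$, agrees with the pairing $\sum_i \langle a^*, x_i\rangle\langle b^*, y_i\rangle$ when $u = a^*\otimes b^*$; since $\nu_i$ is dense-compatibly contained in $H_i^*$ and $H_1\otimes H_2$ is dense in $H_1\hat\otimes_\pi H_2$, an element $u$ with $j(u)=0$ must annihilate all of $\nu_1\otimes\nu_2$ acting on $H_1\otimes H_2$, which forces $u=0$ in $\nu_1\hat\otimes_\pi\nu_2$ (using reflexivity of the $H_i$ and the fact that $\nu_i\hookrightarrow H_i^*$ separates points). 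The main obstacle, and the only point requiring genuine care, is this injectivity/identification step: one must be sure that the continuous embeddings $\nu_i\hookrightarrow H_i^*$ are compatible enough (i.e. that distinct elements of $\nu_i$ remain distinct as functionals on $H_i$) for the extended map $j$ on the projective completion to remain injective; the norm estimate itself is an immediate consequence of Lemma \ref{lm:che-era-remark} and the universal property.
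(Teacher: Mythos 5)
Your norm estimate is correct and is essentially the paper's argument in different clothing: the paper takes an $\epsilon$-optimal representation $u=\sum_i a_i^*\otimes b_i^*$ and bounds the associated bilinear form on $H_1\times H_2$ by $\sum_i|a_i^*|_{\nu_1}|b_i^*|_{\nu_2}\leq\epsilon+|u|_{\bar\chi}$, which is precisely the computation that proves the universal property of $\hat\otimes_\pi$ you invoke. (The paper works with the identification of $(H_1\hat\otimes_\pi H_2)^*$ with bounded bilinear forms and normalizes the embedding constants to $1$; you route the single-tensor norm through Lemma \ref{lm:che-era-remark} instead, which is fine.) The one substantive difference is your final injectivity step. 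The paper does not address injectivity at all -- it only establishes the norm bound on the algebraic tensor product and extends by continuity -- so you are not missing anything the paper supplies. But be aware that your sketch of injectivity is not a proof: for a general element of the completed space $\nu_1\hat\otimes_\pi\nu_2$ (an absolutely convergent series of elementary tensors, not a finite sum), the vanishing of the induced bilinear form on $H_1\times H_2$ does not follow from ``$\nu_i$ separates points'' alone; injectivity of canonical maps out of projective tensor products is a genuinely delicate matter tied to the approximation property, and the map $\nu_1\hat\otimes_\pi\nu_2\to H_1^*\hat\otimes_\pi H_2^*$ induced by subspace inclusions can in principle fail to be injective. So either leave the identification at the informal level the paper adopts, or add a hypothesis (e.g.\ one of the $\nu_i$ has the approximation property, as all spaces actually used in the paper do) if you want the injectivity claim to be rigorous.
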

\begin{Remark}\label{Rlm:era52}
In particular    $ B_1^* \hat\otimes_\pi B_2^*$ 
is a  Chi-subspace of    $(B_1\hat\otimes_\pi B_2)^*$.
\end{Remark}
\begin{proof}[Proof of Lemma \ref{lm:era52}]
To simplify the notations assume the norm of the injections $\nu_1 
\hookrightarrow B_1^*$
 and $\nu_2 \hookrightarrow B_2^*$ to be less or equal than $1$.
We remind that  $(B_1\hat\otimes_\pi B_2)^*$ is isometrically identified 
with the Banach space 
of the bilinear bounded forms from $B_1\times B_2$ to $\mathbb{R}$,
denoted by ${\mathcal Bi}(B_1,B_2)$.

Consider first an element $u \in \bar\chi$ of the form $u=\sum_{i=1}^n a_i^*\otimes b_i^*$ for some $a_i^*\in \nu_1$ and $b_i^* \in \nu_2$. $u$ can be identified with an element of ${\mathcal Bi}(B_1,B_2)$ acting as 
\[
u\left ( \phi, \psi \right ):= \sum_{i=1}^n \left\langle a_i^*, \phi \right\rangle \left\langle b_i^*, \psi \right\rangle.
\]



We can choose $a_i^*\in \nu_1$ and $b_i^*\in \nu_2$ such that $u=\sum_{i=1}^n a_i^*\otimes b_i^*$ and
\[
|u|_{\bar\chi} = \inf \left \{ \sum_{i=1}^n |x_i|_{\nu_1} |y_i|_{\nu_2} \; : \; u = \sum_{i=1}^{n} x_i \otimes y_i, \qquad x_i\in \nu_1,\; y_i \in \nu_2  \right \}
 > -\epsilon + \sum_{i=1}^n |a_i^*|_{\nu_1} |b_i^*|_{\nu_2}.
\]
Using such an expression for $u$ we have 
\[
\| u \|_{{\mathcal Bi}(B_1,B_2)} = \sup_{|\phi|_{B_1}, |\psi|_{B_2} \leq 1} \left | \sum_{i=1}^n 
\left\langle a_i^*, \phi \right\rangle \left\langle b_i^*, \psi
 \right\rangle \right | 
\leq \sum_{i=1}^n |a_i^*|_{B_1^*} |b_i^*|_{B_2^*}
 \leq \sum_{i=1}^n |a_i^*|_{\nu_1} |b_i^*|_{\nu_2} \leq \epsilon + |u|_{\bar\chi}.
\]
Since $\epsilon$ is arbitrary, we conclude
 that $\| u \|_{{\mathcal Bi}(B_1,B_2)} \leq  |u|_{\bar\chi}$.

Since this proves that the  mapping  that associates to
 $u\in \nu_1 \hat\otimes_\pi \nu_2$ its corresponding element in
 ${\mathcal Bi}(B_1,B_2)$, has norm $1$ on the dense subset 
$\nu_1 \hat\otimes_\pi \nu_2$, then the claim is proved.
\end{proof}

\begin{Remark} \label{RChiProduct}
Even though  the Chi-subspaces of tensor 
product type,  described in Lemma \ref{lm:era52} are natural,
 there are examples of Chi-subspace not of that form, 
see e.g. Section 2.6 in \cite{DiGirolamiRusso09}.
\end{Remark}

Let $\chi$ be a generic Chi-subspace. We introduce the following definition.
\begin{Definition}
\label{def:covariation} 
Given $\X$  [resp. $\Y$]  a  $B_1$-valued [resp. $B_2$-valued] process,
we say that $(\X, \Y)$ admits a $\chi$-covariation if the two
 following conditions are satisfied.
\begin{description}
\item[H1] For any sequence of positive real numbers $\epsilon_{n}\searrow 0$ 
there exists a subsequence $\epsilon_{n_{k}}$ such that 
\begin{equation} \label{FDefC}
\begin{split}
&\sup_{k}\int_{s}^{T} \frac{\left | (J\left(\X({r+\epsilon_{n_{k}}})-\X({r}))
\otimes(\Y({r+\epsilon_{n_{k}}})-\Y({r}))\right)\right |_{\chi^{\ast}} }{\epsilon_{n_{k}}} ds
\;< \infty\; a.s. 
\end{split}
\end{equation}

\item[H2]
If we denote by $[X,\Y]_\chi^{\epsilon}$ the application
\begin{equation}
\label{eq:def-chi-epsilon}
\left \{
\begin{array}{l}
[\X,\Y]_\chi^{\epsilon}:\chi\longrightarrow \mathscr{C}([s,T])\\[5pt]
\displaystyle
\phi \mapsto
\int_{s}^{\cdot} \tensor[_{\chi}]{\left\langle \phi,
\frac{J\left(  \left(\X({r+\epsilon})-\X({r})\right)\otimes \left(\Y({r+\epsilon})-\Y({r})\right)  \right)}{\epsilon} 
\right\rangle}{_{\chi^{\ast}}} dr, 
\end{array}
\right .
\end{equation}
where $ J: B_{1}\hat{\otimes}_{\pi}B_{2} \longrightarrow (B_{1}\hat{\otimes}_{\pi}B_{2})^{\ast\ast}$ is the canonical injection between a space and its bidual, 
the following two properties hold.
\begin{description}
\item{(i)} There exists an application, denoted by $[\X,\Y]_\chi$, defined on $\chi$ with values in $\mathscr{C}([s,T])$, 
satisfying
\begin{equation}
[\X,\Y]_\chi^{\epsilon}(\phi)\xrightarrow[\epsilon\longrightarrow 0_{+}]{ucp} [\X,\Y]_\chi(\phi), 
\end{equation} 
for every $\phi \in \chi\subset
(B_{1}\hat{\otimes}_{\pi}B_{2})^{\ast}$.
\item{(ii)} 
There exists a strongly measurable process
 $\widetilde{[\X,\Y]}_\chi:\Omega\times [s,T]\longrightarrow \chi^{\ast}$, 
such that
\begin{itemize}
\item for almost all 
$\omega \in \Omega$, $\widetilde{[\X,\Y]}_\chi(\omega,\cdot)$ is a (c\`adl\`ag) bounded variation process, 
\item 
$\widetilde{[\X,\Y]}_\chi(\cdot,t)(\phi)=[\X,\Y]_\chi(\phi)(\cdot,t)$ a.s. for all $\phi\in \chi$, $t\in [s,T]$.
\end{itemize}
\end{description}
\end{description}
\end{Definition}

\begin{Remark} \label{RTensBanach}
Since,  $(B_{1}\hat{\otimes}_{\pi}B_{2})^{\ast \ast}$ is continuously embedded
in $\chi^\ast$, then $J(a \otimes b)$ can be considered as 
an element of $\chi^\ast$.
Therefore we have
$$ \vert J(a \otimes b) \vert _{\chi^*} =
\sup_{\phi \in \chi, \vert \phi \vert_\chi \le 1} \langle J(a \otimes b), \phi \rangle 
= \sup_{\phi \in \chi, \vert \phi \vert_\chi \le 1}
 \vert \phi(a \otimes b) \vert. $$ 
We can apply this fact to the expression \eqref{FDefC} considering 
$ a = \X({r+\epsilon_{n_{k}}})-\X({r})$ and  $b = \Y({r+\epsilon_{n_{k}}})-\Y({r}) $.
\end{Remark}

\begin{Remark} \label{RDGR}  An easy consequence of Remark 3.10 and Lemma 3.18
in \cite{DiGirolamiRusso11} is the following. 
We set
\begin{equation} \label{Aepsilon}
A(\varepsilon) := \int_{s}^{T} \frac{\left | (J\left(\X({r+\epsilon})-\X({r}))
\otimes(\Y({r+\epsilon})-\Y({r}))\right)\right |_{\chi^{\ast}} }{\epsilon} dr.
\end{equation}
\begin{enumerate}
\item
If $\lim_{\epsilon \rightarrow 0} A(\epsilon) $ exists in probability
then Condition {\bf H1} of Definition \ref{def:covariation} is verified.
\item If $\lim_{\epsilon \rightarrow 0} A(\epsilon) = 0$ in probability
then $(\X, \Y)$  admits a $\chi$-covariation and 
$\widetilde{[\X,\Y]}$ vanishes. 
\end{enumerate}
\end{Remark}

\bigskip

If $(\X,\Y)$ admits a $\chi$-covariation we
 call $\chi$-covariation of $(\X,\Y)$ the $\chi^{\ast}$-valued process  $\widetilde{[\X,\Y]}_\chi$ defined for every
$\omega\in \Omega $ and $t\in [s,T]$ by $\phi \mapsto
\widetilde{[\X,\Y]}_\chi(\omega,t)(\phi)=[\X,\Y]_\chi(\phi)(\omega,t) $. 
By abuse of notation, $[\X,\Y]_\chi$ will also be often called 
$\chi$-covariation
and it will be confused with $\widetilde{[\X,\Y]}_\chi$.

\begin{Definition}
\label{def:global-cov}
If $\chi = (B_1\hat\otimes_\pi B_2)^*$ the $\chi$-covariation is called 
\emph{global covariation}. In this case we  omit the index $(B_1\hat\otimes_\pi B_2)^*$ using the notations $[\X, \Y]$ and $\widetilde{[\X, \Y]}$.
\end{Definition}
\begin{Remark}
\label{rm:RGlobTens}
The notions of scalar and tensor covariation have been 
defined in Definitions \ref{def:per-nota} and \ref{def:tensorcovariation}.
\begin{enumerate}
\item Suppose that  $\X$ and $\Y$ admits a scalar quadratic variation
and $(\X,\Y)$ has a tensor covariation,
 denoted   by  
$[\X, \Y]^{\otimes}$. 
Then $(\X,\Y)$  admits a global covariation $[\X, \Y]$.
  In particular, recalling that $B_1\hat\otimes_\pi B_2$ is embedded in
 $(B_1\hat\otimes_\pi B_2)^{**}$, we have 
$\widetilde{[\X, \Y]} = [\X, \Y]^{\otimes}$.
The proof is a slight adaptation of the one of
 Proposition 3.14 in \cite{DiGirolamiRusso11}.
In particular condition {\bf H1} holds using Cauchy-Schwarz inequality.
\item If $\X$ admits a scalar zero quadratic variation
then, by definition,
 the tensor covariation of $(\X,\X)$ also vanishes.
Consequently, by item (i) $\X$ also admits a global quadratic 
variation, which is also zero.
\end{enumerate}
\end{Remark}

\begin{Remark}
\label{rm:RChiGlob}
If $(\X,\Y)$ admits a global covariation then it admits a $\chi$-covariation for any Chi-subspace $\chi$. Moreover $[\X, \Y]_{\chi}(\phi)= [\X, \Y](\phi)$ for all $\phi\in \chi$.
\end{Remark}

We say that a process $\X$ admits a $\chi$-quadratic variation if $(\X, \X)$
 admits a $\chi$-covariation. The process
$\widetilde{[\X,\X]}_\chi$, often denoted by $\widetilde{[\X]}_\chi$,
is also called $\chi$-quadratic variation of $\X$.

\begin{Remark}
\label{rm:H1-casoglobal}
For the global covariation case (i.e. for $\chi = (B_1 \hat\otimes_\pi B_2)^*$) the condition \textbf{H1}  reduces to
\[
\sup_{k}\int_{s}^{T} \frac{1}{\epsilon_{n_{k}}}\left | (\X({r+\epsilon_{n_{k}}})-\X({r})) \right |_{B_1}  
\left | \Y({r+\epsilon_{n_{k}}})-\Y({r}))\right |_{B_2} ds \;< \infty\; a.s.
\]
In fact the embedding of $(B_1 \hat\otimes_\pi B_2)$ in its bi-dual is isometric and, for $x\in B_1$ and $y\in B_2$, $|x\otimes y|_{(B_1 \hat\otimes_\pi B_2)} = |x|_1 |y|_2$.
\end{Remark}
The product of a real finite quadratic variation process 
and a zero real quadratic variation process is again a zero
quadratic variation processes. Under some conditions this
can be generalized to the infinite dimensional case.

\begin{Proposition} \label{PFQVZ}
Let $i = 1,2$ and $\nu_i$ be a real Banach space continuously
embedded in the dual $B_i^*$ of a real Banach space $B_i$.
Let consider the Chi-subspace of the type 
$\chi_1 = \nu_1 \hat\otimes_\pi B_2^* $ and   
$\chi_2 = B_1^* \hat\otimes_\pi \nu_2 $,
$\hat \chi_i =  \nu_i  \hat\otimes_\pi \nu_i$, $ i =1,2$. 
 Let $\X$ [resp. $\Y$] be a process with values in $B_1$ [resp. $B_2$].
\begin{enumerate} 
\item Suppose that $\X$ admits a $\hat \chi_1$-quadratic variation 
and $\Y$  a zero scalar quadratic variation.
Then $[\X,\Y]_{\chi_1} = 0$. 
\item Similarly suppose that  $\Y$ admits a $\hat \chi_2$-quadratic variation 
and $\X$  a zero  scalar quadratic variation.
Then $[\X,\Y]_{\chi_2} = 0$. 
\end{enumerate}
\end{Proposition}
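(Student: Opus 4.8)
The plan is to prove item (1) — item (2) being completely symmetric — by verifying the two conditions **H1** and **H2** of Definition \ref{def:covariation} for the pair $(\X,\Y)$ with respect to the Chi-subspace $\chi_1 = \nu_1 \hat\otimes_\pi H_2^*$. The strategy is to show directly that the quantity $A(\epsilon)$ of \eqref{Aepsilon} converges to zero in probability, since by Remark \ref{RDGR}(2) this simultaneously gives both conditions and forces $\widetilde{[\X,\Y]}_{\chi_1} = 0$, which is the assertion $[\X,\Y]_{\chi_1}=0$.

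The key estimate is a Cauchy--Schwarz bound. Writing $a_r := \X(r+\epsilon) - \X(r)$ and $b_r := \Y(r+\epsilon) - \Y(r)$, I would use Remark \ref{RTensBanach} together with the tensor-product structure of $\chi_1 = \nu_1 \hat\otimes_\pi H_2^*$ and Lemma \ref{lm:che-era-remark} to identify $|J(a_r \otimes b_r)|_{\chi_1^*}$ with $|a_r|_{\nu_1^*}\,|b_r|_{H_2^{**}} = |a_r|_{\nu_1^*}\,|b_r|_{2}$; more precisely $\chi_1^* \supseteq \nu_1^* \hat\otimes_\epsilon H_2^{**}$ in the relevant sense, and the value of the duality pairing of $\phi = a_1^* \otimes b_2^{**}$ on $a_r \otimes b_r$ factorizes. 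Then
\[
A(\epsilon) = \int_s^T \frac{|a_r|_{\nu_1^*}\,|b_r|_2}{\epsilon}\, dr
\le \left( \int_s^T \frac{|a_r|_{\nu_1^*}^2}{\epsilon}\, dr \right)^{1/2}
\left( \int_s^T \frac{|b_r|_2^2}{\epsilon}\, dr \right)^{1/2}.
\]
The second factor is exactly $\big([\Y,\Y]^{\epsilon,\mathbb{R}}(T)\big)^{1/2}$, which converges to zero in probability because $\Y$ has zero scalar quadratic variation. For the first factor: since $\X$ admits a $\hat\chi_1$-quadratic variation with $\hat\chi_1 = \nu_1 \hat\otimes_\pi \nu_1$, condition **H1** applied to $\X$ and $\hat\chi_1$ gives (along a subsequence $\epsilon_{n_k}$) that $\sup_k \int_s^T \epsilon_{n_k}^{-1} |J(a_r \otimes a_r)|_{\hat\chi_1^*}\, dr < \infty$ a.s.; and, arguing as in Remark \ref{RTensBanach}/Lemma \ref{lm:che-era-remark} again, $|J(a_r\otimes a_r)|_{\hat\chi_1^*} = |a_r|_{\nu_1^*}^2$. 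Hence the first factor is bounded (along the subsequence) in probability. A product of a sequence bounded in probability with one going to zero in probability goes to zero in probability; passing through an arbitrary sequence $\epsilon_n \searrow 0$ and extracting the subsequence where **H1** for $\X$ holds, one obtains $A(\epsilon_n) \to 0$ in probability along a subsequence, and since this works for every initial sequence, $A(\epsilon) \to 0$ in probability.

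The main obstacle I anticipate is the careful identification of the $\chi_1^*$- and $\hat\chi_1^*$-norms of the elementary tensors $J(a_r\otimes b_r)$ and $J(a_r \otimes a_r)$: one must make sure that the Riesz identification $\nu_i \hookrightarrow H_i^*$ (Notation convention after Lemma \ref{lm:che-era-remark}) interacts correctly with the embeddings $\nu_1 \hat\otimes_\pi H_2^* \hookrightarrow (H_1 \hat\otimes_\pi H_2)^*$ of Lemma \ref{lm:era52}, so that $|a_r|_{\nu_1^*}$ is the same quantity whether computed as a norm on the bidual side of $\chi_1^*$ or on the $H_1$-side. This is essentially Lemma \ref{lm:che-era-remark} applied with $\alpha = \pi$, but one should check that the reflexivity hypotheses there are met (they are, $H_i$ and the Hilbert $\nu_i$ being reflexive). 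Everything else — Cauchy--Schwarz, the subsequence extraction for convergence in probability, and the reduction of item (2) to item (1) by swapping roles — is routine.
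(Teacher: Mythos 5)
Your proposal is correct and follows essentially the same route as the paper's own proof: reduce to showing $A(\epsilon)\to 0$ in probability via Remark \ref{RDGR}(2), identify $|J(a_r\otimes b_r)|_{\chi_1^*}=|a_r|_{\nu_1^*}|b_r|_2$ and $|J(a_r\otimes a_r)|_{\hat\chi_1^*}=|a_r|_{\nu_1^*}^2$ through Lemma \ref{lm:che-era-remark}, and then apply Cauchy--Schwarz, with the first factor bounded along subsequences by condition \textbf{H1} for the $\hat\chi_1$-quadratic variation of $\X$ and the second factor tending to zero by the zero scalar quadratic variation of $\Y$. The subsequence extraction and the symmetry reduction of item (2) to item (1) are likewise as in the paper.
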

\begin{proof}
 We remark that Lemma \ref{lm:era52} imply that
$\chi_i$ and $\hat \chi_i$, $i = 1,2$ are indeed Chi-subspaces.
 By item 2. of Remark \ref{RDGR},
it is enough to show that $A(\varepsilon)$ defined in 
\eqref{Aepsilon} converge to zero, with $\chi = \chi_i, i = 1,2$.
By symmetry it is enough to show item 1.\\
We set $\chi = \chi_1$.
The Banach space $B_i$ is isometrically embedded in its bidual $B_i^{**},
i=1,2,$ so, since $\nu_1\subseteq B_1^*$ with continuous inclusion, we have $B_1 \subseteq B_1^{**} \subset \nu_1^*$ where the inclusion are continuous.

Moreover, since $\chi = \nu_1 \hat \otimes_\pi B_2^* \subseteq 
B_1^* \otimes_\pi B_2^* \subset (B_1 \hat \otimes_\pi B_2)^*$,
 with  continuous inclusions, taking into account Remark \ref{Rlm:era52},
 we have

$$    J(B_1 \hat \otimes_\pi B_2) \subset  
(B_1 \hat \otimes_\pi B_2)^{**} \subset \chi^*.
 $$
Let $a \in B_1$ and  $b \in B_2$.
We have
\begin{multline}
\label{eq:step-da-richiamare}
\vert J(a \otimes b) \vert_{\chi^*} =  \sup_{\vert \varphi \vert_{\nu_1} \le 1, 
\vert \psi \vert_{B_2^*}  \le 1}  
\vert {}_{\chi_1^*} \langle J(a \otimes b), \varphi \otimes \psi 
\rangle_{\chi_1}\vert  \\
= \sup_{\vert \varphi \vert_{\nu_1} \le 1}
\vert {}_{\nu_1} \langle \varphi, a \rangle_{\nu_1^*} \vert
 \sup_{\vert \psi \vert_{B_2^{*}} \le 1}
\vert {}_{B_2^*} \langle \psi, b \rangle_{B_2^{**}} \vert 
= \vert a \vert_{\nu_1^*}\vert b \vert_{B_2^{**}} = \vert a \vert_{\nu_1^*}\vert b \vert_{B_2}.
\end{multline}

Consequently, 
with $a =  \X(r+\varepsilon) - \X(r)$ and $b =\Y(r+\varepsilon) - \Y(r)$
for $r \in [s,T]$, 
 we have 
\begin{multline}
 A(\varepsilon) =
 \int_{s}^{T} \frac{\left | (J\left(\X({r+\epsilon})-\X({r}))
\otimes(\Y({r+\epsilon})-\Y({r}))\right)\right |_{\chi^{\ast}} }{\epsilon} dr
=  \int_s^T \vert \X(r+\varepsilon) - \X(r) \vert_{\nu_1^*}
\vert \Y(r+\varepsilon) - \Y(r) \vert_{B_2} \frac{dr}{\varepsilon}\\ 
\le \left(\int_s^T \vert \X(r+\varepsilon) - \X(r) \vert_{\nu_1^*}^2 
\frac{dr}{\varepsilon}
\int_s^T  \vert \Y(r+\varepsilon) - \Y(r) \vert_{B_2}^2 
\frac{dr}{\varepsilon} \right)^{1/2} \\
= \left( \int_{s}^{T} \frac{\left | (J\left(\X({r+\epsilon})-\X({r}))
\otimes(\X({r+\epsilon})-\X({r}))\right)\right |_{\hat \chi_1^{\ast}} }{\epsilon} dr
\right)^{1/2}  \left( \int_{s}^{T} \frac{\left |\Y({r+\epsilon})-\Y({r}))
\right |^2_{B_2}}{\epsilon} dr
\right)^{1/2}.
\end{multline}
The last equality is obtained using an argument similar to (\ref{eq:step-da-richiamare}).
%
The condition {\bf H1} related to the $\hat \chi_1$-quadratic variation of $\X$
and the zero scalar quadratic variation of  $\Y$, imply
that previous expression converges to zero.
\end{proof}

\medskip

When one of the processes is real the formalism of global 
covariation can be simplified as shown in the following proposition.

Here, and in the sequel, we consider the case of a real separable Hilbert space $H$ instead of a general real Banach space $B$.
According to our conventions, $|\cdot|$ represents both the norm in $H$ and the absolute value 
in $\mathbb{R}$.
\begin{Proposition}
\label{pr:1}
Let $H$ be a real separable Hilbert space. Let be $\X\colon [s,T] \times \Omega \to H$ a Bochner integrable process and $Y\colon [s,T] \times \Omega \to \mathbb{R}$ a real valued process. Suppose 
the following.
\begin{itemize}
\item[(a)] For any $\epsilon$,
$\frac{1}{\epsilon} \int_s^T |\X(r+\epsilon) - \X(r)|
 |Y(r+\epsilon) - Y(r)| \ud r$ is bounded by a r.v. $A(\epsilon)$
such that  $A(\epsilon)$ converges in probability when $\epsilon \to 0$.
\item[(b)] For every $h\in H$ the following limit 
\[
C(t)(h):= \lim_{\epsilon \to 0^+} \frac{1}{\epsilon} \int_s^t \left \langle h, \X(r+ \epsilon) - \X(r) \right\rangle (Y(r+ \epsilon) - Y(r)) \ud r
\]
exists ucp and there exists a continuous process $\tilde C\colon [s,T] \times \Omega \to H$ s.t.
\[
\left\langle \tilde C(t,\omega), h \right\rangle = C(t)(h)(\omega)\qquad \text{for $\mathbb{P}$-a.s. $\omega\in\Omega$},
\]
for all $t\in [s,T]$ and $h\in H$.
\end{itemize}
If we identify $H$ with $(H \hat\otimes_{\pi}\mathbb{R})^*$, then $\X$ 
and $Y$ admit a global covariation and  $\tilde C= \widetilde{[\X,Y]}$.
\end{Proposition}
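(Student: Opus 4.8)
The plan is to verify the two conditions \textbf{H1} and \textbf{H2} of Definition~\ref{def:covariation} in the case $H_1=H$, $H_2=\R$, $\chi=(H\hat\otimes_\pi\R)^*$, and then to check that the associated $\chi^*$-valued process is $\tilde C$. Throughout, $H\hat\otimes_\pi\R$ is identified with $H$, so that (via Riesz) both $\chi$ and $\chi^*$ are identified with $H$, and an elementary tensor $x\otimes c\in H\otimes\R$ corresponds to $cx\in H$. Under these identifications, Remark~\ref{RTensBanach} gives, for $\phi=h^*\in\chi$, the identity $[\X,Y]_\chi^\epsilon(h^*)(t)=\frac1\epsilon\int_s^t\langle h,\X(r+\epsilon)-\X(r)\rangle\,(Y(r+\epsilon)-Y(r))\,\ud r$, together with $|J((\X(r+\epsilon)-\X(r))\otimes(Y(r+\epsilon)-Y(r)))|_{\chi^*}=|\X(r+\epsilon)-\X(r)|\,|Y(r+\epsilon)-Y(r)|$. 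Thus, as in Remark~\ref{rm:H1-casoglobal}, \textbf{H1} is exactly the boundedness requirement of hypothesis (a), while the $\epsilon$-approximation appearing in \textbf{H2} evaluated at $h^*$ is exactly the one in hypothesis (b).

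With this dictionary, \textbf{H1} follows from (a): given $\epsilon_n\searrow 0$, since $A_{\epsilon_n}$ converges in probability one extracts a \emph{deterministic} subsequence $\epsilon_{n_k}$ along which $A_{\epsilon_{n_k}}$ converges a.s., hence is a.s.\ bounded in $k$; then $\sup_k\frac1{\epsilon_{n_k}}\int_s^T|\X(r+\epsilon_{n_k})-\X(r)|\,|Y(r+\epsilon_{n_k})-Y(r)|\,\ud r<\infty$ a.s.\ because the integrand is dominated by $A_{\epsilon_{n_k}}$. Condition \textbf{H2}(i) follows from (b): for every $\phi=h^*\in\chi$ the identity above shows $[\X,Y]_\chi^\epsilon(\phi)\xrightarrow{ucp}C(\cdot)(h)$, so we set $[\X,Y]_\chi(\phi):=C(\cdot)(h)$, which is a real continuous process since $C(t)(h)=\langle\tilde C(t),h\rangle$ and $\tilde C$ is continuous.

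The remaining task is \textbf{H2}(ii), and the obvious candidate for $\widetilde{[\X,Y]}_\chi$ is $\tilde C$ itself: it takes values in $H=\chi^*$, it is continuous (hence c\`adl\`ag) and, $H$ being separable, strongly measurable, and by construction $\tilde C(\cdot,t)(h^*)=\langle\tilde C(t),h\rangle=C(t)(h)=[\X,Y]_\chi(h^*)(\cdot,t)$ a.s. The only non-routine point, and the main obstacle, is to show that $\tilde C(\omega,\cdot)$ has bounded variation for a.e.\ $\omega$. I would fix a countable subset $D\subset H$ whose intersection with the closed unit ball of $H$ is dense in it, and work on the full-measure event on which, along a suitable further deterministic subsequence $\epsilon_m$ of the subsequence produced for \textbf{H1}, one has both $K(\omega):=\sup_m\frac1{\epsilon_m}\int_s^T|\X(r+\epsilon_m)-\X(r)|\,|Y(r+\epsilon_m)-Y(r)|\,\ud r<\infty$ (this bound is inherited from \textbf{H1}) and $[\X,Y]_\chi^{\epsilon_m}(h^*)\to C(\cdot)(h)$ uniformly on $[s,T]$ for every $h\in D$ (obtained from the ucp convergence in (b) by a diagonal extraction over the countable set $D$, the \textbf{H1} bound being stable under passing to subsequences).

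On that event, fix $\omega$. For any partition $s=t_0<\dots<t_N=t$ and any unit vectors $h_1,\dots,h_N\in D$, write $\sum_{i=1}^N\bigl(C(t_i)(h_i)-C(t_{i-1})(h_i)\bigr)=\lim_m\sum_{i=1}^N\bigl([\X,Y]_\chi^{\epsilon_m}(h_i^*)(t_i)-[\X,Y]_\chi^{\epsilon_m}(h_i^*)(t_{i-1})\bigr)$; bounding each increment by $\frac1{\epsilon_m}\int_{t_{i-1}}^{t_i}|\X(r+\epsilon_m)-\X(r)|\,|Y(r+\epsilon_m)-Y(r)|\,\ud r$ (using $|h_i|\le1$) and exploiting that the intervals are disjoint yields $\sum_{i=1}^N\langle\tilde C(t_i)-\tilde C(t_{i-1}),h_i\rangle\le K(\omega)$. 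Since $|v|_H=\sup\{\langle v,h\rangle:h\in D,\ |h|\le1\}$, optimizing over the $h_i$ gives $\sum_{i=1}^N|\tilde C(t_i)-\tilde C(t_{i-1})|_H\le K(\omega)$, uniformly in the partition, so $\tilde C(\omega,\cdot)$ has total variation at most $K(\omega)$. This proves \textbf{H2}(ii) with $\widetilde{[\X,Y]}_\chi=\tilde C$, and hence the proposition. (Alternatively, once \textbf{H1} and \textbf{H2}(i) are established one may invoke the general results of \cite{DiGirolamiRusso11} quoted in Remark~\ref{RDGR} to produce a bounded-variation representative, but the partition estimate above is self-contained.)
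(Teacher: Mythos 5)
Your proof is correct, but it takes a genuinely different route from the paper, whose entire proof of Proposition \ref{pr:1} consists of observing that \textbf{H1} follows from Remark \ref{rm:H1-casoglobal} and then invoking Corollary 3.26 of \cite{DiGirolamiRusso11} for the rest. You instead verify Definition \ref{def:covariation} directly: your reductions of \textbf{H1} to hypothesis (a) (via an a.s.-convergent subsequence of $A_{\epsilon_n}$) and of \textbf{H2}(i) to hypothesis (b) are exactly the identifications the paper has in mind, and the real added content is your self-contained treatment of \textbf{H2}(ii), i.e.\ the bounded variation of $\tilde C$. The partition argument is sound: the one-sided bound $\sum_{i}\langle \tilde C(t_i)-\tilde C(t_{i-1}),h_i\rangle\le K(\omega)$ for all choices of $h_i$ in a countable dense subset of the unit ball, combined with disjointness of the subintervals and the duality $|v|_H=\sup_{h\in D,\,|h|\le 1}\langle v,h\rangle$, does yield total variation at most $K(\omega)$; the double subsequence extraction (first for the a.s.\ bound inherited from (a), then a diagonal extraction over the countable set $D$ for a.s.\ uniform convergence) is standard and legitimate, and establishing bounded variation of $\tilde C$ along one subsequence suffices since it is a pathwise property of $\tilde C$ itself. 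What the paper's citation buys is brevity; what your argument buys is a transparent, self-contained explanation of why the \textbf{H1} bound controls the total variation of the covariation process --- which is precisely the content of the quoted corollary of \cite{DiGirolamiRusso11} in this special case, as you yourself note in your closing parenthetical. The only cosmetic slip is that you first call the $h_i$ ``unit vectors'' of $D$ and then use only $|h_i|\le 1$; stating the latter throughout removes the (harmless) mismatch with the density assumption on $D$ in the closed unit ball rather than the unit sphere.
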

\begin{proof}
Taking into account the identification of $H$ with $(H\hat\otimes_\pi \mathbb{R})^*$  the result is a consequence of Corollary 3.26 of \cite{DiGirolamiRusso11}. In particular condition \textbf{H1} follows from Remark \ref{rm:H1-casoglobal}.

\end{proof}

\subsection{Relations with the tensor covariation and
the classical tensor covariation}
The notions of tensor covariation recalled in Definition \ref{def:tensorcovariation} concerns general processes. In the specific case when
 $H_1$ and $H_2$ are two separable Hilbert spaces and $\M \colon [s,T]\times\Omega \to H_1$, $\N \colon [s,T]\times\Omega \to H_2$ are two continuous
 local martingales, another (classical) notion of tensor covariation is defined, see for instance in Section 23.1 of \cite{Metivier82}. This will be
 denoted by $[\M, \N ]^{cl}$. Recall that the notion introduced in Definition \ref{def:tensorcovariation} is denoted  by  $[\M, \N ]^{\otimes}$.

\begin{Remark}
\label{rm:propertyP}
We observe the following facts.
\begin{itemize}
\item[(i)] According to Chapter 22 and 23 in \cite{Metivier82}, given an
  $H_1$-valued [resp. $H_2$-valued] continuous local martingale $\M$ 
[resp. $\N$],
 $[\M, \N ]^{cl}$ is an $(H_1\hat\otimes_\pi H_2)$-valued process. Recall that $(H_1\hat\otimes_\pi H_2) \subseteq (H_1\hat\otimes_\pi H_2)^{**}$.
\item[(ii)] Taking into account Lemma \ref{lm:che-era-remark} we know that, given $h \in H_1$ and $k \in H_2$,  $h^* \otimes k^*$ can be considered as an element of $(H_1 \hat\otimes_\pi H_2)^*$. One has
\begin{equation}
\label{eq:propertyP}
[\M, \N ]^{cl}(t)(h^*\otimes k^*) = [ \left\langle \M,h \right\rangle \left\langle \N,k \right\rangle ]^{cl}(t),
\end{equation}
where $h^*$ [resp. $k^*$] is associated with $h$ [resp. $k$] via Riesz theorem. 
This property characterizes $[\M, \N ]^{cl}$, see e.g. \cite{DaPratoZabczyk92}, Section 3.4 after Proposition 3.11.
\item[(iii)] If $H_2=\mathbb{R}$ and $\mathbb{N}=N$ is a real continuous local martingale then, identifying $H_1\hat\otimes_\pi H_2$ with $H_1$, $[\M, \N ]^{cl}$ can be considered as  an  $H_1$-valued process. The characterization (\ref{eq:propertyP}) can be translated into
\begin{equation}
\label{eq:propertyP2}
[\M, \N ]^{cl}(t)(h^*) =
[ \left\langle \M,h \right\rangle, N ]^{cl}(t), \forall h \in H_1.
\end{equation}
By inspection, this allows us to see that
the classical covariation between $\M$ and $N$ can be expressed as
\begin{equation}
\label{eq:defNM}
[\M,N]^{cl}(t):= \M(t) N(t) -  \M(s) N(s) -
\int_s^t N(r) \ud \M(r) - \int_s^t \M(r) \ud N(r).
\end{equation}
\end{itemize}
\end{Remark}


In the sequel $H$ will denote a separable Hilbert space.
\begin{Remark}  
\label{rm:after-definition-chi-D}
The following properties hold.
\begin{enumerate}
\item If $\M$ is a continuous local martingale with values in $H$
 then $\M$ has a scalar quadratic variation, 
see Proposition 1.7 in \cite{DiGirolamiRusso11}.
 \item If $\M$ is a continuous local martingale with values in $H$ 
then $\M$ has a tensor quadratic variation. This fact is proved 
in Proposition 1.6 of \cite{DiGirolamiRusso11}. Using similar arguments 
one can see that if $\M_1$ [resp. $\M_2$] is a continuous local martingale
 with values in $H_1$ [resp. $H_2$] then 
$(\M_1,\M_2)$ admits a tensor covariation.
\end{enumerate}
\end{Remark}

\begin{Lemma}
\label{lm:L411}
Let 
$H$ be a  separable Hilbert space.
 Let $\M$ [resp. $\N$] be a continuous local martingale
with values in $H$.
 Then
$(\M,\N)$ admits a tensor covariation and 
\begin{equation}
\label{eq:eq413}
[\M, \N]^{\otimes}= [\M, \N]^{cl}. 
\end{equation}
In particular $(\M,\N)$ admits a global covariation 
and 
\begin{equation}
\label{eq:eq414}
\widetilde{[\M, \N]}= [\M, \N]^{cl}. 
\end{equation}
\end{Lemma}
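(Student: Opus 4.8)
The plan is to reduce the statement to the scalar/real case by testing against elements of the Chi-subspace $H^*\hat\otimes_\pi H^*$ (which is a Chi-subspace of $(H\hat\otimes_\pi H)^*$ by Remark \ref{Rlm:era52}), and then to exploit the fact that the tensor covariation is determined by the global covariation together with condition \textbf{H1}. First I would recall from Remark \ref{rm:after-definition-chi-D}(2) that $(\M,\N)$ admits a tensor covariation $[\M,\N]^\otimes$; this is the $(H\hat\otimes_\pi H)$-valued process obtained as the ucp limit of the $\epsilon$-approximations in Definition \ref{def:tensorcovariation}. By Remark \ref{rm:RGlobTens}, since $\M$ and $\N$ each admit a scalar quadratic variation (Remark \ref{rm:after-definition-chi-D}(1)) and $(\M,\N)$ admits a tensor covariation, the pair $(\M,\N)$ admits a global covariation with $\widetilde{[\M,\N]}=[\M,\N]^\otimes$. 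So \eqref{eq:eq414} follows from \eqref{eq:eq413}, and the task reduces to identifying $[\M,\N]^\otimes$ with the classical object $[\M,\N]^{cl}$.

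Next I would show $[\M,\N]^\otimes=[\M,\N]^{cl}$ by testing both sides against the total family $\{h^*\otimes k^*: h,k\in H\}$ in $(H\hat\otimes_\pi H)^*$. On one side, $\langle h^*\otimes k^*, [\M,\N]^\otimes\rangle$ is, by bilinearity and continuity of the duality pairing together with the definition of the $\epsilon$-approximating integrals, exactly the limit of
\[
\int_s^\cdot \frac{\langle \M(r+\epsilon)-\M(r),h\rangle\,\langle \N(r+\epsilon)-\N(r),k\rangle}{\epsilon}\,\ud r,
\]
i.e. the real covariation $[\langle \M,h\rangle,\langle \N,k\rangle]$ in the sense of \eqref{DefRealIntCov}. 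Since $\langle \M,h\rangle$ and $\langle \N,k\rangle$ are real continuous local martingales, this forward-regularization covariation coincides with the classical one $[\langle\M,h\rangle,\langle\N,k\rangle]^{cl}$ (this is the real semimartingale case recalled right after \eqref{DefRealIntCov}). On the other side, by the characterization \eqref{eq:propertyP} in Remark \ref{rm:propertyP}(ii), $[\M,\N]^{cl}(t)(h^*\otimes k^*)=[\langle\M,h\rangle,\langle\N,k\rangle]^{cl}(t)$. Hence the two $(H\hat\otimes_\pi H)$-valued (a fortiori $(H\hat\otimes_\pi H)^{**}$-valued) bounded-variation processes agree on the total set $\{h^*\otimes k^*\}$, and since that set is total in $(H\hat\otimes_\pi H)^*$ and both processes are continuous with values in $(H\hat\otimes_\pi H)\subseteq (H\hat\otimes_\pi H)^{**}$, they coincide.

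The main obstacle I anticipate is the careful bookkeeping around the canonical injection $J$ and the identification of $(H\hat\otimes_\pi H)$ inside its bidual, so that the convergence of the $\epsilon$-approximations in Definition \ref{def:tensorcovariation} (which live in $H\hat\otimes_\pi H$) can legitimately be paired with the test elements $h^*\otimes k^*\in(H\hat\otimes_\pi H)^*$ to produce the scalar statement; here one uses Lemma \ref{lm:che-era-remark} to know that $h^*\otimes k^*$ extends to a genuine element of $(H\hat\otimes_\pi H)^*$ with the expected norm. A secondary point requiring a line of justification is verifying condition \textbf{H1} for the global covariation, which in the form of Remark \ref{rm:H1-casoglobal} amounts to the a.s. boundedness of $\frac1{\epsilon_{n_k}}\int_s^T|\M(r+\epsilon_{n_k})-\M(r)|\,|\N(r+\epsilon_{n_k})-\N(r)|\,\ud r$; by Cauchy--Schwarz this is bounded by the product of the scalar-quadratic-variation approximations of $\M$ and $\N$, each of which converges in probability by Remark \ref{rm:after-definition-chi-D}(1), so a subsequence can be extracted as in Remark \ref{rm:RGlobTens}. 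Everything else is a routine passage to the limit.
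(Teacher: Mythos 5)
Your proposal is correct and follows essentially the same route as the paper's proof: existence of the tensor and global covariations via Remarks \ref{rm:after-definition-chi-D} and \ref{rm:RGlobTens}, reduction to elementary tensors $h^*\otimes k^*$, identification of the $\epsilon$-limits with the real classical covariations $[\langle\M,h\rangle,\langle\N,k\rangle]^{cl}$, and the characterization \eqref{eq:propertyP} of $[\M,\N]^{cl}$. The only cosmetic difference is that the paper makes the passage from elementary tensors to all of $(H\hat\otimes_\pi H)^*$ explicit via the weak-$*$ density Lemma \ref{lemmaTensor}, where you instead observe that both processes take values in $H\hat\otimes_\pi H$ and are separated by the elementary tensors; both justifications are fine.
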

\begin{proof} 
Thanks to Remark \ref{rm:after-definition-chi-D}
 $\M$ and $\N$ admit a  scalar quadratic variation
and $(\M, \N)$ a tensor covariation. 
By Remark \ref{rm:RGlobTens} they admit a global covariation. It is enough to show that they are equal as elements of $(H_1\hat\otimes_\pi H_2)^{**}$,
 so one needs to prove that
\begin{equation}
\label{eq:eq414bis}
[\M, \N]^{\otimes}(\phi)= [\M, \N]^{cl}(\phi),
\end{equation}
for every $\phi\in (H_1 \hat\otimes_\pi H_2)^*$. 

Given $h\in H_1$ and $k\in H_2$, we consider (via Lemma \ref{lm:che-era-remark}) $h^* \otimes k^*$ as an element of $(H_1\hat\otimes_\pi H_2)^*$.
According to Lemma \ref{lemmaTensor} below,
$H_1^*\hat\otimes_\pi H_2^*$ is sequentially dense in $(H_1\hat\otimes_\pi H_2)^*$
 in the weak-* topology.
 Therefore, taking into account item (ii) of Remark \ref{rm:propertyP}
 we only need to show that
\begin{equation}
\label{eq:eq415}
[\M, \N]^{\otimes}(h^*\otimes k^*)= [\langle \M,h \rangle \langle \N, 
k \rangle]^{cl},
\end{equation}
for every $h \in H_1, k \in H_2$.
By the usual properties of Bochner integral the left-hand side of (\ref{eq:eq415}) is the limit of 
\begin{multline}
\frac{1}{\epsilon} \int_s^\cdot (M({r+\epsilon}) - M(r)) \otimes (N({r+\epsilon}) - N(r)) (h^*\otimes k^*) \ud r\\
= \frac{1}{\epsilon} \int_s^\cdot \langle (M({r+\epsilon}) - M(r)),h \rangle\langle \otimes (N({r+\epsilon}) - N(r)),k \rangle \ud r.
\end{multline}
Since $\left\langle \M, h \right\rangle$ and $\left\langle \N, k \right\rangle$ are real local martingales, the covariation $[\langle \M,h \rangle \langle \N, k \rangle]$ exists and equals the classical covariation of local martingales because of Proposition 2.4(3) of \cite{RussoVallois93-Oslo}.
\end{proof}
\begin{Lemma}
 \label{lemmaTensor} 
Let $H_1, H_2$ be two separable Hilbert spaces. Then
$H_1^*\hat\otimes_\pi H_2^*$ is sequentially dense in $(H_1\hat\otimes_\pi H_2)^*$
 in the weak-* topology.
\end{Lemma}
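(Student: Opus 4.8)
The plan is to identify $(H_1 \hat\otimes_\pi H_2)^*$ with $\mathcal{L}(H_1, H_2^*)$ (equivalently with the space of bounded bilinear forms $\shb(H_1,H_2)$, as recalled after Notation \ref{Not1}), and then to show that the image of $H_1^* \hat\otimes_\pi H_2^*$ — which corresponds to the finite-rank operators $H_1 \to H_2^*$ — is weak-$*$ dense. First I would fix an arbitrary $\psi \in (H_1\hat\otimes_\pi H_2)^*$ with associated operator $L_\psi \in \mathcal{L}(H_1, H_2)$ (via Riesz, equivalently $\mathcal{L}(H_1,H_2^*)$) satisfying $\langle L_\psi x, y\rangle = \psi(x\otimes y)$ as in \eqref{eq:expressionLB}. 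A weak-$*$ neighbourhood of $\psi$ is determined by finitely many elements $u_1,\dots,u_m \in H_1\hat\otimes_\pi H_2$ and $\delta>0$, namely $\{\varphi : |\langle \varphi - \psi, u_j\rangle| < \delta, \ j=1,\dots,m\}$; I must produce an element of $H_1^*\hat\otimes_\pi H_2^*$ lying in it.

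The key step is an approximation-by-projections argument. Each $u_j$, being an element of the projective tensor product, can be approximated in the $\pi$-norm (hence uniformly tested against any fixed bounded functional) by a finite sum $\sum_i a_i \otimes b_i$; collecting all the vectors appearing, I obtain finite-dimensional subspaces $V_1 \subseteq H_1$ and $V_2 \subseteq H_2$ such that each $u_j$ is $\pi$-approximated to within $\delta/(3\|\psi\|)$ by an element of $V_1 \otimes V_2$. Let $P_1, P_2$ be the orthogonal projections onto $V_1, V_2$. Then $\psi \circ (P_1 \otimes P_2)$ is a bounded bilinear form that factors through the finite-dimensional space $V_1 \otimes V_2$, so it is represented by a finite-rank operator and therefore corresponds to an element $\varphi$ of $H_1^* \otimes H_2^* \subseteq H_1^*\hat\otimes_\pi H_2^*$. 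For each $j$, writing $u_j = v_j + w_j$ with $v_j \in V_1\otimes V_2$ and $\pi(w_j)$ small, one has $\langle \varphi, v_j\rangle = \langle \psi, v_j\rangle$ exactly (since $P_1\otimes P_2$ is the identity on $V_1\otimes V_2$), while $|\langle \varphi, w_j\rangle| \le \|\psi\|\,\pi(w_j)$ and $|\langle\psi, w_j\rangle| \le \|\psi\|\,\pi(w_j)$ because $\|\varphi\| = \|\psi\circ(P_1\otimes P_2)\| \le \|\psi\|$ as $P_1,P_2$ are contractions. Combining these estimates gives $|\langle \varphi - \psi, u_j\rangle| < \delta$ for every $j$, so $\varphi$ lies in the prescribed neighbourhood.

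The main obstacle is bookkeeping the two approximations cleanly: one is the $\pi$-norm approximation of the test elements $u_j$ by finite tensors (which makes the finite-dimensional subspaces $V_1,V_2$ available), and the other is the observation that compressing $\psi$ by $P_1\otimes P_2$ does not increase its norm and leaves it unchanged on $V_1\otimes V_2$. Once the norm bound $\|\psi\circ(P_1\otimes P_2)\|\le\|\psi\|$ is in hand (immediate from $\|P_i\|\le 1$ and the fact that $\alpha=\pi$ is a reasonable cross-norm, so $\pi((P_1\otimes P_2)u) \le \pi(u)$), the estimate on each $u_j$ follows by the triangle inequality split $u_j = v_j + w_j$. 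I would also note in passing that the same construction shows more, namely that the finite-rank operators are weak-$*$ dense in $\mathcal{L}(H_1,H_2^*)$, which is the operator-theoretic content of the statement; but for the lemma as stated the neighbourhood-by-neighbourhood verification above suffices.
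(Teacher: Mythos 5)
Your proof is correct, but it takes a genuinely different route from the paper's. The paper fixes orthonormal bases $(e_i)$, $(f_i)$ of $H_1$, $H_2$ and, for a given $T\in(H_1\hat\otimes_\pi H_2)^*$, builds a single sequence $T_n$ of finite-rank compressions along the first $n$ basis vectors; it then proves the weak-$*$ convergence $T_n\to T$ by establishing pointwise convergence on elementary tensors (hence on a dense linear subspace $\mathcal{D}$ of $H_1\hat\otimes_\pi H_2$) together with a boundedness estimate, and invoking Banach--Steinhaus to pass from $\mathcal{D}$ to all of $H_1\hat\otimes_\pi H_2$. You instead verify density neighbourhood-by-neighbourhood: given the finitely many test vectors $u_1,\dots,u_m$ defining a weak-$*$ neighbourhood, you tailor finite-dimensional projections $P_1,P_2$ to $\pi$-approximations of those vectors and compress, taking $\varphi=\psi\circ(P_1\otimes P_2)$, which agrees with $\psi$ exactly on $V_1\otimes V_2$ and has norm at most $\|\psi\|$. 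The common mechanism is compression by finite-rank orthogonal projections; what your version buys is that it dispenses with Banach--Steinhaus entirely (the bound $\|\psi\circ(P_1\otimes P_2)\|\le\|\psi\|$ does the work of the uniform boundedness step), while the paper's version produces an explicit approximating \emph{sequence}, i.e. sequential weak-$*$ density, which is marginally more than the lemma asserts. One small correction of attribution: the property of $\pi$ you need is not that it is a reasonable cross-norm but that it is a \emph{uniform} one, i.e. $\pi\left((S\otimes T)u\right)\le\|S\|\,\|T\|\,\pi(u)$; for the projective norm this is immediate from its definition as an infimum over representations, so this is a wording issue rather than a gap.
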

\begin{proof}
Let $(e_i)$ and $(f_i)$ be respectively two orthonormal bases of $H_1$ and $H_2$. We denote  by $\mathcal{D}$ the linear span of finite linear combinations of $e_i \otimes f_i$. Let $T\in (H_1\hat\otimes_\pi H_2)^*$, which is a linear continuous functional on $H_1\hat\otimes_\pi H_2$. Using the identification
 of $(H_1\hat\otimes_\pi H_2)^*$ with ${\mathcal Bi}(H_1, H_2)$, for each $n\in \mathbb{N}$, we define the bilinear form
\[
T_n(a,b) := \sum_{i=1}^{n} \left\langle a, e_i \right\rangle_{H_1}  \left\langle b, f_i \right\rangle_{H_2} T(e_i, f_i).
\]
It defines an element of 
$H_1^*\hat\otimes_\pi H_2^* \subset  (H_1\hat\otimes_\pi H_2)^*$. It remains 
to show that 
\[
\tensor[_{(H_1\hat\otimes_\pi H_2)^*}]{\big\langle T_n, l \big\rangle}{_{H_1\hat\otimes_\pi H_2}} \xrightarrow{n\to\infty} \tensor[_{(H_1\hat\otimes_\pi H_2)^*}]{\big\langle T, l \big\rangle}{_{H_1\hat\otimes_\pi H_2}}, \quad \text{for all $l\in H_1\hat\otimes_\pi H_2$}.
\]
We show now the following.
\begin{itemize}
 \item[(i)] $T_n(a,b) \xrightarrow{n\to\infty} T(a,b)$ for all $a\in H_1$, $b\in H_2$.
 \item[(ii)] For a fixed $l\in H_1\hat\otimes_\pi H_2$, the sequence $T_n(l)$ is bounded.
\end{itemize}
Let us prove first $(i)$. Let $a\in H_1$ and $b\in H_2$. We write
\begin{equation}
\label{eq:TE1}
T_n(a,b) = T \left ( \sum_{i=1}^n \left\langle a, e_i\right\rangle_{H_1} e_i,  \quad \sum_{i=1}^n \left\langle b, f_i\right\rangle_{H_2} f_i \right ).
\end{equation}
Since 
\[
\sum_{i=1}^n \left\langle a, e_i\right\rangle e_i \xrightarrow{n\to +\infty} a \quad \text{in $H_1$},
\]
\[
\sum_{i=1}^n \left\langle b, f_i\right\rangle f_i \xrightarrow{n\to +\infty} b \quad\text{in $H_2$},
\]
and $T$ is a bounded bilinear form the point (i) follows.

Let us prove now $(ii)$. Let $\epsilon>0$ fixed and
 $l_0 \in 
\mathcal{D}$ 
such that $|l-l_0|_{H_1\hat\otimes_\pi H_2} \leq \epsilon$. Then 
\begin{equation}
\label{eq:TE2}
|T_n(l)| \leq |T_n(l-l_0)| + |T_n(l_0)| \leq |T_n|_{(H_1\hat\otimes_\pi H_2)^*} |l-l_0|_{H_1\hat\otimes_\pi H_2} + |T_n(l_0)|.
\end{equation}
So (\ref{eq:TE2})  is bounded by 
\begin{eqnarray*}
\sup_{\vert a \vert_{H_1}, \vert b\vert_{H_2} \le 1} \sum_{i=1}^n 
\vert \left\langle a, e_i\right\rangle e_i \vert_{H_1}
\vert \left\langle b, f_i\right\rangle f_i \vert_{H_2}
|T|_{(H_1\hat\otimes_\pi H_2)^*}  \epsilon + \sup_{n} |T_n(l_0)|
\le |T|_{(H_1\hat\otimes_\pi H_2)^*}  \epsilon + \sup_{n} |T_n(l_0)|,
\end{eqnarray*}
recalling that the sequence $(T_n(l_0))$ is bounded, since it is convergent.
Finally (ii) is also proved. \\
At this point (i) implies that
\[
\tensor[_{(H_1\hat\otimes_\pi H_2)^*}]{\big\langle T_n, l \big\rangle}{_{H_1\hat\otimes_\pi H_2}} \xrightarrow{n\to\infty} \tensor[_{(H_1\hat\otimes_\pi H_2)^*}]{\big\langle T, l \big\rangle}{_{H_1\hat\otimes_\pi H_2}}, \quad \text{for all $l\in \mathcal{D}$}.
\] 
Since $\shd$ is dense in $H_1\hat\otimes_\pi H_2$ , the conclusion follows by 
 Banach-Steinhaus theorem, see Theorem 18, Chapter II in
 \cite{DunfordSchwartz58}.
\end{proof}

We recall the following fact that concerns the 
 classical tensor covariation.
\begin{Lemma}
\label{lm:lemmaDPZ-covariation}
Let $\W_Q$ be a $Q$-Wiener process as in Subsection \ref{sub:Wiener}. Let $\Psi\colon ([s,T] \times \Omega, \mathscr{P}) \to \mathcal{L}_2(U_0,H)$ be a strongly measurable process satisfying condition (\ref{eq:condizione-integr}),
with $X = \Psi$.
 Consider the local martingale
\[
\M(t):= \int_s^t \Psi(r) \ud \W_Q(r).
\]
Then
\[
[\M, \M]^{cl}(t) = \int_s^t g(r) \ud r,
\]
where $g(r)$ is the element of $H\hat\otimes_\pi H$ associated with 
the nuclear operator
$G_g(r):=\Big ( \Psi(r) Q ^{1/2} \Big ) \Big ( \Psi(r) Q^{1/2} \Big )^*$.
\end{Lemma}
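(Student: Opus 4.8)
The plan is to identify the classical tensor covariation $[\M,\M]^{cl}$ via its characterizing property \eqref{eq:propertyP}, namely that for every $h,k\in H$ we must have
\[
[\M,\M]^{cl}(t)(h^*\otimes k^*) = [\langle \M,h\rangle, \langle \M,k\rangle]^{cl}(t),
\]
and then show that the right-hand side coincides with $\left(\int_s^\cdot g(r)\ud r\right)(t)(h^*\otimes k^*)$. First I would use the definition of the stochastic integral with respect to $\W_Q$ together with Proposition \ref{PChainRule}(ii)-(v) to write the real local martingales $\langle\M,h\rangle$ and $\langle\M,k\rangle$ as genuine It\^o integrals with respect to $\W_Q$, namely $\langle\M,h\rangle(t) = \int_s^t \langle \Psi(r)^* h, \ud\W_Q(r)\rangle_U$ (and analogously for $k$), where $\Psi(r)^*$ is understood as a map into $U_0^*$ (or $U_0$ after identification). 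Then the classical real covariation of two such It\^o integrals against the same $Q$-Wiener process is the Lebesgue integral of the $U_0$-inner product of the integrands: $[\langle\M,h\rangle,\langle\M,k\rangle]^{cl}(t) = \int_s^t \langle \Psi(r)^* h, \Psi(r)^* k\rangle_{U_0}\ud r$; this is a standard property of the Da Prato--Zabczyk integral (see \cite{DaPratoZabczyk92} Section 4.4).

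Next I would rewrite the integrand $\langle \Psi(r)^* h, \Psi(r)^* k\rangle_{U_0}$ in terms of the operator $G_g(r) = (\Psi(r)Q^{1/2})(\Psi(r)Q^{1/2})^*$. Indeed, using the isometry $Q^{1/2}\colon U\to U_0$ and the definition of the $U_0$-inner product, one has
\[
\langle \Psi(r)^* h, \Psi(r)^* k\rangle_{U_0} = \langle (\Psi(r)Q^{1/2})^* h, (\Psi(r)Q^{1/2})^* k\rangle_U = \langle (\Psi(r)Q^{1/2})(\Psi(r)Q^{1/2})^* h, k\rangle_H = \langle G_g(r) h, k\rangle_H.
\]
On the other hand, if $g(r)\in H\hat\otimes_\pi H$ is the element associated with the nuclear operator $G_g(r)$ via the correspondence $u\mapsto T_u$ of Section \ref{sec:preliminaries}, then by \eqref{eq:expressionLB} and Lemma \ref{lm:che-era-remark} we have $\langle h^*\otimes k^*, g(r)\rangle = \langle G_g(r)h, k\rangle_H$. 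Hence $[\langle\M,h\rangle,\langle\M,k\rangle]^{cl}(t) = \int_s^t \langle h^*\otimes k^*, g(r)\rangle \ud r = \left(\int_s^t g(r)\ud r\right)(h^*\otimes k^*)$, where the $H\hat\otimes_\pi H$-valued Bochner integral $\int_s^t g(r)\ud r$ is well defined because $r\mapsto |g(r)|_{H\hat\otimes_\pi H} = \mathrm{Tr}\,G_g(r) = \|\Psi(r)\|^2_{\mathcal{L}_2(U_0,H)}$ is integrable by hypothesis \eqref{eq:condizione-integr}. Finally, since $h^*\otimes k^*$ ranges over a family whose linear span $H^*\otimes H^*$ is weak-$*$ dense in $(H\hat\otimes_\pi H)^*$ by Lemma \ref{lemmaTensor}, and both $[\M,\M]^{cl}(t)$ and $\int_s^t g(r)\ud r$ lie in $H\hat\otimes_\pi H$ (hence are continuous functionals on $(H\hat\otimes_\pi H)^*$ that are determined by their action on this dense set), the two processes coincide.

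The main obstacle I anticipate is bookkeeping the various identifications: between $\Psi(r)^*$ as a Hilbert--Schmidt operator into $U_0$ versus into $U$, between the nuclear operator $G_g(r)$ and the tensor element $g(r)$, and between $h\in H$ and $h^*\in H^*$ via Riesz; one must also be careful that the "classical covariation" $[\cdot,\cdot]^{cl}$ in the two-dimensional sense agrees with the real covariation $[\cdot,\cdot]$ in the sense of calculus via regularization, which is guaranteed here because both $\langle\M,h\rangle$ and $\langle\M,k\rangle$ are continuous local martingales (cf. Proposition 2.4(3) of \cite{RussoVallois93-Oslo}, already invoked in the proof of Lemma \ref{lm:L411}). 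Everything else is a direct computation using the properties of the Da Prato--Zabczyk integral recalled in Proposition \ref{PChainRule}.
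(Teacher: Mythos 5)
Your argument is correct, but it is worth noting that the paper does not actually prove this lemma: its entire ``proof'' is the citation \emph{See \cite{DaPratoZabczyk92} Section 4.7}. What you have written is a self-contained derivation of that cited fact, carried out inside the paper's own framework: you reduce the identification of the tensor-valued process $[\M,\M]^{cl}$ to its characterizing property \eqref{eq:propertyP} on elementary tensors $h^*\otimes k^*$, compute the resulting real covariations via the chain rule of Proposition \ref{PChainRule}, and then pass from the operator $G_g(r)$ to the tensor $g(r)$ using the isometry $u\mapsto T_u$ and Lemma \ref{lm:che-era-remark}. All the individual steps check out: the identity $Q^{-1/2}\Psi(r)^*h=(\Psi(r)Q^{1/2})^*h$ behind your computation of $\langle \Psi(r)^*h,\Psi(r)^*k\rangle_{U_0}=\langle G_g(r)h,k\rangle_H$ is right; the Bochner integrability of $r\mapsto g(r)$ follows as you say because $|g(r)|_{H\hat\otimes_\pi H}=\mathrm{Tr}\,G_g(r)=\|\Psi(r)\|^2_{\mathcal{L}_2(U_0,H)}$ (positivity of $G_g(r)$ makes trace and nuclear norm coincide), which is a.s.\ integrable by \eqref{eq:condizione-integr}; and the final separation argument is fine since elements of $H\hat\otimes_\pi H$ are determined by their pairings with $h^*\otimes k^*$ (equivalently, since $u\mapsto T_u$ is an isometric isomorphism onto the nuclear operators). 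The only step you leave implicit is the localization needed because \eqref{eq:condizione-integr} only gives a.s.\ finiteness, so $\M$ is merely a local martingale and the formula $[\langle\M,h\rangle,\langle\M,k\rangle]^{cl}(t)=\int_s^t\langle\Psi(r)^*h,\Psi(r)^*k\rangle_{U_0}\,\ud r$ should first be established for the stopped processes; this is routine and consistent with your appeal to Proposition \ref{PChainRule}. In short: same mathematics as the reference the authors point to, but your version has the advantage of making the identification between the nuclear operator $G_g$ and the tensor element $g$ explicit, which the paper needs later (e.g.\ in Corollary \ref{lm:Ito-pre} via Proposition \ref{PPTTT}) and never spells out.
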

\begin{proof}
See \cite{DaPratoZabczyk92} Section 4.7.
\end{proof}


\bigskip


\begin{Lemma}
\label{lm:Prop3}
Let $\M\colon [s,T] \times \Omega \to H$ be a continuous local martingale and $\Z$ a  measurable process from $([s,T]\times \Omega, \mathscr{P})$ to $H$ and such that $\int_s^T \| \Z(r) \|^2 \ud [\M]^{\mathbb{R}, cl}(r) < +\infty$ $a.s$. Of course $\Z$  can be
Riesz-identified with an element of $\mathcal{J}^2(s,T; H^*,\mathbb{R})$.
 We define
\begin{equation}
\label{eq:defX-perlemma}
X(t) := \int_s^t \left\langle \Z(r) , \ud \M(r)\right\rangle.
\end{equation}
Then, $X$ is a real continuous local martingale and for every continuous real local martingale $N$, the (classical, one-dimensional) covariation process $[X,N]^{cl}$ is given by
\begin{equation}
\label{eq:[XN]-perlemma}
[X,N]^{cl}(t) = \int_s^t \left\langle \Z(r) , \ud [\M,N]^{cl}(r)\right\rangle;
\end{equation}
in particular the integral in the right-side is well-defined.
\end{Lemma}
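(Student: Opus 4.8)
The plan is to reduce the statement to already-established properties of the Hilbert-valued It\^o integral with respect to a local martingale, together with the characterization (\ref{eq:propertyP2}) of the classical tensor covariation $[\M,N]^{cl}$. First I would observe that, since $\Z$ is strongly measurable from $([s,T]\times\Omega,\mathscr{P})$ to $H$ and satisfies $\int_s^T \|\Z(r)\|^2 \ud [\M]^{\mathbb{R},cl}(r) < +\infty$ a.s., the Riesz-identified process belongs to $\mathcal{J}^2(s,T;H^*,\mathbb{R})$, so the integral $X(t) = \int_s^t \langle \Z(r), \ud \M(r)\rangle$ is well-defined; by Proposition \ref{PChainRule}(i) it is a real continuous local martingale. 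This disposes of the first assertion.

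For the covariation formula, the idea is to verify (\ref{eq:[XN]-perlemma}) first in the elementary case where $\Z$ is a simple (step) process of the form $\Z(r) = \sum_j h_j I_{(t_j,t_{j+1}]}(r)$ with $h_j$ bounded $\mathscr{F}^s_{t_j}$-measurable $H$-valued random variables, and then pass to the limit. In the step case, $X(t) = \sum_j \langle h_j, \M(t\wedge t_{j+1}) - \M(t\wedge t_j)\rangle$ is a finite sum of stopped real martingales of the form $\langle h_j,\M(\cdot)\rangle$, so by bilinearity of the real covariation and the stopping properties of $[\cdot,\cdot]^{cl}$, one computes $[X,N]^{cl}(t) = \sum_j \langle h_j, ([\M,N]^{cl}(t\wedge t_{j+1}) - [\M,N]^{cl}(t\wedge t_j))\rangle = \int_s^t \langle \Z(r), \ud [\M,N]^{cl}(r)\rangle$, where I use exactly the characterization (\ref{eq:propertyP2}): $[\langle \M,h\rangle, N]^{cl}(t) = [\M,N]^{cl}(t)(h^*)$, i.e. the real covariation $[\langle\M,h\rangle,N]^{cl}$ is obtained by pairing the $H\hat\otimes_\pi\mathbb{R}\equiv H$-valued bounded variation process $[\M,N]^{cl}$ with $h$. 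This reduces the step case to an identity between Bochner integrals of bounded variation processes paired against a fixed vector.

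To pass from step processes to a general $\Z \in \mathcal{J}^2(s,T;H^*,\mathbb{R})$, I would first localize via a suitable sequence of stopping times (in the sense of Definition \ref{def:suitable}) so that one may assume $\mathbb{E}\int_s^T\|\Z(r)\|^2\ud[\M]^{\mathbb{R},cl}(r) < +\infty$, reducing to $\mathcal{I}_\M(s,T;\mathbb{R})$, and similarly localize $N$ so that it is an $L^2$ martingale. Then choose step processes $\Z_n \to \Z$ in $\mathcal{I}_\M(s,T;\mathbb{R})$; by the contraction property of the stochastic integral, $X_n := \int_s^\cdot\langle\Z_n,\ud\M\rangle \to X$ in $\mathcal{M}^2(s,T;\mathbb{R})$. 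One then invokes continuity of the (real) covariation operator with respect to this convergence — that is, $[X_n,N]^{cl} \to [X,N]^{cl}$ ucp when $X_n \to X$ in $\mathcal{M}^2$ and $N$ is a fixed $L^2$ martingale, via the Kunita--Watanabe inequality $\mathbb{E}\,\mathrm{Var}_{[s,T]}([X_n-X,N]^{cl}) \le (\mathbb{E}[X_n-X]^{cl}_T)^{1/2}(\mathbb{E}[N]^{cl}_T)^{1/2}$. On the right-hand side, $\int_s^\cdot\langle\Z_n,\ud[\M,N]^{cl}\rangle$ converges to $\int_s^\cdot\langle\Z,\ud[\M,N]^{cl}\rangle$ because $[\M,N]^{cl}$ has finite variation (as the tensor covariation of continuous local martingales, by Lemma \ref{lm:L411} and Remark \ref{rm:propertyP}) and, via Kunita--Watanabe applied to its total variation measure controlled by $\ud[\M]^{\mathbb{R},cl}$, the difference is dominated in $L^1$ by $|\Z_n-\Z|$ in the relevant norm; this simultaneously shows the right-hand integral is well-defined.

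\textbf{Main obstacle.} The routine part is the step-process computation; the delicate point is the limiting argument for the right-hand side, namely making precise that the $H$-valued bounded variation process $[\M,N]^{cl}$ admits a total variation measure dominated (after Cauchy--Schwarz / Kunita--Watanabe) by $\ud [\M]^{\mathbb{R},cl}$, so that $\Z \mapsto \int_s^\cdot\langle\Z,\ud[\M,N]^{cl}\rangle$ is continuous for the $\mathcal{I}_\M$-norm; establishing this domination carefully (and checking the integral is well-defined for every $\Z\in\mathcal{J}^2$, not merely for step processes) is where the real work lies, whereas the left-hand side convergence is standard once $X_n\to X$ in $\mathcal{M}^2$.
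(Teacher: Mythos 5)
Your proposal is correct and follows essentially the same route as the paper: the paper also establishes the local martingale property by citing the same result behind Proposition \ref{PChainRule}(i), localizes to the square-integrable case via suitable stopping times, and then derives (\ref{eq:[XN]-perlemma}) from the characterization (\ref{eq:propertyP2}); the step-process approximation and the Kunita--Watanabe domination that you spell out are exactly what the paper delegates to its citation of Meyer. Your version is simply a self-contained expansion of that citation, and your identification of the vector-valued Kunita--Watanabe control of the total variation of $[\M,N]^{cl}$ as the genuinely delicate point is accurate.
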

\begin{proof}
The fact that $X$ is a local martingale is part of the result of Theorem 2.14 in \cite{KrylovRozovskii07}. For the other claim we can reduce, using a sequence of suitable stopping times as in the proof of Theorem \ref{th:integral-forrward=ito-martingale}, to the case in which $\Z$, $\M$ and $N$ are square integrable martingales.
Taking into account the characterization (\ref{eq:propertyP2}) and the discussion developed in  \cite{Meyer77}, page 456, (\ref{eq:[XN]-perlemma}) follows.
\end{proof}


\begin{Proposition}
\label{pr:thA}
If $\M\colon [s,T] \times \Omega \to H$ and $N\colon [s,T] \times \Omega \to \mathbb{R}$ are continuous local martingales. Then
$\M$ and $N$ admit a global covariation and $\widetilde{[\M,N]}=[\M,N]^{cl}$.
\end{Proposition}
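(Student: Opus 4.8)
The plan is to deduce the statement from Proposition \ref{pr:1}, applied with $\X = \M$ and $Y = N$, taking $H_1 = H$ and $H_2 = \mathbb{R}$, so that $(H_1\hat\otimes_\pi H_2)^*$ is identified with $H^*$ and, via Riesz, with $H$. The natural candidate for the global covariation is the classical tensor covariation $[\M,N]^{cl}$, which by item (iii) of Remark \ref{rm:propertyP} is a continuous $H$-valued process satisfying $[\M,N]^{cl}(t)(h^*) = [\langle \M,h\rangle, N]^{cl}(t)$ for every $h\in H$, $h^*$ being associated to $h$ through Riesz.

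First I would check hypothesis (a) of Proposition \ref{pr:1}. By the Cauchy--Schwarz inequality with respect to the measure $\tfrac{dr}{\epsilon}$ on $[s,T]$,
\begin{multline*}
\frac{1}{\epsilon}\int_s^T |\M(r+\epsilon) - \M(r)|\,|N(r+\epsilon) - N(r)|\,dr \\
\le \Big(\int_s^T \frac{|\M(r+\epsilon) - \M(r)|^2}{\epsilon}\,dr\Big)^{1/2}\Big(\int_s^T \frac{|N(r+\epsilon) - N(r)|^2}{\epsilon}\,dr\Big)^{1/2}.
\end{multline*}
The first factor converges in probability to $[\M,\M]^{\mathbb{R}}(T)^{1/2}$, because a continuous $H$-valued local martingale admits a scalar quadratic variation (Remark \ref{rm:after-definition-chi-D}, together with Remark \ref{R15} for the convergence in probability); the second factor converges in probability because $N$, being a real continuous local martingale, is a finite quadratic variation process. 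Hence the right-hand side, which we take as $A_\epsilon$, converges in probability, and (a) holds.

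Next I would check hypothesis (b). Fix $h\in H$ and observe that $\langle h, \M(r+\epsilon) - \M(r)\rangle = \langle\M,h\rangle(r+\epsilon) - \langle\M,h\rangle(r)$, where $\langle\M,h\rangle$ is a real continuous local martingale. Therefore
\[
\frac{1}{\epsilon}\int_s^t \langle h, \M(r+\epsilon) - \M(r)\rangle\,(N(r+\epsilon) - N(r))\,dr
\]
is precisely the $\epsilon$-approximation of the one-dimensional covariation $[\langle\M,h\rangle, N]$; by the same argument used in the proof of Lemma \ref{lm:L411} (Proposition 2.4(3) of \cite{RussoVallois93-Oslo}), it converges ucp, as $\epsilon\to 0$, to the classical covariation $[\langle\M,h\rangle, N]^{cl}$. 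Setting $\tilde C := [\M,N]^{cl}$, which is a continuous $H$-valued process, the characterization (\ref{eq:propertyP2}) gives $\langle \tilde C(t), h\rangle = [\M,N]^{cl}(t)(h^*) = [\langle\M,h\rangle, N]^{cl}(t) = C(t)(h)$ for all $t\in[s,T]$ and $h\in H$, so (b) holds with this $\tilde C$. Proposition \ref{pr:1} then yields that $\M$ and $N$ admit a global covariation with $\widetilde{[\M,N]} = \tilde C = [\M,N]^{cl}$, which is the assertion. I expect the main, and rather mild, obstacle to be the bookkeeping of the identifications $(H\hat\otimes_\pi\mathbb{R})^*\simeq H^*\simeq H$ and the reduction, by testing against a fixed $h\in H$, to the scalar covariation $[\langle\M,h\rangle,N]$; no genuinely hard estimate is involved.
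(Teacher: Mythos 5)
Your proposal is correct and follows essentially the same route as the paper: both verify conditions (a) and (b) of Proposition \ref{pr:1}, with (a) obtained from Cauchy--Schwarz and the existence of the scalar quadratic variations of $\M$ and $N$, and (b) reduced via the characterization (\ref{eq:propertyP2}) to the scalar covariation $[\langle\M,h\rangle,N]$, handled by Proposition 2.4(3) of \cite{RussoVallois93-Oslo}. The only cosmetic difference is that the paper names $\tilde C$ through the explicit expression (\ref{eq:defNM}) while you take $\tilde C=[\M,N]^{cl}$ directly; these coincide by Remark \ref{rm:propertyP}, and your statement of the Cauchy--Schwarz bound (with the square roots) is in fact the more carefully written one.
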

\begin{proof}
We have to check the conditions stated in Proposition \ref{pr:1} 
 for $\tilde C$ equal to the right side of (\ref{eq:defNM}). Concerning (a),
 by Cauchy-Schwarz inequality we have
\[
\frac{1}{\epsilon} \int_s^T |N(r+\epsilon) - N(r)| |\M(r+\epsilon) - \M(r)| \ud r \leq [N,N]^{\epsilon, \mathbb{R}} [\M,\M]^{\epsilon, \mathbb{R}}.
\]
Since both $N$ and $\M$ are local martingales they admit a scalar quadratic variation (as recalled in Remark \ref{rm:after-definition-chi-D}), the result is established. Concerning (b), taking into account (\ref{eq:propertyP2}), we need to prove that for any $h\in H$
\begin{equation}
\label{eq:FIF}
\lim_{\epsilon \to 0} \frac{1}{\epsilon} \int_s^\cdot ( M^h(r+\epsilon) - M^h(r)) (N(r+\epsilon) - N(r)) \ud r = [\left\langle M, h \right\rangle, N]^{cl}
\end{equation}
ucp, where $M^h$ is the real local martingale $\langle \M, h \rangle$. (\ref{eq:FIF}) follows by Proposition 2.4(3) of \cite{RussoVallois93-Oslo}.
\end{proof}

\subsection{$\chi$-Dirichlet and $\nu$-weak Dirichlet processes}

We have now at our disposal all the elements we need to introduce the concept of $\chi$-Dirichlet process and $\nu$-weak Dirichlet process.
\begin{Definition}
\label{def:chi-Dirichlet-process}
Let $\chi\subseteq (H\hat\otimes_{\pi} H)^*$ be a Chi-subspace.
A continuous $H$-valued process $\X \colon ([s,T] \times \Omega,\mathscr{P}) \to H$ is called \emph{$\chi$-Dirichlet process} if there exists a decomposition $\X = \M +\A$ where
\begin{itemize}
 \item[(i)] $\M$ is a continuous local martingale,
 \item[(ii)] $\A$ is a continuous $\chi$-zero quadratic variation process,
 \item[(iii)] $\A(0)=0$.
\end{itemize}
\end{Definition}


\begin{Definition} \label{DNuDir}
Let $H$ and $H_1$ be two separable Hilbert spaces.
Let $\nu\subseteq (H\hat\otimes_{\pi} H_1)^*$ be a Chi-subspace. 
A continuous 
adapted $H$-valued process $\A \colon [s,T] \times \Omega \to H$ is said to be \emph{$\mathscr{F}^s_t$-$\nu$-martingale-orthogonal} if 
\[
[ \A, \N ]_\nu=0,
\]
for any $H_1$-valued continuous local martingale $\N$.
\end{Definition}

As we have done for the expressions ``stopping time'', 
''adapted'', ``predictable''... since we  always use the 
filtration $\mathscr{F}^s_t$, we simply write 
\emph{$\nu$-martingale-orthogonal} instead of $\mathscr{F}^s_t$-$\nu$-martingale-orthogonal.

\begin{Definition}
\label{def:chi-weak-Dirichlet-process}
Let $H$ and $H_1$ be two separable Hilbert spaces.
Let $\nu\subseteq (H\hat\otimes_{\pi} H_1)^*$ be a Chi-subspace. 
A continuous $H$-valued process $\X \colon [s,T] \times \Omega \to H$ is called \emph{$\nu$-weak-Dirichlet process} if it
is adapted and there exists a decomposition $\X = \M +\A$ where
\begin{itemize}
 \item[(i)] $\M$ is  an  $H$-valued continuous local martingale,
 \item[(ii)] $\A$ is an $\nu$-martingale-orthogonal process,
 \item[(iii)] $\A(0)=0$.
\end{itemize}
\end{Definition}
\begin{Remark} \label{R421}
The sum of two $\nu$-martingale-orthogonal processes is
again a $\nu$-martingale-orthogonal process.
\end{Remark}
\begin{Proposition} \label{P421}
\begin{enumerate}
\item Any process admitting a zero scalar quadratic variation
(for instance a bounded variation process) is
a $\nu$-martingale-orthogonal process.
\item Let $\Q$ be an equivalent probability to $\P$.
Any $\nu$-weak Dirichlet process under $\P$ 
is a  $\nu$-weak Dirichlet process under $\Q$. 
\end{enumerate}
\end{Proposition}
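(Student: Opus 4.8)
The plan is to prove the two items separately, both being rather direct consequences of the definitions and of earlier results in the paper.

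For item 1, I would start from a process $\A$ with zero scalar quadratic variation and show that $[\A,\N]_\nu = 0$ for every $H_1$-valued continuous local martingale $\N$. By item 2 of Remark \ref{RDGR}, it suffices to check that the quantity $A(\varepsilon)$ of \eqref{Aepsilon}, computed with $\chi = \nu$, $\X = \A$ and $\Y = \N$, converges to zero in probability. Using Remark \ref{RTensBanach} together with the continuous embedding $\nu \hookrightarrow (H\hat\otimes_\pi H_1)^*$, one bounds
\[
\left| J\left( (\A(r+\epsilon)-\A(r)) \otimes (\N(r+\epsilon)-\N(r)) \right) \right|_{\nu^*} \leq C \, |\A(r+\epsilon)-\A(r)|_H \, |\N(r+\epsilon)-\N(r)|_{H_1},
\]
so that by Cauchy--Schwarz
\[
A(\varepsilon) \leq C \left( \int_s^T \frac{|\A(r+\epsilon)-\A(r)|_H^2}{\epsilon}\,\ud r \right)^{1/2} \left( \int_s^T \frac{|\N(r+\epsilon)-\N(r)|_{H_1}^2}{\epsilon}\,\ud r \right)^{1/2}.
\]
The first factor tends to $0$ in probability since $\A$ has zero scalar quadratic variation, while the second factor converges in probability because $\N$, being a continuous local martingale, admits a scalar quadratic variation (Remark \ref{rm:after-definition-chi-D}, item 1). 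Hence $A(\varepsilon) \to 0$ in probability, which gives $[\A,\N]_\nu = 0$; since $\A$ is moreover continuous and adapted, it is $\nu$-martingale-orthogonal. The parenthetical remark on bounded variation processes is then immediate from Proposition \ref{RBVQV}.

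For item 2, let $\X = \M + \A$ be the $\nu$-weak Dirichlet decomposition under $\P$ and let $\Q \sim \P$. I would not attempt to produce the $\Q$-decomposition by the Girsanov-type change of drift; instead, I observe that the two conditions defining a $\nu$-weak Dirichlet process are stable under an equivalent change of measure essentially because convergence in probability and the local martingale property behave well. A continuous $\P$-local martingale $\M$ need not be a $\Q$-local martingale, so one writes $\M = \M^\Q + \B$ where $\M^\Q$ is a continuous $\Q$-local martingale and $\B$ is a continuous bounded variation process (this is the Girsanov decomposition, valid since $\Q \sim \P$ and $\M$ is continuous). Then $\X = \M^\Q + (\A + \B)$ under $\Q$; by item 1, $\B$, having zero scalar quadratic variation, is $\nu$-martingale-orthogonal under $\Q$, and by Remark \ref{R421} it suffices to know that $\A$ is $\nu$-martingale-orthogonal under $\Q$. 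For the latter, one checks $[\A, \N]_\nu = 0$ under $\Q$ for every $\Q$-continuous local martingale $\N$: writing again $\N = \N^\P + \C$ with $\N^\P$ a $\P$-local martingale and $\C$ of bounded variation, one uses bilinearity of the $\nu$-covariation to reduce to $[\A,\N^\P]_\nu = 0$ (which holds under $\P$) and $[\A,\C]_\nu = 0$ (which holds by item 1), plus the fact that the defining limits in \eqref{FDefC} and \eqref{eq:def-chi-epsilon} are convergences in probability, hence unchanged when passing to the equivalent measure $\Q$.

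The main obstacle will be item 2: one must be careful that the $\nu$-covariation $[\A,\N]_\nu$ — with its two-part definition via conditions \textbf{H1} and \textbf{H2} — is genuinely preserved under the equivalence of measures. Condition \textbf{H1} involves a.s.\ finiteness of a supremum over a subsequence, and \textbf{H2} involves ucp convergence; both are invariant under $\Q \sim \P$, but the subsequence extraction in \textbf{H1} must be handled uniformly. The cleanest route is probably to invoke Remark \ref{RDGR} throughout, reducing every verification to convergence in probability of the single scalar quantity $A(\varepsilon)$, which is manifestly measure-equivalence-invariant, and to combine this with the Girsanov decomposition of $\P$-local martingales into $\Q$-local martingales plus bounded variation terms.
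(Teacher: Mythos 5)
Your item~1 is correct and rests on the same Cauchy--Schwarz estimate as the paper's argument; the paper routes it through Proposition~\ref{PFQVZ} (after observing that a continuous local martingale has a global, hence tensor-type, quadratic variation), whereas you carry out the estimate directly from Remark~\ref{RDGR} and the continuous embedding $\nu\hookrightarrow(H\hat\otimes_\pi H_1)^*$. Your direct version is fine, and it has the small advantage of applying verbatim to an arbitrary Chi-subspace $\nu$ rather than one of tensor-product type.

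For item~2 your overall strategy coincides with the paper's (Girsanov decomposition $\M=\M^\Q+\B$ with $\B$ of bounded variation, absorption of $\B$ into the orthogonal part via item~1 and Remark~\ref{R421}, invariance of convergence in probability under $\Q\sim\P$), and you correctly isolate the real difficulty, namely that the class of local martingales changes with the measure --- a point the paper passes over rather quickly. However, your resolution of the last sub-step is not justified: you claim $[\A,\C]_\nu=0$ ``by item~1'', where $\A$ is the $\P$-orthogonal part and $\C$ is the bounded variation part of the $\Q$-local martingale $\N=\N^\P+\C$. Item~1 asserts that a \emph{zero scalar quadratic variation} process has vanishing $\nu$-covariation with a \emph{local martingale}; here $\C$ is not a local martingale and $\A$ is not assumed to have zero (or even finite) scalar quadratic variation, so neither slot matches and the Cauchy--Schwarz bound underlying item~1 is unavailable. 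The claim is nevertheless true, but it needs its own (easy) argument exploiting the continuity of $\A$ and the bounded variation of $\C$: denoting by $V$ the (increasing) total variation function of $\C$ and using Notation~\ref{Not00},
\[
\frac{1}{\epsilon}\int_s^T |\A(r+\epsilon)-\A(r)|_H\,|\C(r+\epsilon)-\C(r)|_{H_1}\,\ud r
\;\le\; \Bigl(\sup_{|r'-r|\le\epsilon}|\A(r')-\A(r)|_H\Bigr)\,\frac{1}{\epsilon}\int_s^T \bigl(V(r+\epsilon)-V(r)\bigr)\,\ud r,
\]
and the right-hand side is bounded by the modulus of continuity of $\A$ on $[s,T]$ times $\bigl(V(T)-V(s)\bigr)$, which tends to $0$ a.s.; combined with the bound $|J(a\otimes b)|_{\nu^*}\le C\,|a|_H\,|b|_{H_1}$ and Remark~\ref{RDGR} this gives $[\A,\C]_\nu=0$. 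With that substitution your proof of item~2 closes.
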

\begin{proof}
1. follows from Proposition \ref{PFQVZ} 1. setting
$\nu_1 = H_1^*$. In fact, any local martingale has 
a global quadratic variation because of Remark \ref{rm:RGlobTens} and Remark
 \ref{rm:after-definition-chi-D}. So it has a 
 ${\nu_1 \hat\otimes_\pi\nu_1 }$-quadratic variation by Remark  
\ref{Rlm:era52} and Remark \ref{rm:RChiGlob}.
\\
Concerning 2., by Theorem in Section 30.3, page 208 of \cite{Metivier82},
a local martingale under $\P$ is a local martingale under
$\Q$ plus a bounded variation process. 
The result
follows 
 by Proposition \ref{RBVQV} item 1. and Remark \ref{R421} since the $\nu$-covariation
 remains unchanged under
an equivalent probability measure.
\end{proof}

We recall that the decomposition of a real weak Dirichlet process is
 unique, see Remark 3.5 of \cite{GozziRusso06}. 
For the infinite dimensional case we now establish the uniqueness of 
the decomposition of a $\nu$-weak-Dirichlet process in two cases:
 when $H_1 = H$ and when $H_1=\mathbb{R}$.
\begin{Proposition} \label{PDenseUnique}
Let $\nu\subseteq (H\hat\otimes_{\pi} H)^*$ be a Chi-subspace. Suppose that
 $\nu$ is dense in $(H\hat\otimes_\pi H)^*$. Then any decomposition of
 a $\nu$-weak-Dirichlet process $\X$ is unique.
\end{Proposition}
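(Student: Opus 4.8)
The plan is to reduce the uniqueness of the decomposition to the following claim: a continuous $H$-valued local martingale $\M$ which is simultaneously $\nu$-martingale-orthogonal and satisfies $\M(0)=0$ must be identically zero. Indeed, suppose $\X=\M_1+\A_1=\M_2+\A_2$ are two decompositions as in Definition \ref{def:chi-weak-Dirichlet-process}. Set $\M:=\M_1-\M_2$ and $\A:=\A_2-\A_1$. Then $\M$ is a continuous local martingale, $\M=\A$, and by Remark \ref{R421} (together with the trivial fact that $-\A_1$ is again $\nu$-martingale-orthogonal) $\A$ is $\nu$-martingale-orthogonal; moreover $\A(0)=0$, hence $\M(0)=0$. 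Proving $\M\equiv 0$ then gives $\M_1=\M_2$ and $\A_1=\A_2$.

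To prove the claim I would first test the $\nu$-martingale-orthogonality of $\M$ against $\N=\M$ itself, obtaining $[\M,\M]_\nu=0$. On the other hand, being a continuous local martingale, $\M$ admits a scalar and a tensor quadratic variation (Remark \ref{rm:after-definition-chi-D}), hence a global covariation (Remark \ref{rm:RGlobTens}), and by Lemma \ref{lm:L411} one has $\widetilde{[\M,\M]}=[\M,\M]^{cl}$, the classical tensor quadratic variation, an $H\hat\otimes_\pi H$-valued bounded variation process. Since the global covariation exists, Remark \ref{rm:RChiGlob} yields $[\M,\M]_\nu(\phi)=[\M,\M](\phi)$ for every $\phi\in\nu$. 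Combining these facts, for every $\phi\in\nu$ and every $t\in[s,T]$,
\[
\big\langle J\big([\M,\M]^{cl}(t)\big),\phi\big\rangle = 0 \qquad \mathbb{P}\text{-a.s.},
\]
where $J$ is the canonical embedding of $H\hat\otimes_\pi H$ into its bidual.

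To finish I would pass from arbitrary $\phi\in\nu$ to the real-valued components of $\M$, using density. Fix a countable dense set $\{h_j\}_{j\in\mathbb{N}}\subseteq H$. For each $j$, the functional $h_j^*\otimes h_j^*$ belongs to $H^*\hat\otimes_\pi H^*\subseteq (H\hat\otimes_\pi H)^*$ (Remark \ref{Rlm:era52}, Lemma \ref{lm:che-era-remark}), and since $\nu$ is dense in $(H\hat\otimes_\pi H)^*$ it can be approximated in the $(H\hat\otimes_\pi H)^*$-norm by a sequence $(\phi_{j,n})_n$ in $\nu$; as $J\big([\M,\M]^{cl}(t)\big)$ is a bounded linear functional on $(H\hat\otimes_\pi H)^*$, we obtain, outside a single $\mathbb{P}$-null set (the union over the countably many pairs $(j,n)$),
\[
[\langle\M,h_j\rangle,\langle\M,h_j\rangle]^{cl}(t)=\big\langle J\big([\M,\M]^{cl}(t)\big),h_j^*\otimes h_j^*\big\rangle=\lim_{n}\big\langle J\big([\M,\M]^{cl}(t)\big),\phi_{j,n}\big\rangle=0
\]
for all $t$ and all $j$, the first equality being the characterization (\ref{eq:propertyP}) of $[\M,\M]^{cl}$. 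Hence each real continuous local martingale $\langle\M,h_j\rangle$ has zero quadratic variation, therefore is a.s. constant, equal to $\langle\M(0),h_j\rangle=0$. By density of $\{h_j\}$ and continuity of $\M$, this forces $\M(t)=0$ for all $t$, $\mathbb{P}$-a.s.

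The main obstacle, and the reason for descending to the real components rather than arguing directly in $(H\hat\otimes_\pi H)^*$, is the bookkeeping of exceptional null sets: the identity $\langle J([\M,\M]^{cl}(t)),\phi\rangle=0$ holds a.s. only for each fixed $\phi$, whereas $(H\hat\otimes_\pi H)^*\cong\mathcal{L}(H,H)$ is non-separable; testing only against the countable family $\{h_j^*\otimes h_j^*\}$, each approximated by a countable sequence in $\nu$, keeps the entire argument inside one null set.
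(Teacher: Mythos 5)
Your proof is correct and follows the same overall route as the paper: reduce to showing that a continuous local martingale $\M$ which is also $\nu$-martingale-orthogonal and null at $s$ vanishes, deduce $[\M,\M]_\nu=0$ from the orthogonality, upgrade this to the full dual $(H\hat\otimes_\pi H)^*$ by density, and conclude via the characterization $[\M,\M]^{cl}(h^*\otimes k^*)=[\langle\M,h\rangle,\langle\M,k\rangle]^{cl}$ that each real component is a zero-quadratic-variation local martingale, hence zero. The one place where you genuinely diverge is the density step: the paper introduces the family of $\epsilon$-approximation operators $[\M,\M]^\epsilon\colon(H\hat\otimes_\pi H)^*\to\mathcal{C}([s,T])$ and invokes Banach--Steinhaus to get boundedness of the limit $[\M,\M]$ before letting density kill it, whereas you exploit the fact that Lemma \ref{lm:L411} already identifies $\widetilde{[\M,\M]}(t)=[\M,\M]^{cl}(t)$ with an element of $H\hat\otimes_\pi H\hookrightarrow(H\hat\otimes_\pi H)^{**}$, i.e.\ with an a priori bounded functional, so that vanishing on the dense subspace $\nu$ extends by plain norm continuity. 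Your version is slightly more economical (no uniform boundedness principle needed) and your explicit handling of the exceptional null sets via a countable dense family $\{h_j\}$ and countably many approximants $\phi_{j,n}\in\nu$ is careful and correct; the paper's version is marginally more self-contained in that it does not rely on the representability of the limit as a bidual element before the density argument is run. Both arguments are valid.
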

\begin{proof}
Assume that $\X=\M^1+\A^1 = \M^2 + \A^2$ are two decompositions where $\M^1$ 
and $\M^2$ are continuous local martingales and $\A^1, \A^2$ are $\nu$-martingale-orthogonal processes. If we call $\M:= \M^1- \M^2$ and $\A:= \A^1-\A^2$ we have $0=\M+\A$. 

By Lemma \ref{lm:L411},
 $\M$ has a global quadratic variation. 
In particular it also has a $\nu$-quadratic variation and,
 thanks to the bilinearity  of the $\nu$-covariation,
\[
0 = [\M,0]_\nu = [\M,\M+\A]_\nu = [\M,\M]_\nu +  [\M,\A]_\nu = [\M,\M]_\nu + 0 =[\M,\M]_\nu.
\]
We prove now that $\M$ has also zero global quadratic variation. 
We have denoted  by  $\mathcal{C}([s,T])$ the space of the real continuous processes defined on $[s,T]$. We introduce, for $\epsilon >0$, the operators
\begin{equation}
\label{eq:peremailaFrancesco}
\left \{
\begin{array}{l}
\displaystyle
[\M,\M]^\epsilon \colon  (H\hat\otimes_\pi H)^* \to \mathcal{C}([s,T])\\[5pt]
\displaystyle
([\M,\M]^\epsilon (\phi))(t) := \frac{1}{\epsilon} \int_s^t \tensor[_{(H\hat\otimes_\pi H)}]{\left\langle (\M_{r+\epsilon} - \M_r)\otimes^2, \phi \right\rangle}{_{(H\hat\otimes_\pi H)^*}} \ud r.
\end{array}
\right .
\end{equation}
Observe the following
\begin{itemize}
\item[(a)] $[\M,\M]^\epsilon$ are linear and bounded operators.
\item[(b)] For $\phi\in (H\hat\otimes_\pi H)^*$ the limit $[\M,\M](\phi):= \lim_{\epsilon\to 0} [\M,\M]^\epsilon (\phi)$ exists.
\item[(c)] If $\phi\in \nu$ we have $[\M,\M](\phi)=0$.
\end{itemize}
Thanks to (a) and (b) and Banach-Steinhaus theorem (see Theorem 17, Chapter II in \cite{DunfordSchwartz58}) we know that $[\M,\M]$ is linear and bounded. Thanks to (c) and the fact that the inclusion 
$\nu \subseteq  (H\hat\otimes_\pi H)^*$ is dense, it follows $[\M,\M]=0$. 
By Lemma \ref{lm:L411} $[\M, \M]$ coincides with the classical quadratic variation $[\M, \M]^{cl}$ and it is characterized by
\[
0 = [\M, \M]^{cl} (h^*,k^*) = [\langle \M , h \rangle, \langle \M , k \rangle]^{cl},
\]
by Remark 
\ref{rm:propertyP} (ii).
Since $\M(0)=0$ and therefore $\langle \M , h \rangle(0)=0$ it follows that $\langle \M , h \rangle\equiv 0$ for any $h\in H$. Finally $\M\equiv 0$,
which concludes the proof.
\end{proof}

\begin{Proposition}
\label{pr:421}
Let $H$ be a separable Hilbert space.
Let $\nu\subseteq H^*\equiv (H\hat\otimes_\pi \mathbb{R})^*$ be a Banach space with continuous and dense inclusion. Then any decomposition of a $\nu$-weak-Dirichlet process $\X$ with values in $H$ is unique.
\end{Proposition}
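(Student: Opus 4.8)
The plan is to mimic the proof of Proposition \ref{PDenseUnique}, but with $H_1 = \mathbb{R}$ in place of $H_1 = H$. As there, suppose $\X = \M^1 + \A^1 = \M^2 + \A^2$ are two decompositions with $\M^1, \M^2$ continuous $H$-valued local martingales and $\A^1, \A^2$ being $\nu$-martingale-orthogonal. Setting $\M := \M^1 - \M^2$ and $\A := \A^1 - \A^2$ we get $\M + \A = 0$, and $\A$ is $\nu$-martingale-orthogonal by Remark \ref{R421} (the difference being a sum with the obviously $\nu$-martingale-orthogonal process $-\A^2$). The goal is to show $\M \equiv 0$, which forces $\A \equiv 0$ as well.

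First I would note that, by Proposition \ref{pr:thA}, $\M$ and any real continuous local martingale $N$ admit a global covariation with $\widetilde{[\M, N]} = [\M, N]^{cl}$; in particular $\M$ and $N$ admit a $\nu$-covariation, and $[\M, N]_\nu(\phi) = \,_\nu\langle \widetilde{[\M, N]}, \phi\rangle_{\nu^*} = [\M, N]^{cl}(\phi)$ for all $\phi \in \nu$ by Remark \ref{rm:RChiGlob}. Since $\A$ is $\nu$-martingale-orthogonal, $[\A, N]_\nu = 0$, so by bilinearity of the $\nu$-covariation
\[
[\M, N]_\nu = [\M, \M + \A]_\nu = -[\A, \A]_\nu + [\A, \M]_\nu + \ldots
\]
more directly: $0 = [0, N]_\nu = [\M + \A, N]_\nu = [\M, N]_\nu + [\A, N]_\nu = [\M, N]_\nu$. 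Hence $[\M, N]_\nu = 0$ for every real continuous local martingale $N$.

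Next, following the Banach--Steinhaus argument in Proposition \ref{PDenseUnique}, but applied to the covariation of $\M$ with a fixed real local martingale $N$ rather than with $\M$ itself: for $\epsilon > 0$ define the linear operators $[\M, N]^\epsilon \colon (H \hat\otimes_\pi \mathbb{R})^* \equiv H^* \to \mathcal{C}([s,T])$ by $([\M, N]^\epsilon(\phi))(t) := \frac{1}{\epsilon}\int_s^t \,_{H^*}\langle \phi, (\M_{r+\epsilon} - \M_r)(N_{r+\epsilon} - N_r)\rangle_H \, dr$. These are linear and bounded; for each $\phi \in H^*$ the ucp limit $[\M, N](\phi) = \lim_{\epsilon \to 0}[\M, N]^\epsilon(\phi)$ exists because $\M$ and $N$ admit a global covariation; and for $\phi \in \nu$ the limit is zero by the previous paragraph. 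Banach--Steinhaus (Theorem 17, Chapter II in \cite{DunfordSchwartz58}) then gives that $\phi \mapsto [\M, N](\phi)$ is bounded on $H^*$, and density of $\nu$ in $H^*$ together with vanishing on $\nu$ forces $[\M, N] = 0$ on all of $H^*$, i.e. $[\M, N]^{cl}(h^*) = 0$ for all $h \in H$, which by \eqref{eq:propertyP2} means $[\langle \M, h\rangle, N]^{cl} = 0$. Now take $N = \langle \M, h\rangle$: then $[\langle \M, h\rangle, \langle \M, h\rangle]^{cl} = 0$, and since $\langle \M, h\rangle(0) = \langle \M(0), h\rangle = 0$ (recall both decompositions start $\A^i(0) = 0$, so $\M^i(0) = \X(0)$ and hence $\M(0) = 0$), a continuous local martingale with zero quadratic variation vanishing at the origin is identically zero. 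Thus $\langle \M, h\rangle \equiv 0$ for every $h \in H$, so $\M \equiv 0$ and $\A \equiv 0$, proving uniqueness.

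The only mild subtlety — the ``main obstacle'' — is bookkeeping the identification $\nu \subseteq H^* \equiv (H \hat\otimes_\pi \mathbb{R})^*$ and checking that the intermediate quantities ($[\M, N]^\epsilon$, the global covariation $[\M, N]$) are genuinely well-defined $H^*$-valued objects so that Banach--Steinhaus applies in the right space; this is exactly parallel to Proposition \ref{PDenseUnique} and Proposition \ref{pr:1}, so no new analytic difficulty arises, only a careful transcription with $H_1 = \mathbb{R}$.
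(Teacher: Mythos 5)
Your proof is correct, and its core is the same as the paper's: reduce to showing the difference $\M=\M^1-\M^2$ of the martingale parts vanishes, observe via bilinearity and the orthogonality of $\A=\A^1-\A^2$ that $[\M,N]_\nu=0$ for every real continuous local martingale $N$, identify this covariation with the classical one through Proposition \ref{pr:thA}, and conclude by taking $N=\langle \M,h\rangle$ so that a zero quadratic variation forces $\langle\M,h\rangle\equiv 0$. The one place where you diverge is in how density of $\nu$ in $H^*$ is exploited: you run the Banach--Steinhaus argument of Proposition \ref{PDenseUnique} to extend the vanishing of the global covariation $[\M,N]$ from $\nu$ to all of $H^*$ and only then specialize to $\phi=h^*$, whereas the paper short-circuits this by choosing an orthonormal basis $(e_i^*)$ of $H^*$ lying inside $\nu$ (possible precisely because $\nu$ is dense), setting $M^i=\langle\M,e_i\rangle$, and evaluating $[\M,M^i]_\nu$ directly at $e_i^*\in\nu$ to get $[M^i,M^i]^{cl}=0$. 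The paper's route avoids the uniform-boundedness machinery entirely; yours is slightly longer but needs no basis and transcribes Proposition \ref{PDenseUnique} verbatim with $H_1=\mathbb{R}$. Both are valid, and you correctly handle the point that $\M(0)=0$ (needed to pass from constant to identically zero), which the paper leaves implicit here.
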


\begin{Remark} \label{L421}
Taking into account the identification of $H^*$ with $(H\hat\otimes_\pi \mathbb{R})^*$ it is possible to consider $\nu$ as a dense subset of $(H\hat\otimes_\pi \mathbb{R})^*$ which is a Chi-subspace.
\end{Remark}
\begin{proof}[Proof of Proposition \ref{pr:421}]
We denote again the inner product on $H$ by 
$\langle \cdot, \cdot \rangle$.
We show that the unique decomposition of the $0$ process is trivial. Assume that $0=\X = \M + \A$. Since $\nu$ is dense in $H^*$ it is possible to choose an orthonormal basis $(e_i^*)$ in $\nu$. We introduce $M^i := \langle \M, e_i \rangle$, they are continuous real local martingales and then, thanks to the properties of $\A$ we have
\[
0= [\X, M^i]_\nu = [\M, M^i]_\nu + [\A, M^i]_\nu = [\M, M^i]_\nu.
\]
By Remark \ref{rm:RChiGlob}, Proposition \ref{pr:thA}
and Remark \ref{rm:propertyP} (iii) we know that
\[
\left\langle [\M, M^i]_\nu , e_i \right\rangle = \left\langle [\M, M^i]^{cl} , e_i \right\rangle
= [M^i, M^i]^{cl};
\]
so $M^i=0$ for all $i$ and then $\M=0$. This concludes the proof.
\end{proof}


\begin{Proposition}
\label{pr:dirichlet-weak-dirichlet}
Let $H$ and $H_1$ be two separable Hilbert spaces.
Let  $\chi = \chi_0 \hat\otimes_\pi \chi_0$
for some $\chi_0$ Banach space continuously embedded in $H^*$.
Define $\nu = \chi_0\hat\otimes_\pi H_1^*$.
 Then an $H$-valued continuous zero $\chi$-quadratic variation process
 $\A$ is a $\nu$-martingale-orthogonal process. 
\end{Proposition}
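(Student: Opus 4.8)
The goal is to prove that, for every $H_1$-valued continuous local martingale $\N$, the $\nu$-covariation $[\A,\N]_\nu$ exists and vanishes, so that $\A$ is $\nu$-martingale-orthogonal. By item 2 of Remark \ref{RDGR} it is enough to prove that
\[
A(\varepsilon):=\int_s^T\frac{\bigl|J\bigl((\A(r+\varepsilon)-\A(r))\otimes(\N(r+\varepsilon)-\N(r))\bigr)\bigr|_{\nu^*}}{\varepsilon}\,dr
\]
tends to $0$ in probability as $\varepsilon\to0^+$. First I would record the set-up: by Lemma \ref{lm:era52}, $\chi=\chi_0\hat\otimes_\pi\chi_0$ and $\nu=\chi_0\hat\otimes_\pi H_1^*$ are indeed Chi-subspaces of $(H\hat\otimes_\pi H)^*$ and of $(H\hat\otimes_\pi H_1)^*$, and I would identify $H$ with a subspace of $\chi_0^*$ by composing the Riesz isomorphism $H\simeq H^*$ with the transpose of the continuous embedding $\chi_0\hookrightarrow H^*$, exactly as in the proof of Proposition \ref{PFQVZ}.

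The second step is the computation of two dual norms, carried out as in the proof of Proposition \ref{PFQVZ} (via Lemma \ref{lm:che-era-remark}). For $a\in H\subseteq\chi_0^*$ and $b\in H_1$, evaluating $J(a\otimes b)$ on elementary tensors $\varphi\otimes\psi$ (with $\varphi\in\chi_0$, $\psi\in H_1^*$) and using the triangle inequality over representations of an element of $\nu$ gives
\[
\bigl|J(a\otimes b)\bigr|_{\nu^*}=|a|_{\chi_0^*}\,|b|_{H_1},\qquad\text{and in particular}\qquad \bigl|J(a^{\otimes 2})\bigr|_{\chi^*}=|a|_{\chi_0^*}^2 .
\]
Inserting the first identity with $a=\A(r+\varepsilon)-\A(r)$ and $b=\N(r+\varepsilon)-\N(r)$ turns $A(\varepsilon)$ into $\int_s^T\varepsilon^{-1}|\A(r+\varepsilon)-\A(r)|_{\chi_0^*}\,|\N(r+\varepsilon)-\N(r)|_{H_1}\,dr$.

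By the Cauchy--Schwarz inequality in $L^2([s,T],\varepsilon^{-1}dr)$,
\[
A(\varepsilon)\le\Bigl(\int_s^T\frac{|\A(r+\varepsilon)-\A(r)|_{\chi_0^*}^2}{\varepsilon}\,dr\Bigr)^{1/2}\Bigl(\int_s^T\frac{|\N(r+\varepsilon)-\N(r)|_{H_1}^2}{\varepsilon}\,dr\Bigr)^{1/2}.
\]
By the second norm identity the first factor equals $\bigl(\int_s^T\varepsilon^{-1}|J((\A(r+\varepsilon)-\A(r))^{\otimes 2})|_{\chi^*}\,dr\bigr)^{1/2}$, i.e. the square root of the quantity $A(\varepsilon)$ of \eqref{Aepsilon} attached to the $\chi$-quadratic variation of $\A$; since $\A$ has zero $\chi$-quadratic variation, this factor tends to $0$ in probability. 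The second factor is $[\N,\N]^{\varepsilon,\mathbb{R}}(T)^{1/2}$, which converges in probability because the $H_1$-valued continuous local martingale $\N$ admits a scalar quadratic variation (item 1 of Remark \ref{rm:after-definition-chi-D}); in particular it stays bounded in probability as $\varepsilon\to0$. Hence the product tends to $0$ in probability, $A(\varepsilon)\to 0$, and item 2 of Remark \ref{RDGR} yields $[\A,\N]_\nu=0$. Since $\N$ was arbitrary, $\A$ is $\nu$-martingale-orthogonal.

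The point that requires most care is the passage ``$\A$ has zero $\chi$-quadratic variation $\Rightarrow$ the first factor tends to $0$'': for $\chi$ of the product type $\chi_0\hat\otimes_\pi\chi_0$ this amounts to identifying the $\chi$-quadratic variation of the $H$-valued $\A$ with the scalar quadratic variation of $\A$ regarded, through $H\hookrightarrow\chi_0^*$, as a $\chi_0^*$-valued process, and this is exactly where the structural hypothesis on $\chi$ enters, through $|J(x^{\otimes 2})|_{\chi^*}=|x|_{\chi_0^*}^2$. Should one prefer not to rely on this identification, the same conclusion is reached by verifying conditions \textbf{H1} and \textbf{H2} of Definition \ref{def:covariation} for $(\A,\N)$ and $\nu$ directly. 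Condition \textbf{H1} follows from the Cauchy--Schwarz bound above combined with \textbf{H1} for the $\chi$-quadratic variation of $\A$ and the scalar quadratic variation of $\N$ (passing to a common subsequence). Condition \textbf{H2} reduces, on the dense subspace $\chi_0\otimes H_1^*\subseteq\nu$, to a real-valued statement: for $a^*\in\chi_0$ and $b^*\in H_1^*$, the process $[\A,\N]_\nu^\varepsilon(a^*\otimes b^*)$ is the usual $\varepsilon$-approximation of the covariation of the real processes $\langle a^*,\A\rangle$ and $\langle b^*,\N\rangle$; the former has zero real quadratic variation (a consequence of $[\A,\A]_\chi(a^*\otimes a^*)=0$), the latter is a real continuous local martingale, so this covariation vanishes, and one passes to all of $\nu$ by density using the uniform bound $\sup_t|[\A,\N]_\nu^\varepsilon(\phi)(t)|\le|\phi|_\nu\,A(\varepsilon)$ together with \textbf{H1}.
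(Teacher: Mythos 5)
Your primary route has a genuine gap at the step ``since $\A$ has zero $\chi$-quadratic variation, this factor tends to $0$ in probability''. The hypothesis that $\A$ has zero $\chi$-quadratic variation gives, through condition \textbf{H1} of Definition \ref{def:covariation}, only \emph{boundedness} along subsequences of $\int_s^T\varepsilon^{-1}\bigl|J\bigl((\A(r+\varepsilon)-\A(r))^{\otimes 2}\bigr)\bigr|_{\chi^{*}}\,dr$, together with the pointwise statement $[\A,\A]_{\chi}(\phi)\to 0$ for each fixed $\phi\in\chi$. The convergence to zero of that $\chi^{*}$-norm integral is the hypothesis of item 2 of Remark \ref{RDGR}, i.e.\ a \emph{sufficient} condition for zero $\chi$-quadratic variation, not a consequence of it; and, as your own norm identity $|J(a^{\otimes 2})|_{\chi^{*}}=|a|_{\chi_0^{*}}^{2}$ makes explicit, it is exactly the assertion that $\A$ has zero \emph{scalar} quadratic variation as a $\chi_0^{*}$-valued process. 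Keeping scalar and $\chi$-quadratic variation distinct is the whole point of the $\chi$-formalism (the motivating examples are processes with zero $\bar\chi$-quadratic variation which one does not want to assume have scalar quadratic variation), so this identification cannot be invoked. Note the contrast with Proposition \ref{PFQVZ}, where the Cauchy--Schwarz product converges because the factor that vanishes is the one controlled by a genuine \emph{scalar} zero quadratic variation hypothesis, while the other factor is merely bounded; in your product neither factor is of that type, so the reduction to Remark \ref{RDGR} item 2 fails for the pair $(\A,\N)$.

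Your fallback paragraph, on the other hand, is correct and is essentially the paper's own proof: test $[\A,\N]^{\varepsilon}_{\nu}$ on elementary tensors $a^{*}\otimes b^{*}$, where Cauchy--Schwarz bounds the $\varepsilon$-approximation by $\bigl([\A,\A]^{\varepsilon}_{\chi}(a^{*}\otimes a^{*})\,[\N,\N]^{\varepsilon}(b^{*}\otimes b^{*})\bigr)^{1/2}\to 0$; extend to finite linear combinations by linearity; and pass to all of $\nu$ using the uniform bound $\sup_{t}|[\A,\N]^{\varepsilon}_{\nu}(\phi)(t)|\le|\phi|_{\nu}\,A(\varepsilon)$, condition \textbf{H1} for $(\A,\N)$ (obtained from the same Cauchy--Schwarz bound, \textbf{H1} for the $\chi$-quadratic variation of $\A$ and the scalar quadratic variation of $\N$ along a common subsequence), and Banach--Steinhaus for $F$-spaces. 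That argument should be promoted to the main proof and the first route discarded.
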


\begin{proof}
Taking into account Lemma \ref{lm:era52},
$\chi$ is  a Chi-subspace of $(H\hat\otimes_\pi H)^*$ and
 $\nu$ is a Chi-subspace of $(H\hat\otimes_\pi H_1)^*$.
Let $\N$ be a continuous local martingale with values in $H_1$. 
We need to show that $[\A, \N]_\nu=0$. 
We consider the random maps
$T^\epsilon \colon  \nu \times \Omega \to \mathcal{C}([s,T])$ defined by
\[
T^\epsilon(\phi) := [\A, \N]_\nu^{\epsilon} (\phi) = \frac{1}{\epsilon} 
\int_s^\cdot \!\!\! \,_{\nu^*}\!\!\left \langle (\A(r+\epsilon) - \A(r)) 
\otimes (\N(r+\epsilon) - \N(r)), \phi \right\rangle_{\nu} \! \ud r,
\]
for $\phi \in \nu$.

\emph{Step 1}:

Suppose that $\phi = h^*\otimes k^*$ for $h^* \in\chi_0$ and $k \in H_1$. Then 
\begin{multline}
T^\epsilon(\phi)(t) = \frac{1}{\epsilon} \int_s^t \,_{\chi_0^*}\left \langle (\A(r+\epsilon) - \A(r)), h^* \right \rangle_{\chi_0} \, \left\langle (\N(r+\epsilon) - \N(r)), k \right\rangle_{H_1} \ud r\\
\leq 
\left [ \frac{1}{\epsilon} \int_s^t \,_{\chi_0^*}\left \langle (\A(r+\epsilon) - \A(r)), h^* \right \rangle^2_{\chi_0} \ud r 
  \frac{1}{\epsilon} \int_s^t \left\langle (\N(r+\epsilon) - \N(r)), k \right\rangle^2_{H_1} \ud r\right ]^{1/2}\\
=
\left [\frac{1}{\epsilon} \int_s^t \,_{\chi^*}\left \langle (\A(r+\epsilon) - \A(r))\otimes^2, h^*\otimes h^* \right \rangle_{\chi} \ud r \right ]^{\frac{1}{2}} \\
 \times \left [\frac{1}{\epsilon} \int_s^t \left\langle (\N(r+\epsilon) - \N(r)), k \right\rangle^2_{H_1} \ud r\right  ]^{\frac{1}{2}},
\end{multline}
that converges ucp to
\[
\left ( [\A,\A](t)(h^* \otimes h^*) [\N,\N]^{\rm cl}(t)(k^*\otimes k^*)
 \right )^{1/2} = 0,
\]
since the quadratic quadratic variation of a local martingale
is the classical one and taking into account item (ii) of
Remark \ref{rm:propertyP}.

\emph{Step 2}:

We denote  by  $\mathcal{D}$ the linear combinations of elements of the form
 $h^* \otimes k^* $ for $h^* \in \chi_0$ and $k \in H_1$. We remark that
 $\mathcal{D}$ is dense in $\nu$. From the convergence found in \emph{Step 1}, it follows that, for every $\phi\in \mathcal{D}$, ucp we have
\[
T^{\epsilon} (\phi) \xrightarrow{\epsilon \to 0} 0.
\]
\emph{Step 3}:

We consider a generic $\phi\in\nu$. By Lemma \ref{lm:che-era-remark},
for $t \in [s,T]$ it follows
\begin{multline}
\label{eq:dacitare-primo-passo}
|T^{\epsilon}(\phi)(t)| \leq |\phi|_{\nu} \int_s^t \frac{\left | (\A(r+\epsilon) - \A(r)) \otimes (\N(r+\epsilon) - \N(r)) \right |_{\nu^*}} {\epsilon} \ud r \\
= |\phi|_{\nu} \frac{1}{\epsilon}\int_s^t \left | (\N(r+\epsilon) - \N(r)) \right |_{H_1} \left| (\A(r+\epsilon) - \A(r)) \right |_{\chi_0^*} \ud r\\
\leq 
|\phi|_{\nu} \left ( \frac{1}{\epsilon}\int_s^t \left | (\N(r+\epsilon) - \N(r)) \right |^2_{H_1} \ud r \; 
\frac{1}{\epsilon}\int_s^t
\left| (\A(r+\epsilon) - \A(r)) \right |^2_{\chi_0^*} \ud r \right )^{\frac12}\\
=
|\phi|_{\nu} \Bigg ( \frac{1}{\epsilon}\int_s^t \left | (\N(r+\epsilon) - \N(r)) \right |^2_{H_1} \ud r  \times 
\frac{1}{\epsilon}\int_s^t
\left| (\A(r+\epsilon) - \A(r))\otimes^2 \right |_{\chi^*} \ud r \Bigg )^{\frac12}.
\end{multline}
To prove that $[\A, \N]_\nu=0$ we check the corresponding conditions 
\textbf{H1} and \textbf{H2} of the Definition \ref{def:covariation}.
 By Lemma \ref{lm:L411} we know that $\N$ admits a global 
quadratic variation i.e. a $(H_1\otimes H_1)^*$-quadratic variation.
 By condition \textbf{H1} of the Definition \ref{def:covariation}  related to
 $(H_1\otimes H_1)^*$-quadratic variation for the process 
$\N$ and the $\chi$-quadratic variation of $\A$, for any sequence $(\epsilon_n)$
converging to zero, there is a subsequence   $(\epsilon_{n_k})$
such that  the sequence $T^{\epsilon_{n_k}}(\phi)$ is bounded  for any $\phi$ 
in the $\mathcal{C}[s,T]$ metric a.s.
 condition \textbf{H1} of the $\nu$-covariation. By Banach-Steinhaus for $F$-spaces (Theorem 17, Chapter II in \cite{DunfordSchwartz58}) it follows that $T^\epsilon (\phi)\xrightarrow{\epsilon\to 0} 0$ ucp for all $\phi \in \nu$ and so condition \textbf{H2} and the final result follows.
\end{proof}

\begin{Corollary}
\label{Cor:codi}
Assume that the hypotheses of Proposition \ref{pr:dirichlet-weak-dirichlet} 
are satisfied. If $\X$ is a $\chi$-Dirichlet process then we have the following.
\begin{itemize}
 \item[(i)] $\X$ is a $\nu$-weak-Dirichlet process.
 \item[(ii)] $\X$ is a $\chi$-weak Dirichlet process.
\item[(iii)] $\X$ is a $\chi$-finite-quadratic-variation process.
\end{itemize}
\end{Corollary}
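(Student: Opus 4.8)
The plan is to work with the decomposition $\X = \M + \A$ supplied by Definition~\ref{def:chi-Dirichlet-process}: $\M$ a continuous local martingale, $\A$ a continuous process with zero $\chi$-quadratic variation and $\A(0)=0$; note that $\A = \X-\M$ is adapted, and that $\X$, being $(\mathscr P)$-measurable, is adapted as well. Part (i) is then essentially immediate: by Proposition~\ref{pr:dirichlet-weak-dirichlet} the zero $\chi$-quadratic variation process $\A$ is $\nu$-martingale-orthogonal, so the decomposition $\X=\M+\A$ meets every requirement of Definition~\ref{def:chi-weak-Dirichlet-process} and $\X$ is a $\nu$-weak Dirichlet process.

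For part (ii), here ``$\chi$-weak Dirichlet'' is understood with respect to the Chi-subspace $\chi\subseteq(H\hat\otimes_\pi H)^*$, i.e.\ with $H_1=H$, so that we must show $[\A,\N]_\chi=0$ for every $H$-valued continuous local martingale $\N$. First I would apply Proposition~\ref{pr:dirichlet-weak-dirichlet} with $H_1:=H$, obtaining that $\A$ is $(\chi_0\hat\otimes_\pi H^*)$-martingale-orthogonal, i.e.\ $[\A,\N]_{\chi_0\hat\otimes_\pi H^*}=0$. Then I would pass from this larger Chi-subspace to the smaller one $\chi=\chi_0\hat\otimes_\pi\chi_0$: since $\chi_0$ embeds continuously into $H^*$, the space $\chi$ embeds continuously into $\chi_0\hat\otimes_\pi H^*$ (both being Chi-subspaces by Lemma~\ref{lm:era52}), and for $a,b\in H$ one has $|J(a\otimes b)|_{\chi^*}=|a|_{\chi_0^*}|b|_{\chi_0^*}\le|a|_{\chi_0^*}|b|_{H}=|J(a\otimes b)|_{(\chi_0\hat\otimes_\pi H^*)^*}$; hence condition \textbf{H1} of Definition~\ref{def:covariation} for $\chi$ is inherited from that for $\chi_0\hat\otimes_\pi H^*$, while condition \textbf{H2} is inherited because for $\phi\in\chi$ the $\epsilon$-approximation $[\A,\N]^\epsilon_\chi(\phi)$ coincides with $[\A,\N]^\epsilon_{\chi_0\hat\otimes_\pi H^*}(\phi)$ and therefore converges ucp to $0$, with $\widetilde{[\A,\N]}_\chi\equiv 0$. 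Thus $\A$ is $\chi$-martingale-orthogonal and $\X=\M+\A$ is $\chi$-weak Dirichlet. (Equivalently one may re-run verbatim the three-step argument in the proof of Proposition~\ref{pr:dirichlet-weak-dirichlet}, choosing in Step~1 the test elements $h^*\otimes k^*$ with \emph{both} $h^*,k^*\in\chi_0$.) This transfer from $\chi_0\hat\otimes_\pi H^*$ to $\chi_0\hat\otimes_\pi\chi_0$ is the only point that is not purely formal, since Proposition~\ref{pr:dirichlet-weak-dirichlet} is not literally stated for $\chi$ itself.

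For part (iii), I must check that $[\X,\X]_\chi$ exists. Being a continuous local martingale, $\M$ admits a global quadratic variation (Remark~\ref{rm:after-definition-chi-D} and Lemma~\ref{lm:L411}), hence a $\chi$-quadratic variation for every Chi-subspace $\chi$ by Remark~\ref{rm:RChiGlob}; in particular $[\M,\M]_\chi$ exists. By hypothesis $[\A,\A]_\chi=0$, and by the argument of part (ii) applied with $\N=\M$, together with the invariance of $\chi=\chi_0\hat\otimes_\pi\chi_0$ under the transposition $a\otimes b\mapsto b\otimes a$, both $[\A,\M]_\chi$ and $[\M,\A]_\chi$ exist and vanish. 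By bilinearity of the $\chi$-covariation,
\[
[\X,\X]_\chi = [\M,\M]_\chi + [\M,\A]_\chi + [\A,\M]_\chi + [\A,\A]_\chi = [\M,\M]_\chi ,
\]
so $\X$ has a finite $\chi$-quadratic variation. The only delicate ingredient here is again the vanishing of the mixed terms, i.e.\ part (ii).
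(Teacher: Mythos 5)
Your proposal is correct and follows essentially the same route as the paper: (i) is immediate from Proposition \ref{pr:dirichlet-weak-dirichlet}, (ii) is obtained by taking $H_1=H$ so that $\chi=\chi_0\hat\otimes_\pi\chi_0$ sits inside $\nu=\chi_0\hat\otimes_\pi H^*$ and the vanishing $\nu$-covariation restricts to a vanishing $\chi$-covariation, and (iii) follows from the bilinearity of the $\chi$-covariation once the mixed terms are killed by (ii). Your explicit verification that conditions \textbf{H1} and \textbf{H2} are inherited under the continuous inclusion of Chi-subspaces is a welcome elaboration of a step the paper leaves implicit, but it is not a different argument.
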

\begin{proof}
(i)   follows by Proposition \ref{pr:dirichlet-weak-dirichlet}.
\\
As far as (ii) is concerned, let   $\X = \M + \A$ be 
 a  $\chi$-Dirichlet process decomposition, where
 $\M$ is a local martingale.
 Setting $H_1 = H$,
then 
 $\chi$ is included
in $\nu$, so  Proposition   \ref{pr:dirichlet-weak-dirichlet}
implies that $\A$ is a $\chi$-orthogonal process 
and so (ii) follows. \\
We prove now (iii).  
By Lemma \ref{lm:L411} and Remark 
\ref{rm:RChiGlob} $\M$ admits a $\chi$-quadratic variation.  By
 the bilinearity of the $\chi$-covariation, it is enough to show
that $[\M,\A]_\chi = 0$. This follows from item (ii).
\end{proof}

\begin{Proposition}
\label{pr:DiGirolamiRusso37}
Let $B_1$ and $B_2$ be two Banach spaces and $\chi$ a Chi-subspace of $(B_1\hat\otimes_{\pi} B_2)^*$. Let $\X$ and $\Y$ be two stochastic processes with values respectively in $B_1$ and $B_2$ such that $(\X,\Y)$ admits a
 $\chi$-covariation. Let $G$ be a continuous measurable process $G: [s,T] \times \Omega \to \mathcal{K}$ where $\mathcal{K}$ is a closed separable subspace of $\chi$. 
Then for every $t\in [s,T]$
\begin{equation}   		\label{eq SDFR}  
\int_{s}^{t} \tensor[_\chi]{\langle G(\cdot,r), [\X,\Y]^{\epsilon} (\cdot, r)\rangle}{_{\chi^*}} \ud r 
\xrightarrow[\epsilon \longrightarrow 0]{} 
\int_{s}^{t} \tensor[_\chi]{\langle G(\cdot,r),d\widetilde{[\X,\Y]}(\cdot,r)\rangle}{_{\chi^*}}
\end{equation}
in probability.
\end{Proposition}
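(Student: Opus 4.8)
The plan is to reduce \eqref{eq SDFR} to integrands that are piecewise constant in time and take finitely many values in a fixed countable set --- where convergence is just condition \textbf{H2}(i) of Definition \ref{def:covariation} applied term by term --- and then to pass to a general continuous $G$ by means of a total-variation estimate supplied by condition \textbf{H1}. Concretely, I would write $\mu^\epsilon$ for the random $\chi^*$-valued measure on $[s,T]$ with density $[\X,\Y]^\epsilon(\cdot,r)=\frac1\epsilon J\big((\X(r+\epsilon)-\X(r))\otimes(\Y(r+\epsilon)-\Y(r))\big)$, so that the left-hand side of \eqref{eq SDFR} equals $\int_s^t\tensor[_\chi]{\langle G(\cdot,r),\ud\mu^\epsilon(r)\rangle}{_{\chi^*}}$ (a genuine Lebesgue integral, its integrand being dominated in modulus by $\sup_r|G(\cdot,r)|_\chi$ times the integrand of $A(\epsilon)$ from \eqref{Aepsilon}). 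Two elementary observations are then needed: (a) for a simple $\chi$-valued map $\Phi=\sum_{j=1}^N\phi_j I_{(t_{j-1},t_j]}$ attached to a subdivision $s=t_0<\dots<t_N=t$ one has $\int_s^t\tensor[_\chi]{\langle\Phi(r),\ud\mu^\epsilon(r)\rangle}{_{\chi^*}}=\sum_{j}\big([\X,\Y]^\epsilon_\chi(\phi_j)(t_j)-[\X,\Y]^\epsilon_\chi(\phi_j)(t_{j-1})\big)$; and (b) the Stieltjes bounds $\big|\int_s^t\tensor[_\chi]{\langle\psi(r),\ud\mu^\epsilon(r)\rangle}{_{\chi^*}}\big|\le A(\epsilon)\sup_r|\psi(r)|_\chi$ and $\big|\int_s^t\tensor[_\chi]{\langle\psi(r),\ud\widetilde{[\X,\Y]}(r)\rangle}{_{\chi^*}}\big|\le V_s^t\sup_r|\psi(r)|_\chi$ hold for every continuous $\chi$-valued $\psi$, where $V_s^t<\infty$ a.s.\ denotes the total variation on $[s,t]$ of the $\chi^*$-valued bounded-variation process $\widetilde{[\X,\Y]}$; in particular the right-hand side of \eqref{eq SDFR} is a well-defined Stieltjes integral.

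Next I would carry out the extraction. Fixing an arbitrary sequence $\epsilon_n\downarrow0$, and recalling that convergence in probability as $\epsilon\to0$ follows once it holds along every such sequence, and that a sequence converges in probability iff every subsequence admits an a.s.-convergent further subsequence, it suffices, for a given subsequence, to produce a further subsequence along which \eqref{eq SDFR} holds almost surely. By \textbf{H1} I first pass to a subsequence --- still written $(\epsilon_k)$ --- with $M:=\sup_k A(\epsilon_k)<\infty$ a.s. Then, choosing a countable set $D$ dense in $\mathcal{K}$ and using that \textbf{H2}(i) gives $[\X,\Y]^\epsilon_\chi(\phi)\to[\X,\Y]_\chi(\phi)$ ucp for each $\phi\in D$, a diagonal extraction produces a further subsequence (again denoted $(\epsilon_k)$) and a full-measure event $\Omega_0$ on which: $r\mapsto G(\omega,r)$ is continuous, $M<\infty$ and $V_s^t<\infty$; for every $\phi\in D$, $\sup_{u\in[s,T]}\big|[\X,\Y]^{\epsilon_k}_\chi(\phi)(u)-[\X,\Y]_\chi(\phi)(u)\big|\to0$; and, combining \textbf{H2}(ii) with continuity of $[\X,\Y]_\chi(\phi)$ and right-continuity of $\widetilde{[\X,\Y]}(\cdot)(\phi)$, the paths $u\mapsto\widetilde{[\X,\Y]}(\omega,u)(\phi)$ and $u\mapsto[\X,\Y]_\chi(\phi)(\omega,u)$ coincide on $[s,T]$ for every $\phi\in D$.

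Finally, fixing $\omega\in\Omega_0$ and $\delta>0$, uniform continuity of $r\mapsto G(\omega,r)$ on $[s,t]$ together with density of $D$ in $\mathcal{K}$ yields a subdivision $s=t_0<\dots<t_N=t$ and elements $\phi_j\in D$ with $|G(\omega,r)-\phi_j|_\chi\le\delta$ on $(t_{j-1},t_j]$; setting $G^\delta(r):=\sum_j\phi_j I_{(t_{j-1},t_j]}(r)$, the Stieltjes bounds of (b) give $\big|\int_s^t\tensor[_\chi]{\langle G-G^\delta,\ud\mu^{\epsilon_k}\rangle}{_{\chi^*}}\big|\le\delta M$ and $\big|\int_s^t\tensor[_\chi]{\langle G-G^\delta,\ud\widetilde{[\X,\Y]}\rangle}{_{\chi^*}}\big|\le\delta V_s^t$, while (a) and the term-by-term convergence valid on $\Omega_0$ give $\int_s^t\tensor[_\chi]{\langle G^\delta,\ud\mu^{\epsilon_k}\rangle}{_{\chi^*}}\to\int_s^t\tensor[_\chi]{\langle G^\delta,\ud\widetilde{[\X,\Y]}\rangle}{_{\chi^*}}$ as $k\to\infty$. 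Hence $\limsup_k\big|\int_s^t\tensor[_\chi]{\langle G,\ud\mu^{\epsilon_k}\rangle}{_{\chi^*}}-\int_s^t\tensor[_\chi]{\langle G,\ud\widetilde{[\X,\Y]}\rangle}{_{\chi^*}}\big|\le\delta(M+V_s^t)$ on $\Omega_0$, and letting $\delta\downarrow0$ gives the a.s.\ convergence along $(\epsilon_k)$, which finishes the proof.

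The only genuine obstacle is one of quantifier order: \textbf{H1} controls the total variation merely along a subsequence, and \textbf{H2}(i) supplies ucp convergence one test element at a time, so the crux is to extract a \emph{single} subsequence along which the total-variation bound and the simultaneous uniform convergence for all $\phi$ in a countable dense subset of $\mathcal{K}$ hold on one common almost-sure event --- precisely what permits the subdivision in the last step to depend on $\omega$. The remaining ingredients (the two Stieltjes estimates and the term-by-term limit) are routine.
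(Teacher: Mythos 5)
Your proposal is correct. Note first that the paper itself contains no proof of this statement: its ``proof'' is a one-line citation to Proposition 3.7 of the Di Girolami--Russo Fukushima paper, so there is no internal argument to compare against. Your self-contained argument is sound and follows the route one would expect for this kind of statement: interpret the left-hand side as integration of $G$ against the random $\chi^*$-valued measure with density $\frac1\epsilon J\big((\X(r+\epsilon)-\X(r))\otimes(\Y(r+\epsilon)-\Y(r))\big)$, reduce to $D$-valued step integrands (where convergence is exactly \textbf{H2}(i) applied finitely many times), and control the approximation error uniformly in $\epsilon$ via the total-variation bound coming from \textbf{H1} on one side and the bounded variation of $\widetilde{[\X,\Y]}$ on the other. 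You correctly identified and resolved the two genuine delicacies: the quantifier order (a single subsequence along which $\sup_k A(\epsilon_k)<\infty$ and the uniform convergence for \emph{all} $\phi$ in a countable dense subset of $\mathcal{K}$ hold on one common full-measure event, obtained by diagonal extraction from the ucp convergences), and the upgrade of the a.s.\ identity $\widetilde{[\X,\Y]}(\cdot,u)(\phi)=[\X,\Y]_\chi(\phi)(\cdot,u)$ to a simultaneous path identity via continuity and right-continuity. The only point you leave implicit is that $A(\epsilon)<\infty$ a.s.\ for each \emph{fixed} $\epsilon$ (needed for your Lebesgue-integral interpretation of the left-hand side); this is not literally guaranteed by \textbf{H1}, which only controls a subsequence, but it holds in the setting of the paper because the processes involved are continuous, hence a.s.\ bounded on $[s,T+\epsilon]$, and the Chi-subspace embedding gives $|J(a\otimes b)|_{\chi^*}\le C\,|a|_1\,|b|_2$. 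With that remark added, the proof is complete.
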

\begin{proof}
See \cite{DiGirolamiRusso11Fukushima} Proposition 3.7.
\end{proof}

We state below the most important result related to
 the stochastic calculus part of the paper. It generalizes the 
finite dimensional result contained in \cite{GozziRusso06} Theorem 4.14. The definition of real weak Dirichlet process is recalled in Definition \ref{def:Dirweak}.

\begin{Theorem}
\label{th:prop6}
Let $\nu_0$ be a Banach subspace continuously embedded in $H^*$. Define $\nu:= \nu_0\hat\otimes_\pi\mathbb{R}$ and $\chi:=\nu_0\hat\otimes_\pi\nu_0$. Let
 $F\colon [s,T] \times H \to \mathbb{R}$  be a $C^{0,1}$-function.  Denote with $\partial_x F$ the Frechet derivative of $F$ w.r.t. $x$ and assume that the mapping $(t,x) \mapsto \partial_xF(t,x)$ is continuous from $[s,T]\times H$ to $\nu_0$.  Let $ \X(t) = \M(t) + \A(t)$ for $t\in [s,T]$ be an $\nu$-weak-Dirichlet process with finite $\chi$-quadratic variation. Then $Y(t):= F(t, \X(t))$ is a (real) weak Dirichlet process with local martingale part
\[
R(t) = F(s, \X(s)) + \int_s^t \left\langle \partial_xF(r,\X(r)), \ud \M(r) \right\rangle.
\]
\end{Theorem}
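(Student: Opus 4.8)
The plan is to mimic the finite-dimensional argument of Theorem 4.14 in \cite{GozziRusso06}, reducing everything to the case of smooth $F$ by a mollification/approximation argument, and then identifying the martingale part via the uniqueness of the weak Dirichlet decomposition (Proposition \ref{rm:ex418}). First I would fix a smooth nonnegative kernel and set $F_n := F * \rho_n$ (convolving only in the space variable, after a standard mollification also in time on the interior of $[s,T]$), so that $F_n \in C^{1,2}([s,T] \times H; \R)$, $F_n \to F$ and $\partial_x F_n \to \partial_x F$ uniformly on compact subsets of $[s,T] \times H$, the latter convergence taking place in $\nu_0$ since by hypothesis $(t,x) \mapsto \partial_x F(t,x)$ is continuous with values in $\nu_0$. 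One must be a little careful because $\partial^2_{xx} F_n$ need not converge (indeed $F$ is only $C^{0,1}$), so the strategy is to write the decomposition of $Y_n(t) := F_n(t,\X(t))$ using the It\^o formula \eqref{DGR-Ito} (Theorem 5.2 of \cite{DiGirolamiRusso11}) and then pass to the limit only in the terms that converge, absorbing the troublesome second-order term into the ``orthogonal'' part.

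The key steps, in order, are as follows. (1) For each $n$ apply \eqref{DGR-Ito} to $F_n$ and $\X$ (legitimate since $\X$ has a $\chi$-quadratic variation and $\partial^2_{xx} F_n \in C([s,T]\times H; \chi)$, because $\chi = \nu_0 \hat\otimes_\pi \nu_0$ and $F_n$ is smooth), obtaining
\[
Y_n(t) = F_n(s,\X(s)) + \int_s^t \partial_s F_n(r,\X(r))\,dr + \int_s^t {}_{H^*}\langle \partial_x F_n(r,\X(r)), d^-\X(r)\rangle_H + \frac12 \int_s^t {}_\chi\langle \partial^2_{xx} F_n(r,\X(r)), d\widetilde{[\X,\X]}_r\rangle_{\chi^*}.
\]
(2) Split the forward integral $\int_s^t {}_{H^*}\langle \partial_x F_n(r,\X(r)), d^-\X(r)\rangle$ using $\X = \M + \A$ into $\int_s^t \langle \partial_x F_n(r,\X(r)), d\M(r)\rangle$ (an It\^o integral against the local martingale $\M$, by Theorem \ref{th:integral-forrward=ito-martingale}) plus $\int_s^t {}_{\nu_0}\langle \partial_x F_n(r,\X(r)), d^-\A(r)\rangle_{\nu_0^*}$. (3) Observe that $Y_n \to Y$ ucp, and that the drift term $\int_s^t \partial_s F_n\,dr$ and the martingale term $\int_s^t \langle \partial_x F_n(r,\X(r)), d\M(r)\rangle$ converge ucp to $\int_s^t \partial_s F$? — wait, $\partial_s F$ need not exist; instead group $\int_s^t \partial_s F_n\,dr$ together with the second-order term and the $\A$-forward integral into a single process $\Gamma_n(t) := Y_n(t) - F_n(s,\X(s)) - \int_s^t \langle \partial_x F_n(r,\X(r)), d\M(r)\rangle$. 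Since $Y_n(t) \to Y(t)$ and $\int_s^t \langle \partial_x F_n(r,\X(r)), d\M(r)\rangle \to \int_s^t \langle \partial_x F(r,\X(r)), d\M(r)\rangle =: R(t) - F(s,\X(s))$ ucp (this last convergence uses that $\partial_x F_n \to \partial_x F$ in $\nu_0 \hookrightarrow H^*$ uniformly on compacts, together with the continuity/contraction property of the stochastic integral against $\M$ and a localization as in Theorem \ref{th:integral-forrward=ito-martingale}), the process $\Gamma_n$ converges ucp to $\Gamma := Y - R$. (4) It remains to show that $\Gamma$ is $\R$-martingale-orthogonal, i.e. $[\Gamma, N] = 0$ for every continuous real local martingale $N$.

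For step (4), the plan is to compute $[\Gamma_n, N]$ and pass to the limit. Each summand of $\Gamma_n$ is handled separately: $[\int_s^\cdot \partial_s F_n\,dr, N] = 0$ and $[\int_s^\cdot {}_\chi\langle \partial^2_{xx} F_n, d\widetilde{[\X,\X]}\rangle_{\chi^*}, N] = 0$ because both are bounded-variation (hence zero-quadratic-variation) processes, which have zero covariation with any continuous process (using the real analogue of Proposition \ref{RBVQV} and the Banach-space Cauchy--Schwarz bound). The genuinely delicate term is $[\int_s^\cdot {}_{\nu_0}\langle \partial_x F_n(r,\X(r)), d^-\A(r)\rangle, N]$: here I would use that $\A$ is $\nu$-martingale-orthogonal with $\nu = \nu_0 \hat\otimes_\pi \R \equiv \nu_0$, so that $[\A, N]_{\nu_0} = 0$, and then argue — as in \cite{GozziRusso06} and using Proposition \ref{pr:DiGirolamiRusso37} in the degenerate form $H_2 = \R$ — that for a continuous $\nu_0$-valued integrand $G(r) := \partial_x F_n(r, \X(r))$ one has $[\int_s^\cdot {}_{\nu_0}\langle G(r), d^-\A(r)\rangle, N]_t = \int_s^t {}_{\nu_0}\langle G(r), d\widetilde{[\A,N]}_{\nu_0}(r)\rangle_{\nu_0^*} = 0$. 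Finally, using that $\Gamma_n \to \Gamma$ ucp and a stability result for the covariation (the covariation $[\cdot, N]$ passes to ucp limits when the approximating covariations vanish, cf. the argument structure in Proposition \ref{pr:dirichlet-weak-dirichlet}), conclude $[\Gamma, N] = 0$. Then $Y = R + \Gamma$ is the desired weak Dirichlet decomposition, and it is the decomposition by the uniqueness Proposition \ref{rm:ex418}.

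The main obstacle I anticipate is step (4) for the $\A$-forward-integral term: one must show that $r \mapsto \partial_x F_n(r,\X(r))$, which is only continuous with values in $\nu_0$ (not in a closed separable subspace a priori, and not of bounded variation), can legitimately be "pulled out" of the covariation bracket against $N$, i.e. that $[\int_s^\cdot {}_{\nu_0}\langle G(r), d^-\A(r)\rangle, N] = \int_s^\cdot {}_{\nu_0}\langle G(r), d\widetilde{[\A,N]}_{\nu_0}(r)\rangle$. This requires a chain-rule-type statement for the $\nu_0$-forward integral and its covariation with a real local martingale — the infinite-dimensional analogue of the elementary real identity $[\int_0^\cdot g\,d^-A, N] = \int_0^\cdot g\,d[A,N]$ — which one proves by a regularization computation combined with Proposition \ref{pr:DiGirolamiRusso37} and the uniform boundedness (Banach--Steinhaus) arguments already used above; the separability issue for the range of $G$ is handled by noting that the image of a compact set under a continuous map is compact hence separable, so $G$ takes values in a closed separable subspace of $\nu_0$ after restricting to the relevant compact set via localization of $\X$.
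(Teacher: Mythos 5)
Your proposal takes a genuinely different route from the paper's (which never regularizes $F$: it computes the regularized covariation $[F(\cdot,\X(\cdot)),N]^{\epsilon}$ directly, Taylor-expands $F$ in space to first order with integral remainder, identifies the leading term as $\int_s^{\cdot}\langle \partial_xF(r,\X(r)),\ud\widetilde{[\X,N]}_{\nu}(r)\rangle=\int_s^{\cdot}\langle\partial_xF(r,\X(r)),\ud[\M,N]^{cl}(r)\rangle$ via Proposition \ref{pr:DiGirolamiRusso37} together with the $\nu$-orthogonality of $\A$, kills the remainder by uniform continuity of $\partial_xF$ on compacts plus Cauchy--Schwarz against the $\chi$-quadratic variation of $\X$ and the quadratic variation of $N$, and then invokes Lemma \ref{lm:Prop3}). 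Unfortunately your route has gaps that I do not see how to close. The first is at the very start: $F*\rho_n$ is not defined, because $H$ is infinite dimensional and carries no translation-invariant measure; there is no standard mollification producing $F_n\in C^{1,2}([s,T]\times H)$ with $\partial_{xx}^2F_n\in C([s,T]\times H;\chi)$ \emph{and} $\partial_xF_n\to\partial_xF$ uniformly on compacts in $\nu_0$. (Finite-dimensional projections, as in Step 1 of the proof of Theorem \ref{th:exlmIto}, do not help here since $F\circ P_N$ is no smoother than $F$, and inf-sup convolutions give at best $C^{1,1}$ with no control in $\nu_0$.) Everything downstream depends on this construction.

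Even granting the approximation, two further steps would fail as written. In step (4) you need the chain rule $[\int_s^{\cdot}{}_{\nu_0}\langle G(r),\ud^-\A(r)\rangle,N]=\int_s^{\cdot}{}_{\nu_0}\langle G(r),\ud\widetilde{[\A,N]}_{\nu_0}(r)\rangle$ for the forward integral against the \emph{non-semimartingale} $\A$; this is not what Proposition \ref{pr:DiGirolamiRusso37} gives (that proposition concerns $\int\langle G,[\X,\Y]^{\epsilon}\rangle$, not the covariation of a forward-integral process), and the $\epsilon$-approximation of the left-hand side involves $\int_r^{r+\epsilon}\langle G,\ud^-\A\rangle$ rather than $\langle G(r),\A(r+\epsilon)-\A(r)\rangle$; controlling the discrepancy requires quantitative information on the increments of a forward integral that is only defined as a limit in probability. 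You flag this as the delicate point but the sketch offered is essentially the content of the theorem itself. Finally, the concluding passage ``$\Gamma_n\to\Gamma$ ucp and $[\Gamma_n,N]=0$ imply $[\Gamma,N]=0$'' is false in general: the covariation, even in the weak sense of \eqref{DefRealIntCov}, is not stable under ucp convergence without uniform bounds on the approximating regularizations (this is precisely why the paper's Proposition \ref{pr:dirichlet-weak-dirichlet} has to invoke condition \textbf{H1} and Banach--Steinhaus in $\epsilon$, uniformly, rather than term by term). The correct and economical way to identify the martingale part, which the paper uses and you could adopt, is to prove $[Y,N]=\int_s^{\cdot}\langle\partial_xF(r,\X(r)),\ud[\M,N]^{cl}(r)\rangle=[R,N]^{cl}$ for every real continuous local martingale $N$ by the direct regularization computation on the un-mollified $F$.
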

\begin{Remark}
\label{rm:dire-a-francesco}
Indeed the condition $\X$ having a $\chi$-quadratic variation may be replaced with the weaker  condition of $(\X,M)$ having a $\nu_0\hat\otimes_\pi \mathbb{R}$-covariation for any real continuous local martingale $M$.
\end{Remark}
\begin{proof}[Proof of Theorem \ref{th:prop6}]
By definition $\X$ can be written as the sum of a continuous local martingale $\M$ and a $\nu$-martingale-orthogonal process $\A$. 

Let $N$ be a real-valued local martingale. Taking into account 
Lemma \ref{lm:Prop3} and that the covariation of two  real local martingales
defined in \eqref{DefRealIntCov},
coincides with the classical covariation,
 it is enough to prove that 
\[
[F(\cdot, \X(\cdot)), N](t) = \int_s^t \left\langle \partial_xF(r,\X(r)), \ud [\M,N]^{cl}(r) \right\rangle, \qquad \text{for all } t\in [s,T].
\] Let $t \in [s,T]$. We evaluate the $\epsilon$-approximation of
 the covariation, i.e.
\[
\frac{1}{\epsilon} \int_s^t \left ( F(r+\epsilon, \X(r+\epsilon)) - F(r, \X(r)) \right ) 
\left ( N(r+\epsilon) - N(r) \right ) \ud r.
\]
It equals 
\[
I_1(t,\epsilon) + I_2(t,\epsilon),
\]
where
\[
I_1(t,\epsilon) =  \int_s^t \left ( F(r+\epsilon, \X(r+\epsilon)) - F(r+\epsilon, \X(r)) \right ) 
 \frac{ \left ( N(r+\epsilon) - N(r) \right )}{\epsilon} \ud r
\]
and
\[
I_2(t,\epsilon) =  \int_s^t \left ( F(r+\epsilon, \X(r)) - F(r, \X(r)) \right ) 
\frac{\left( N(r+\epsilon) - N(r) \right )}{\epsilon} \ud r.
\]
We prove now that
\begin{equation}
\label{eq:11}
I_1(t,\epsilon) \xrightarrow{\epsilon\to 0} \int_s^t \left\langle \partial_xF(r, \X(r)) , \ud [\M,N]^{cl}(r) \right\rangle
\end{equation}
in probability; in fact
\[
I_1(t,\epsilon) = I_{11}(t,\epsilon) + I_{12}(t,\epsilon)
\]
where 
\[
I_{11}(t,\epsilon) := \int_s^t \frac{1}{\epsilon} \left\langle \partial_xF(r, \X(r)), \X(r+\epsilon) - \X(r) \right\rangle (N(r+\epsilon) - N(r)) \ud r,
\]
\begin{multline}
\nonumber
I_{12}(t,\epsilon) := \int_0^1 \int_s^t \frac{1}{\epsilon} \left\langle \partial_xF(r+\epsilon, a\X(r) + (1-a) \X(r+\epsilon)) - \partial_xF(r, \X(r)), \right .\\ 
\left . \X(r+\epsilon) - \X(r) \right\rangle (N(r+\epsilon) - N(r)) \ud r \ud a.
\end{multline}
Now we apply Proposition \ref{pr:DiGirolamiRusso37} with $B_1= H$, $B_2 = \mathbb{R}$, $\X=\M$, $\Y=N$, $\chi=\nu$ 
so that 
\begin{equation}
\label{eq:EQ11bis}
I_{11}(t,\epsilon) \xrightarrow{\epsilon\to 0} \int_s^t \tensor[_{\nu}]{\left\langle \partial_xF(r, \X(r)) , \ud \widetilde{[\X,N]}(r) \right\rangle}{_{\nu^*}}.
\end{equation}
Recalling that  $\X=\M+\A$, we remark that
 $[\X,N]_\nu$ exists and the $\nu^*$-valued process $\widetilde{[\X,N]}_\nu$
 equals 
\[
\widetilde{[\M,N]}_\nu + \widetilde{[\A,N]}_\nu = \widetilde{[\M,N]}_\nu,
\]
since $\A$ is a $\nu$-martingale orthogonal process. 
Taking into account  the formalism of Proposition \ref{pr:1},
Remark \ref{rm:RChiGlob} and
Proposition \ref{pr:thA} if $\Phi \in H \equiv H^{*}$, we have
\begin{eqnarray*}
  \tensor[_{\nu}] {\left\langle \Phi, \widetilde{[\M,N]_\nu} 
\right\rangle}{_{\nu^*}}
&=&  \tensor[_{\nu_0}]{ \left\langle \Phi, \widetilde{[\M,N]_\nu}
 \right\rangle}{_{\nu_0^*}} =
 \tensor[_{H^*}]{ \left\langle \Phi, \widetilde{[\M,N]}
 \right\rangle}{_{H^{**}}} \\
&=&
 \tensor[_{H^*}]{ \left\langle \Phi, [\M,N]^{cl}
 \right\rangle}{_{H}}.
\end{eqnarray*}
Consequently, it is not difficult to show that
the right-hand side of (\ref{eq:EQ11bis}) gives
\[
\int_s^t {}_{H^*}\left \langle \partial_x F(r, \X(r)) , \ud [\M,N]^{cl}(r)
 \right\rangle_{H} .
\]


For a fixed $\omega\in\Omega$ we consider the function $\partial_xF$ restricted to $[s,T]\times K$ where $K$ is the (compact) subset of $H$ obtained as convex hull of $\{ a \X(r_1) + (1-a) \X(r_2) \; : \; r_1, r_2 \in [s,T] \}$. $\partial_xF$ restricted to $[s,T]\times K$ is uniformly continuous with values in $\nu_0$.
Consequently, for  $\omega$-a.s. 
\begin{equation}
\label{eq:per-f-2}
|I_{12} (t,\epsilon)| \leq \int_s^T \delta\left (\partial_xF_{|[s,T]\times K} ; \epsilon +
\sup_{|r-t|\leq \epsilon} |\X(r) - \X(t)|_{\nu_0^*} \right )
\times  | \X(r+\epsilon) - \X(r)|_{\nu_0^*}
\frac{1}{\epsilon}|N(r+\epsilon) - N(r)| \ud r,
\end{equation}
where, for a uniformly continuous function 
$g:[s,T] \times K \rightarrow \nu_0$, $\delta(g ; \epsilon)$ is 
the modulus of continuity $\delta(g; \epsilon):= \sup_{|x-y|\leq \epsilon}
 |g(x) - g(y)|_{\nu_0}$. 
In previous formula we have identified $H$ with $H^{**}$ so that
$\vert x \vert_H \le \vert x \vert_{\nu_0^*}, \forall x \in H$.
So 
\eqref{eq:per-f-2} is lower than
\begin{multline} \label{E54}
 \delta\left (\partial_xF_{|[s,T]\times K} ; \epsilon +
\sup_{|s-t|\leq \epsilon} |\X(s) - \X(t)|_{\nu_0^*} \right )
\times \left ( \int_s^T \frac{1}{\epsilon}|N(r+\epsilon) - N(r)|^2 \ud r 
\int_s^T \frac{1}{\epsilon}|( \X(r+\epsilon) - \X(r) )|_{\nu_0^*}^2 \ud r \right )^{1/2}\\
=\delta\left (\partial_xF_{|[s,T]\times K} ; \epsilon +
\sup_{|s-t|\leq \epsilon} |\X(s) - \X(t)|_{\nu_0^*} \right )
\times \left ( \int_s^T \frac{1}{\epsilon}|N(r+\epsilon) - N(r)|^2 \ud r 
\int_s^T \frac{1}{\epsilon}|( \X(r+\epsilon) - \X(r) ) \otimes^2|_{\chi^*} \ud r \right )^{1/2},
\end{multline}
when $\varepsilon \rightarrow 0$, where we have used Lemma 
\ref{lm:che-era-remark},  for $\alpha = \pi$
with the usual identification.
The right-hand side of \eqref{E54}, of course converges to zero, 
 since $\X$ [resp. $N$] is a $\chi$-finite quadratic variation process [resp. a real finite quadratic variation process]
and $X$ is also continuous as a $\nu_0^*$-valued process.

To conclude the proof of the proposition we only need to show that $I_2(t,\epsilon)\xrightarrow{\epsilon\to 0}0$. This is relatively simple since 
\[
I_2(t,\epsilon) = \frac{1}{\epsilon} \int_s^t \Gamma(u,\epsilon) \ud 
N(u) + R(t,\epsilon),
\]
where $R(t, \epsilon)$ is a boundary term s.t. $R(t, \epsilon)\xrightarrow{\epsilon\to 0}0$ in probability and
\[
\Gamma(u,\epsilon) = \frac{1}{\epsilon} \int_{(u-\epsilon)_+}^{u} 
\left ( F(r+\epsilon, \X(r)) - F(r, \X(r)) \right ) \ud r.
\]
Since 
\[
\int_s^T (\Gamma(u,\epsilon))^2 \ud [N](u) \to 0,
\]
in probability, Problem 2.27, chapter 3 of \cite{KaratzasShreve88} implies that $I_2(\cdot, \epsilon) \to 0$ ucp. The result finally follows.
\end{proof}


\section{The case of stochastic  infinite dimensional stochastic differential equations }
\label{sec:SPDEs}

This section concerns applications of the stochastic calculus via regularization 
to mild solutions of infinite dimensional stochastic differential equations.

Assume, as in Subsection \ref{sub:Wiener} that $H$ and $U$ are real separable Hilbert spaces, $Q \in \mathcal{L}(U)$, $U_0:=Q^{1/2} (U)$. Assume that  $\W_Q=\{\W_Q(t):s\leq t\leq T\}$  is an $U$-valued $\mathscr{F}^t_s$-$Q$-Wiener process (with $\W_Q(s)=0$, $\mathbb{P}$ a.s.) and denote by  $\mathcal{L}_2(U_0, H)$ the Hilbert space of the Hilbert-Schmidt operators from $U_0$ to $H$. We adopt the conventions of
the mentioned subsection.

We denote   by  $A\colon D(A) \subseteq H \to H$ the generator of the
 $C_0$-semigroup $e^{tA}$ (for $t\geq 0$) on $H$.
The reader may consult   for instance \cite{BDDM07} Part II, Chapter 1 
for basic properties of $C_0$-semigroups.
$A^*$ denotes the adjoint of $A$, $D(A)$ and $D(A^*)$ are
 Banach (even Hilbert) spaces
 when endowed with the graph norm.


Let $b$ be a predictable
 process with values in $H$ and $\sigma$ 
be a
predictable process  with values in
 $\mathcal{L}_2(U_0, H)$ such that
\begin{equation}
\label{eq:hp-per-existence-mild}
\mathbb{P} \left [ \int_0^T |b(t)| + \|\sigma(t)\|^2_{\mathcal{L}_2(U_0, H)} \ud t < +\infty \right ] =1.
\end{equation}
We introduce the process
\begin{equation}
\label{eq:SPDE-mild}
\X(t) = e^{(t-s)A} x  + \int_s^t e^{(t-r)A} b(r) \ud r + \int_s^t e^{(t-r)A} \sigma(r) \ud \W_Q(r).
\end{equation}

\begin{Remark} 
\label{rm:examples-spde}
\begin{enumerate}
\item A mild solution to an equation of  type \eqref{eq:SPDEIntro} is a particular case of \eqref{eq:SPDE-mild}. Indeed, once existence and uniqueness of the solution $\X(\cdot)$ of \eqref{eq:SPDE-mild} is proved, we can simply take $b(r) = b(r, \X(r))$ and $\sigma(r) = \sigma(r, \X(r))$.
\item
Typical examples of SPDEs that can be rewritten in the form \eqref{eq:SPDEIntro} (see e.g. \cite{DaPratoZabczyk96} Part III) are for example stochastic heat (and more general parabolic) equations of the form $dy(s,\xi ) = \left[\Delta_{\xi }y(s,\xi ) +  b(s,y(s,\xi )) \right] \ud s + \sigma(s,y(s,\xi ))  dW_Q(s)(\xi)$ with zero Dirichlet or Neumann boundary conditions (and a suitable initial datum). There are also infinite-dimensional reformulations for heat equations with time-dependent boundary terms and/or boundary noise. Here we use the abstract and general formulation with a generic generator of a $C_0$-semigroup $A$, so we do not use regularizing properties that are typical of the heat semigroup. Other classes of SPDE can be rewritten in the same setting.
\item
A different example arises for instance from stochastic delay differential equations. The following simple example (but more general setting can be studied, see also Chapter 10 in \cite{DaPratoZabczyk96}), is
\begin{equation}
\ud y(s)=\left( a_{0}y(s)+ \int_{-R}^0 a_1(r) y(s+r) \ud r \right) \ud s + \sigma \ud W_{0}(s), 
\end{equation}
(coupled with the initial datum) where $R>0$ is a positive real constant, $a_0$ and $\sigma$ are real numbers, $a_1(r)$ an element of $L^2(-R,0)$ and $W_0$ a real Brownian motion.  It is reformulated for instance 
in the infinite dimension abstract
 setting  in \cite{GozziMarinelliSavin09}.
\end{enumerate}
\end{Remark}

\bigskip

We define 
\begin{equation}
\label{eq:defY}
\Y(t):= \X(t) - \int_s^t b(r) \ud r - \int_s^t \sigma (r) \ud \W_Q(r) -x.
\end{equation}

\begin{Lemma}
\label{lm:conOndrejat}
Let $b$  [resp. $\sigma$] be a 
predictable process with values 
 in $H$  
[resp. with values  in $\mathcal{L}_2(U_0, H)$] such that 
(\ref{eq:hp-per-existence-mild}) is satisfied. Let $\X(t)$ be defined by (\ref{eq:SPDE-mild}) and $\Y$ defined by (\ref{eq:defY}). If $z\in D(A^*)$ we have
\begin{equation}
\left\langle \Y(t) , z \right\rangle = \int_s^t \left\langle \X(r), A^*z \right\rangle \ud r.
\end{equation}
\end{Lemma}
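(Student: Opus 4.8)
The plan is to pair the mild formula \eqref{eq:SPDE-mild} with $z$, expand each of its three summands by means of the elementary identity
\[
\langle e^{hA}y, z\rangle = \langle y, z\rangle + \int_0^h \langle e^{uA}y, A^*z\rangle\,\ud u, \qquad y\in H,\ z\in D(A^*),\ h\geq 0,
\]
and then reorganise the resulting iterated (Lebesgue and stochastic) integrals via Fubini. The displayed identity is the only place where the hypothesis $z\in D(A^*)$ enters: since $\langle e^{uA}y, z\rangle = \langle y, e^{uA^*}z\rangle$ and $e^{\cdot A^*}$ is a $C_0$-semigroup with generator $A^*$, the map $u\mapsto \langle y, e^{uA^*}z\rangle$ is of class $C^1$ with derivative $\langle y, e^{uA^*}A^*z\rangle = \langle e^{uA}y, A^*z\rangle$, and the identity follows upon integrating from $0$ to $h$.

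First I would handle the deterministic part. Applying the identity with $y=x$, $h=t-s$ gives $\langle e^{(t-s)A}x, z\rangle = \langle x,z\rangle + \int_s^t \langle e^{(u-s)A}x, A^*z\rangle\,\ud u$. For the drift summand one has, for each $r$, $\langle e^{(t-r)A}b(r), z\rangle = \langle b(r),z\rangle + \int_r^t \langle e^{(u-r)A}b(r), A^*z\rangle\,\ud u$; integrating this over $r\in[s,t]$ and applying the deterministic Fubini theorem on the triangle $\{s\le r\le u\le t\}$ — legitimate since $\|e^{hA}\|\le M e^{cT}$ for $h\in[0,T]$ and $\int_s^T |b(r)|\,\ud r<+\infty$ a.s. by \eqref{eq:hp-per-existence-mild} — one obtains
\[
\int_s^t \langle e^{(t-r)A}b(r), z\rangle\,\ud r = \int_s^t \langle b(r), z\rangle\,\ud r + \int_s^t\Big\langle \int_s^u e^{(u-r)A}b(r)\,\ud r,\ A^*z\Big\rangle \ud u.
\]

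Next, the stochastic summand. For fixed $r$ and $\xi\in U_0$ the identity gives $\langle e^{(t-r)A}\sigma(r)\xi, z\rangle = \langle \sigma(r)\xi, z\rangle + \int_r^t \langle e^{(u-r)A}\sigma(r)\xi, A^*z\rangle\,\ud u$; carrying this inside the stochastic integral and invoking the stochastic Fubini theorem (e.g. Theorem 4.18 of \cite{DaPratoZabczyk92}) yields
\[
\Big\langle \int_s^t e^{(t-r)A}\sigma(r)\,\ud\W_Q(r),\ z\Big\rangle = \Big\langle\int_s^t \sigma(r)\,\ud\W_Q(r),\ z\Big\rangle + \int_s^t \Big\langle \int_s^u e^{(u-r)A}\sigma(r)\,\ud\W_Q(r),\ A^*z\Big\rangle \ud u.
\]
Summing the three contributions and using the definition \eqref{eq:defY} of $\Y$, the terms $\langle x,z\rangle$, $\int_s^t\langle b(r),z\rangle\,\ud r$ and $\langle\int_s^t\sigma(r)\,\ud\W_Q(r),z\rangle$ cancel, while the three remaining integrals combine, by \eqref{eq:SPDE-mild} evaluated at the running time $u$, into $\int_s^t \langle \X(u), A^*z\rangle\,\ud u$, which is the claim.

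The step I expect to be the main obstacle is the rigorous use of the stochastic Fubini theorem, whose hypotheses require suitable integrability of $\sigma$. As in the proof of Theorem \ref{th:integral-forrward=ito-brownian}, this will be handled by localising along a sequence of stopping times reducing to the case $\mathbb{E}\int_s^T \|\sigma(r)\|^2_{\mathcal{L}_2(U_0,H)}\,\ud r<+\infty$, establishing the identity in that case, and then passing to the limit in probability (the exceptional null set being independent of $z$).
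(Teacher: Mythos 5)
Your argument is correct, but it is worth noting that the paper does not actually prove this lemma: its entire proof is the citation ``See \cite{Ondrejat04} Theorem 12.'' What you have written out is essentially the standard self-contained proof of the fact that a mild solution is a weak (analytically weak) solution, which is the content of the cited result. The key identity $\langle e^{hA}y,z\rangle=\langle y,z\rangle+\int_0^h\langle e^{uA}y,A^*z\rangle\,\ud u$ is correctly justified via the adjoint semigroup (legitimate here because $H$ is Hilbert, hence reflexive, so $(e^{tA})^*$ is a $C_0$-semigroup with generator $A^*$), the deterministic Fubini step on the triangle $\{s\le r\le u\le t\}$ is harmless under \eqref{eq:hp-per-existence-mild}, and the stochastic Fubini step, after localization to make $\mathbb{E}\int_s^T\|\sigma(r)\|^2_{\mathcal{L}_2(U_0,H)}\,\ud r$ finite, applies because $\|(e^{(u-r)A}\sigma(r))^*A^*z\|_{U_0}\le Me^{\omega T}\|\sigma(r)\|_{\mathcal{L}_2(U_0,H)}|A^*z|$. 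Two minor points you should make explicit if you write this up in full: (a) pulling the functional $\langle\cdot,z\rangle$ inside the stochastic integral (i.e. $\langle\int\Phi\,\ud\W_Q,z\rangle=\int\Phi^*z\,\ud\W_Q$) is itself a small lemma, standard but worth a reference; (b) the identity is first obtained for each fixed $t$ and $z$ a.s., and one upgrades to a single null set using continuity of both sides in $t$ and separability of $D(A^*)$ in the graph norm. The trade-off is the obvious one: the paper's citation is shorter, while your direct computation makes transparent exactly where $z\in D(A^*)$ and the integrability hypothesis \eqref{eq:hp-per-existence-mild} are used.
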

\begin{proof}
See \cite{Ondrejat04} Theorem 12.
\end{proof}

We want now to prove that $\Y$ has zero-$\chi$-quadratic variation for a suitable space $\chi$. We will see that the space
\begin{equation}
\label{eq:def-chi}
\bar\chi:= D(A^*)\hat\otimes_\pi D(A^*).
\end{equation}
does the job. We set $\bar\nu_0:= D(A^*)$ which is clearly continuously
embedded into $H^*$.

By Lemma \ref{lm:era52}, $\bar \chi$ is a Chi-subspace of 
 $(H\hat\otimes_\pi H)^*$.

\begin{Proposition}
\label{pr:Y-zero-chi-quad-var}
The process $\Y$ has zero $\bar\chi$-quadratic variation.
\end{Proposition}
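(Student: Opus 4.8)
The plan is to verify the hypothesis of item 2 of Remark \ref{RDGR} for the pair $(\Y,\Y)$ and the Chi-subspace $\chi=\bar\chi$: it suffices to show that
\[
A(\varepsilon):=\int_{s}^{T}\frac{\left|J\big((\Y(r+\varepsilon)-\Y(r))\otimes(\Y(r+\varepsilon)-\Y(r))\big)\right|_{\bar\chi^{*}}}{\varepsilon}\,\ud r\xrightarrow[\varepsilon\to 0^{+}]{}0
\]
in probability, since this yields at once conditions \textbf{H1} and \textbf{H2} of Definition \ref{def:covariation} and the vanishing of $\widetilde{[\Y,\Y]}_{\bar\chi}$, which is exactly the claim. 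The first step is to compute the integrand. Recall $\bar\chi=\bar\nu_{0}\hat\otimes_{\pi}\bar\nu_{0}$ with $\bar\nu_{0}=D(A^{*})$ reflexive and $\pi$ a reasonable norm, and that $H$ embeds into $(D(A^{*}))^{*}$ through the Gelfand triple $D(A^{*})\hookrightarrow H\equiv H^{*}\hookrightarrow (D(A^{*}))^{*}$, the duality extending $\langle\cdot,\cdot\rangle$. Then Lemma \ref{lm:che-era-remark}, read with $B_{1}=B_{2}=D(A^{*})$ (together with Remark \ref{RTensBanach}), gives, for $a=\Y(r+\varepsilon)-\Y(r)\in H\subset (D(A^{*}))^{*}$,
\[
\left|J(a\otimes a)\right|_{\bar\chi^{*}}=|a|_{(D(A^{*}))^{*}}^{2},\qquad |a|_{(D(A^{*}))^{*}}=\sup_{z\in D(A^{*}),\,|z|_{D(A^{*})}\le 1}\big|\langle a,z\rangle\big|.
\]

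Next I would estimate this dual norm via Lemma \ref{lm:conOndrejat}. For $z\in D(A^{*})$ with $|z|_{D(A^{*})}\le 1$ one has, using $|A^{*}z|\le|z|_{D(A^{*})}\le 1$ for the graph norm,
\[
\big|\langle \Y(r+\varepsilon)-\Y(r),z\rangle\big|=\Big|\int_{r}^{r+\varepsilon}\langle\X(u),A^{*}z\rangle\,\ud u\Big|\le\int_{r}^{r+\varepsilon}|\X(u)|\,\ud u,
\]
so that $|\Y(r+\varepsilon)-\Y(r)|_{(D(A^{*}))^{*}}\le\int_{r}^{r+\varepsilon}|\X(u)|\,\ud u$. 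The remainder is routine: by Cauchy--Schwarz and Fubini, extending $\X$ by $\X(T)$ on $[T,T+\varepsilon]$ as in Notation \ref{Not00},
\[
A(\varepsilon)\le\frac{1}{\varepsilon}\int_{s}^{T}\Big(\int_{r}^{r+\varepsilon}|\X(u)|\,\ud u\Big)^{2}\ud r\le\int_{s}^{T}\int_{r}^{r+\varepsilon}|\X(u)|^{2}\,\ud u\,\ud r\le\varepsilon\int_{s}^{T+\varepsilon}|\X(u)|^{2}\,\ud u.
\]
Since the boundedness of the semigroup and \eqref{eq:hp-per-existence-mild} ensure $\int_{s}^{T+1}|\X(u)|^{2}\,\ud u<\infty$ a.s., the right-hand side tends to $0$ almost surely, hence in probability, and item 2 of Remark \ref{RDGR} concludes the proof.

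The only genuinely delicate point is the identification $|J(a\otimes a)|_{\bar\chi^{*}}=|a|_{(D(A^{*}))^{*}}^{2}$: one has to check that the action of $J(a\otimes a)$ on $\bar\chi=D(A^{*})\hat\otimes_{\pi}D(A^{*})$ is the one inherited from the $H$-duality, so that Lemma \ref{lm:che-era-remark} indeed applies with $a^{*}=b^{*}$ equal to the image of $a$ in $(D(A^{*}))^{*}$. Once this is in place, everything reduces to Cauchy--Schwarz, Fubini, and the almost sure square-integrability of the paths of $\X$.
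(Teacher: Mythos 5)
Your proof is correct and follows essentially the same route as the paper: reduce to showing $A(\varepsilon)\to 0$ in probability (the paper invokes Lemma 3.18 of Di Girolami--Russo, i.e.\ item 2 of Remark \ref{RDGR}), identify $\left|(\Y(r+\varepsilon)-\Y(r))^{\otimes 2}\right|_{\bar\chi^*}$ with $\sup_{|\phi|_{\bar\nu_0},|\psi|_{\bar\nu_0}\le 1}|\langle \Y(r+\varepsilon)-\Y(r),\phi\rangle\langle \Y(r+\varepsilon)-\Y(r),\psi\rangle|$ (which is exactly your $|a|^2_{(D(A^*))^*}$), and control it via Lemma \ref{lm:conOndrejat} and the graph-norm bound $|A^*z|\le|z|_{D(A^*)}$. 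The only cosmetic difference is in the last step, where the paper bounds $\frac{1}{\varepsilon}\int_s^T\bigl(\int_r^{r+\varepsilon}|\X(\xi)|\,\ud\xi\bigr)^2\ud r$ by $\varepsilon\sup_{\xi\in[s,T]}|\X(\xi)|^2$ using continuity of $\X$, whereas your Cauchy--Schwarz plus Fubini argument needs only a.s.\ square-integrability of the paths.
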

\begin{proof}
Observe that, thank to Lemma 3.18 in \cite{DiGirolamiRusso11} it will be enough
 to show that 
\[
I(\epsilon):= \frac{1}{\epsilon}\int_s^T |(\Y(r+\epsilon) - \Y(r))\otimes^2|_{\bar\chi^*}
 \ud r \xrightarrow{\epsilon \to 0}0, \qquad \text{in probability}.
\]
In fact, identifying $\bar\chi^*$ with $B(\bar\nu_0, \bar\nu_0 ; \mathbb{R})$,
we get
\begin{multline}
I(\epsilon) = \frac{1}{\epsilon} \int_s^T \sup_{|\phi|_{\bar{\nu}_0},\,
 |\psi|_{\bar{\nu}_0} \leq 1} \left | \left\langle (\Y(r+\epsilon) - \Y(r)), \phi \right \rangle \left\langle (\Y(r+\epsilon) - \Y(r)), \psi \right \rangle \right | \ud r \\
\leq 
\frac{1}{\epsilon} \int_s^T \sup_{|\phi|_{\bar{\nu_0}}, \, |\psi|_{\bar{\nu}_0} \leq 1} \left \{ \left | \int_r^{r+\epsilon} \left\langle (\X(\xi), A^*\phi \right \rangle \ud \xi \right |  
\left | \int_r^{r+\epsilon} \left\langle (\X(\xi), A^*\psi \right \rangle \ud \xi \right | \right \} \ud r,
\end{multline}
where we have used Lemma \ref{lm:conOndrejat}. This  is smaller than
\[
\frac{1}{\epsilon} \int_s^T  \left | \int_r^{r+\epsilon} |\X(\xi)| \ud \xi \right |^2 \ud r
\leq \epsilon \sup_{\xi\in [s,T]} |\X(\xi)|^2,
\]
which converges to zero almost surely.
\end{proof}

\begin{Corollary}
\label{cor:X-barchi-Dirichlet}
The process $\X$ is a $\bar \chi$-Dirichlet process.
 Moreover it
 is also a $\bar\chi$ finite quadratic variation process
 and a $\bar\nu_0\hat\otimes_\pi \mathbb{R}$-weak-Dirichlet process.
\end{Corollary}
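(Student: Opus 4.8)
The plan is to decompose $\X$ according to formula \eqref{eq:SPDE-mild} and identify a local martingale part and a zero $\bar\chi$-quadratic variation part. Write $\X(t) = \M(t) + \A(t)$ where
\[
\M(t) := \int_s^t \sigma(r)\ud\W_Q(r), \qquad \A(t) := e^{(t-s)A}x + \int_s^t b(r)\ud r + \Y(t),
\]
with $\Y$ defined in \eqref{eq:defY}. The term $\M$ is a continuous local martingale by item (iii) (or (v)) of Proposition \ref{PChainRule}, using the integrability hypothesis \eqref{eq:hp-per-existence-mild} on $\sigma$. It remains to check that $\A$ has zero $\bar\chi$-quadratic variation and that $\A(s) = 0$ after the usual extension convention (indeed $\A(s) = x - x = 0$ once we subtract the initial value; more precisely the decomposition is $\X = \M + \A$ with $\A$ starting at $x$, and one may equivalently center by absorbing the deterministic starting term, which has bounded variation and hence zero scalar — a fortiori zero $\bar\chi$ — quadratic variation by Proposition \ref{RBVQV} and Remark \ref{rm:RGlobTens}).

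The core of the argument is additivity of the zero $\bar\chi$-quadratic variation property: $t\mapsto e^{(t-s)A}x$ and $t\mapsto \int_s^t b(r)\ud r$ are continuous $H$-valued processes of bounded variation (the semigroup orbit of a fixed vector is continuous, and under \eqref{eq:hp-per-existence-mild} the drift integral has finite variation a.s.), so by Proposition \ref{RBVQV} they have zero scalar quadratic variation, hence by Remark \ref{rm:RGlobTens} zero global quadratic variation, hence by Remark \ref{rm:RChiGlob} zero $\bar\chi$-quadratic variation; and $\Y$ has zero $\bar\chi$-quadratic variation by Proposition \ref{pr:Y-zero-chi-quad-var}. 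A short bilinearity computation (the $\bar\chi$-covariation is bilinear, and the cross terms between two zero-quadratic-variation processes vanish by Cauchy--Schwarz, controlling condition \textbf{H1} and forcing $\widetilde{[\cdot,\cdot]}=0$ via Remark \ref{RDGR}) then yields that the sum $\A$ has zero $\bar\chi$-quadratic variation. This establishes that $\X$ is a $\bar\chi$-Dirichlet process in the sense of Definition \ref{def:chi-Dirichlet-process}.

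The two remaining assertions follow from Corollary \ref{Cor:codi}, once we verify its hypotheses, i.e. those of Proposition \ref{pr:dirichlet-weak-dirichlet}: here $\bar\chi = \bar\nu_0\hat\otimes_\pi\bar\nu_0$ with $\bar\nu_0 = D(A^*)$ continuously embedded in $H^*$ (as noted after \eqref{eq:def-chi}), so taking $\chi_0 = \bar\nu_0$ the proposition applies. Item (iii) of Corollary \ref{Cor:codi} gives that $\X$ is a $\bar\chi$-finite-quadratic-variation process; item (i) applied with $H_1 = \mathbb{R}$ (so that $\nu = \bar\nu_0\hat\otimes_\pi\mathbb{R}$) gives that $\X$ is a $\bar\nu_0\hat\otimes_\pi\mathbb{R}$-weak-Dirichlet process. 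The main (modest) obstacle is purely bookkeeping: making sure the additivity of the zero $\bar\chi$-quadratic variation property is correctly invoked — in particular that condition \textbf{H1} for the sum is obtained from the individual \textbf{H1} conditions together with the polarization/Cauchy--Schwarz bound on cross terms — and that the deterministic initial term is correctly handled so that the orthogonal component vanishes at time $s$.
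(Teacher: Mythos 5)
There is a genuine gap, and it sits exactly where you flagged the ``bookkeeping'' risk. With $\Y$ as defined in \eqref{eq:defY}, one has $\X(t)=x+\int_s^t b(r)\ud r+\int_s^t\sigma(r)\ud\W_Q(r)+\Y(t)$; the term $e^{(t-s)A}x-x$ is already absorbed into $\Y$. Your decomposition $\M(t)=\int_s^t\sigma(r)\ud\W_Q(r)$, $\A(t)=e^{(t-s)A}x+\int_s^t b(r)\ud r+\Y(t)$ therefore gives $\M(t)+\A(t)=\X(t)+\bigl(e^{(t-s)A}-I\bigr)x\neq\X(t)$: you have counted the semigroup orbit twice, so you are not decomposing $\X$ at all. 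Moreover, the auxiliary claim you need for that extra term --- that $t\mapsto e^{(t-s)A}x$ is of bounded variation --- is false for a general $C_0$-semigroup and general $x\in H$: the orbit is merely continuous, and already for an analytic semigroup with $x\notin D(A)$ the bound $\|Ae^{tA}\|\leq C/t$ only gives a logarithmically divergent variation near $t=s$. So even if the decomposition were repaired to include this term separately, Proposition \ref{RBVQV} would not apply to it.

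The correct route (and the paper's) is to take $\M(t)=x+\int_s^t\sigma(r)\ud\W_Q(r)$ (a local martingale by Proposition \ref{PChainRule}) and $\A=\V+\Y$ with $\V(t)=\int_s^t b(r)\ud r$; then $\A(s)=0$ automatically, $\V$ is the only bounded variation piece, and the awkward semigroup contribution never has to be isolated because Proposition \ref{pr:Y-zero-chi-quad-var} (via Lemma \ref{lm:conOndrejat}) shows directly that $\Y$ --- which contains $\bigl(e^{(t-s)A}-I\bigr)x$ --- has zero $\bar\chi$-quadratic variation. Your treatment of the cross terms (Cauchy--Schwarz between a zero scalar quadratic variation process and a zero $\bar\chi$-quadratic variation process, i.e.\ Proposition \ref{PFQVZ} plus bilinearity) and your deduction of the last two assertions from Corollary \ref{Cor:codi} with $\chi_0=\bar\nu_0=D(A^*)$ are in line with the paper; only the identification of the orthogonal part needs to be fixed as above.
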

\begin{proof}
For $ t \in [s,T]$, we have $\X(t) = \M(t) + \A(t)$, where 
\begin{eqnarray*}
\M(t) = x + \int_s^t \sigma (r) \ud \W_Q(r), \\
\A(t) = \V(t) + \Y(t), \\
\V(t) = \int_s^t b(r) dr.
\end{eqnarray*}
$\M$ is a local martingale by Proposition \ref{PChainRule} (i)
and $\V$ is a bounded variation process. 
By  Proposition \ref{PFQVZ} and Remark \ref{rm:RChiGlob},
we get
$$ [\V,\V]_{\bar{\chi}} =  [\V,\Y]_{\bar{\chi}} = [\Y,\V]_{\bar{\chi}} = 0.$$ 
By Proposition \ref{pr:Y-zero-chi-quad-var} and the bilinearity of 
the $\bar{\chi}$-covariation, it yields that $\A$ has a zero
$\bar{\chi}$-quadratic variation and so $\X$ is a $\bar{\chi}$-Dirichlet process.
 The second part of the statement is a consequence of Corollary  
\ref{Cor:codi}.
\end{proof}

In the sequel we will denote by $UC([s,T]\times H; D(A^*))$ the 
$F$-space of the functions \\ $G: [s,T] \times H \rightarrow  D(A^*)$ 
 which are uniformly continuous on each closed ball,
 equipped with the topology of the uniform 
 convergence on closed balls.

The theorem below  generalizes for some aspects the It\^o formula
of \cite{DiGirolamiRusso11}, i.e. their Theorem 5.2, 
to the case when the second derivatives do not necessarily 
belong to the Chi-subspace $\chi$.
\begin{Theorem}
\label{th:exlmIto}
 Let $F\colon [s,T] \times H \to \mathbb{R}$ of class $C^{1,2}$ 
which belongs to $ UC([s,T]\times H; D(A^*))$.

Let $\X$ be an  $H$-valued process process
admitting a $\bar\chi$-finite quadratic variation.
We suppose the following.
\begin{itemize}
\item[(i)] There exists a (c\`adl\`ag) bounded variation process $C\colon [s,T] \times \Omega \to (H\hat\otimes_\pi H)$ such that, for all $t$ in $[s,T]$ and $\phi\in \bar\chi$, 
\[
C(t,\cdot)(\phi) = [\X, \X]_{\bar\chi}(\phi)(t, \cdot) \qquad a.s.
\]
\item[(ii)] For every continuous function $\Gamma\colon [s,T] \times H \to D(A^*)$ the integral
\begin{equation} \label{ForwI}
\int_s^t \left\langle \Gamma(r,\X(r)), \ud ^- \X(r) \right\rangle
\end{equation}
exists.
\end{itemize}
Then
\begin{multline}
\label{eq:Ito}
F(t,\X(t)) = F(s, \X(s)) + \int_s^t \left\langle \partial_rF(r, \X(r)), \ud^- \X(r) \right\rangle \\
+\frac12 \int_s^t  {}_{(H \hat\otimes_{\pi} H)^*}  
\left \langle \partial_{xx}^2 F(r, \X(r)) , \ud C(s) 
\right\rangle_{H \hat\otimes_{\pi} H} 
  + \int_s^t \partial_r F(r, \X(r)) \ud r.
\end{multline}
\end{Theorem}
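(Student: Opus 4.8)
The plan is to reduce the statement to the Itô-type formula \eqref{DGR-Ito} of \cite{DiGirolamiRusso11} (their Theorem 5.2) by a Taylor expansion of $F(t+\epsilon,\X(t+\epsilon))$ around $(t,\X(t))$, and to control each of the resulting $\epsilon$-approximation terms separately, exactly as in the proof of \eqref{DGR-Ito}. The only structural difference with respect to \eqref{DGR-Ito} is that here $\partial^2_{xx}F$ need not take values in $\bar\chi$; instead we only know that $\partial_xF$ lands in $D(A^*)=\bar\nu_0$ and that $\X$ has a $\bar\chi$-quadratic variation, so the second-order term must be read in the pairing between $(H\hat\otimes_\pi H)^*$ and $H\hat\otimes_\pi H$ against the bounded-variation process $C$ furnished by hypothesis (i). The heart of the matter is therefore to show that the second-order $\epsilon$-approximation, built from the increments $(\X(r+\epsilon)-\X(r))\otimes^2$ paired with $\partial^2_{xx}F$, converges to $\tfrac12\int_s^t{}_{(H\hat\otimes_\pi H)^*}\langle\partial^2_{xx}F(r,\X(r)),\ud C(r)\rangle_{H\hat\otimes_\pi H}$, while the first-order term yields the forward integral \eqref{ForwI} with $\Gamma=\partial_xF$, which exists by hypothesis (ii).

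First I would write, for fixed $t\in[s,T]$,
\[
\frac{1}{\epsilon}\bigl(F(r+\epsilon,\X(r+\epsilon))-F(r,\X(r))\bigr)
= \frac{1}{\epsilon}\bigl(F(r+\epsilon,\X(r))-F(r,\X(r))\bigr)
+\frac{1}{\epsilon}\bigl(F(r+\epsilon,\X(r+\epsilon))-F(r+\epsilon,\X(r))\bigr),
\]
integrate in $r$ over $[s,t]$ and let $\epsilon\to0$. The first piece, by continuity of $\partial_rF$ and a Fubini/boundary-term argument identical to the one used for $I_2$ in the proof of Theorem \ref{th:prop6}, converges to $\int_s^t\partial_rF(r,\X(r))\,\ud r$. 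For the second piece I would apply the second-order Taylor formula with integral remainder to $x\mapsto F(r+\epsilon,x)$ between $\X(r)$ and $\X(r+\epsilon)$, producing a first-order term $\tfrac1\epsilon\langle\partial_xF(r+\epsilon,\X(r)),\X(r+\epsilon)-\X(r)\rangle$ and a second-order term
\[
\int_0^1(1-a)\,\frac1\epsilon\,{}_{(H\hat\otimes_\pi H)^*}\!\!\left\langle\partial^2_{xx}F\bigl(r+\epsilon,a\X(r)+(1-a)\X(r+\epsilon)\bigr),(\X(r+\epsilon)-\X(r))\otimes^2\right\rangle_{H\hat\otimes_\pi H}\ud a .
\]
The first-order term, after the usual splitting into $\langle\partial_xF(r,\X(r)),\cdot\rangle$ plus a modulus-of-continuity remainder and using that $\partial_xF$ is $\bar\nu_0$-valued and continuous, converges in probability to $\int_s^t\langle\partial_xF(r,\X(r)),\ud^-\X(r)\rangle$ by hypothesis (ii) (the remainder is killed by the $\bar\chi$-finite-quadratic-variation bound on $\X$, exactly as $I_{12}$ is handled in Theorem \ref{th:prop6}). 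For the second-order term I would first replace $\partial^2_{xx}F(r+\epsilon,a\X(r)+(1-a)\X(r+\epsilon))$ by $\partial^2_{xx}F(r,\X(r))$, the error being controlled by the uniform continuity of $\partial^2_{xx}F$ on the compact set $[s,T]\times K$ (with $K$ the convex hull of the path of $\X$) times the scalar quadratic-variation control $\tfrac1\epsilon\int_s^T|\X(r+\epsilon)-\X(r)|^2\ud r$; then I would invoke the $\bar\chi$-quadratic-variation convergence together with hypothesis (i) to pass from $\tfrac1\epsilon\int_s^\cdot(\X(r+\epsilon)-\X(r))\otimes^2\,\ud r$ to $\ud C$, in the spirit of Proposition \ref{pr:DiGirolamiRusso37}, and finally $\int_0^1(1-a)\,\ud a=\tfrac12$ gives the stated coefficient.

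The main obstacle is the second-order term: since $\partial^2_{xx}F(r,\X(r))$ is only an element of $(H\hat\otimes_\pi H)^*$ and is \emph{not} assumed to lie in the separable Chi-subspace $\bar\chi$, Proposition \ref{pr:DiGirolamiRusso37} does not apply verbatim, and one cannot directly use the $\bar\chi^*$-valued bounded-variation process $\widetilde{[\X,\X]}_{\bar\chi}$. The way around this is precisely hypothesis (i): it asserts that this $\bar\chi^*$-valued measure is actually realized by an $(H\hat\otimes_\pi H)$-valued bounded variation process $C$, so that the pairing $\langle\partial^2_{xx}F(r,\X(r)),\ud C(r)\rangle$ makes sense for the genuine dual element $\partial^2_{xx}F(r,\X(r))\in(H\hat\otimes_\pi H)^*$; one then checks the convergence of the $\epsilon$-approximations against this $C$ by first testing on the dense set of elementary tensors $h^*\otimes k^*$ with $h^*,k^*\in\bar\nu_0$ (where Lemma \ref{lm:L411}-type identities and Proposition \ref{pr:DiGirolamiRusso37} do apply), obtaining a uniform bound via Lemma \ref{lm:che-era-remark} and the $\bar\chi$-quadratic-variation property, and then extending by a Banach--Steinhaus argument as in the proof of Proposition \ref{PDenseUnique}. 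Assembling the three limits yields \eqref{eq:Ito}.
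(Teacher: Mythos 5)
Your overall architecture (Taylor expansion in $\epsilon$, split into a time term, a first-order term and a second-order term, identify each limit) is the natural first attempt, but it is not the paper's route and, more importantly, it breaks down at the second-order term. The error you propose to control --- replacing $\partial^2_{xx}F$ at the intermediate point $\bigl(r+\epsilon,a\X(r)+(1-a)\X(r+\epsilon)\bigr)$ by $\partial^2_{xx}F(r,\X(r))$ --- can only be estimated by pairing an element of $(H\hat\otimes_\pi H)^*$ (the difference of second derivatives, which is \emph{not} assumed to lie in $\bar\chi$) against $(\X(r+\epsilon)-\X(r))\otimes^2$; the resulting bound involves $\frac1\epsilon\int_s^T|\X(r+\epsilon)-\X(r)|_H^2\,\ud r$, i.e.\ the scalar quadratic variation approximation, which is not assumed finite and is typically infinite for the mild solutions this theorem targets. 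The same obstruction kills your fallback via density and Banach--Steinhaus: the operators $\psi\mapsto\frac1\epsilon\int_s^t\langle\psi,(\X(r+\epsilon)-\X(r))\otimes^2\rangle\,\ud r$ on $(H\hat\otimes_\pi H)^*$ have operator norm equal to that same uncontrolled scalar quantity, so no uniform bound is available; and in any case $\bar\chi=D(A^*)\hat\otimes_\pi D(A^*)$ is only weak-$*$ dense in $(H\hat\otimes_\pi H)^*$ (Lemma \ref{lemmaTensor}), not norm dense, so the extension argument of Proposition \ref{PDenseUnique} does not transfer to the varying test element $\partial^2_{xx}F(r,\X(r))$. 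A milder version of the same issue affects your first-order remainder, where $\frac1\epsilon\int_s^t|\X(r+\epsilon)-\X(r)|_{\bar\nu_0^*}\,\ud r$ is only of order $\epsilon^{-1/2}$ times the square root of the $\bar\chi$-quadratic-variation approximation, so a modulus-of-continuity factor alone does not make it vanish.

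The paper avoids the $\epsilon$-analysis altogether: it approximates $F$ rather than expanding it. Setting $F_N=F\circ P_N$, with $P_N$ the orthogonal projection onto the span of finitely many basis vectors chosen inside $D(A^*)$, one gets that $\partial^2_{xx}F_N(x)$ is a finite combination of the $e_i^*\otimes e_j^*$ and hence lies in $\bar\chi$; therefore Theorem 5.2 of \cite{DiGirolamiRusso11} applies verbatim to $F_N$ and, via hypothesis (i), yields formula \eqref{eq:Ito-N} with $\ud C$ in the second-order term. One then lets $N\to\infty$: the forward-integral term converges by a Banach--Steinhaus argument applied to the maps $G\mapsto\int_s^\cdot\langle G(r,\X(r)),\ud^-\X(r)\rangle$ on $C([s,T]\times H;D(A^*))$ (this is where hypothesis (ii) enters), and the second-order term converges because $\partial^2_{xx}F_N\to\partial^2_{xx}F$ uniformly on compacts in the $(H\hat\otimes_\pi H)^*$-norm while it is integrated against the $(H\hat\otimes_\pi H)$-valued bounded-variation measure $\ud C$ --- no control of increments of $\X$ in the $H$-norm is ever needed. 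To salvage a direct proof you would have to run your Taylor argument at the level of the cylindrical approximations $F_N$ first, which is exactly what invoking the already-established It\^o formula for $F_N$ accomplishes.
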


Before the proof  of the theorem we make some comments.

\begin{Remark} \label{RRR}
A consequence of assumption (i) of Theorem \ref{th:exlmIto} is the existence
of a $\mathbb{P}$-null set $O$ such that, for every $t\in [s,T]$, $\omega \not\in O$,
\[
\widetilde{[\X, \X]}_{\bar\chi}(\phi)(t,\omega) = C(t,\omega)(\phi),
\]
for every $\phi\in D(A^*)\hat\otimes_{\pi} D(A^*)$. In other words the
 $\bar\chi$-quadratic variation of $\X$ coincides with $C$.
\end{Remark}

\begin{Remark}
\label{rm:per-lemmaIto}
The conditions (i) and (ii) of Theorem \ref{th:exlmIto} are verified
 if for instance $\X = \M + \V + \S $, where  $\M$ is a local martingale,
  $\V$ is an $H$-valued bounded variation process, 
and  $\S$ is a process verifying
\[
\left\langle \S, h \right\rangle(t) = \int_s^t \left\langle \Z (r) ,
 A^* h \right\rangle \ud r, \qquad \text{for all } h\in D(A^*),
\]
for some  measurable process $\Z$ with
 $\int_s^T |\Z(r)|^2 \ud r <+\infty$ a.s.

Indeed, by Lemma \ref{lm:L411}, $\M$ admits a global quadratic
quadratic variation which can be identified with $[\M, \M]^{cl}$.
\\
On the other hand $\A = \V + \S$ has a zero $\bar \chi$-quadratic variation,
by Proposition \ref{PFQVZ} and the  bilinearity character of the 
$\bar \chi$-covariation.
$\X$  is therefore a $\bar \chi$-Dirichlet process.
By Corollary \ref{Cor:codi} and again the bilinearity of the
$\bar \chi$-covariation, we obtain that
$\X$ has a finite $\bar \chi$-quadratic variation.
Taking also into account Lemma \ref{lm:L411}
and Remark \ref{rm:RChiGlob}, we get 
 $ \widetilde{[\X,\X]}_{\bar \chi}(\Phi)(\cdot) = 
\langle [\M, \M]^{cl}, \Phi \rangle$ if $\Phi \in \bar \chi$.
Consequently, we can set $C =  [\M, \M]^{cl}$ and condition (i) 
is verified.
\\
  To prove (ii) consider a continuous
 function $\Gamma\colon [s,T] \times H \to D(A^*)$. The integral
of   $(\Gamma(r,\X(r)))$ w.r.t. the semimartingale  $\M + \V$ where
 $\M(t) = x + \int_s^t \sigma (r) \ud W_Q(r) $  exists and  equals
  the classical It\^o integral, by
   Proposition \ref{th:integral-forrward=ito-martingale} and
Proposition \ref{ItoBV}. 
Therefore, 
 we only have to prove that
\[
\int_s^t \left\langle \Gamma(r,\X(r)), \ud^- \S (r) \right\rangle,
 \qquad t\in [s,T]
\]
exists. For every $t\in [s,T]$ the $\epsilon$-approximation of such an
 integral gives, up to a remainder boundary term $C(\epsilon,t)$ which 
converges in probability to zero,
\begin{multline}
\label{eq:era38}
\frac{1}{\epsilon}\int_s^t \left\langle \Gamma(r,\X(r)), \S(r+\epsilon) - 
\S(r) \right\rangle \ud r 
=  \frac{1}{\epsilon}\int_s^t \int_{r}^{r+\epsilon} \left\langle \Z(u), A^*\Gamma(r,\X(r)) \right\rangle \ud u \ud r \\
=  \frac{1}{\epsilon}\int_s^t \int_{u-\epsilon}^{u} \left\langle \Z(u), A^*\Gamma(r,\X(r)) \right\rangle \ud r \ud u \xrightarrow{\epsilon \to 0} \int_s^t \left\langle \Z(u), A^*\Gamma(u,\X(u)) \right\rangle \ud u,
\end{multline}
in probability by classical Lebesgue integration theory. The right-hand side of (\ref{eq:era38}) has obviously a continuous modification so
\eqref{ForwI} exists by definition and
condition (ii) is fulfilled. \\
In particular we have proved that 
\begin{eqnarray*}
\int_s^t \left\langle \Gamma(r,\X(r)), \ud^- \X (r) \right\rangle
&=& \int_s^t \left\langle \Gamma(r,\X(r)), \ud \M (r) \right\rangle
+ \int_s^t \left\langle \Gamma(r,\X(r)), \ud \V (r) \right\rangle  \\ &+&
\int_s^t \left\langle \Z(u), A^*\Gamma(u,\X(u)) \right\rangle \ud u 
\end{eqnarray*}
\end{Remark}

\begin{proof}[Proof of Theorem \ref{th:exlmIto}]
$\;$

\emph{Step 1.}

\smallskip

\noindent
Let $\{e_i^*\}_{i\in \mathbb{N}}$ be an orthonormal basis of $H^*$ made of elements of $D(A^*)\subseteq H^*$. 
This is always possible since $D(A^*)\subseteq H^*$ densely
embedded, via a Gram-Schmidt orthogonalization
procedure.\\
For $N\geq 1$ we denote by $P_N\colon H \to H$ the orthogonal projection on the span of the vectors $\{e_1, .. , e_N\}$. $P_\infty\colon H \to H$ will simply denote the identity.

\smallskip

\noindent
Let us for a moment omit the time  dependence on $F$,
which is supposed to be of class $C^2$ from $H$ to $\R$.
We define $F_N\colon H \to \mathbb{R}$ as $F_N(x):= F(P_N (x))$. We have
\begin{equation} \label{57bis}
\partial_xF_N(x) = P_N \partial_xF(P_N X(x))
\end{equation}
and 
\[
\partial_{xx}^2F_N(x) = (P_N\otimes P_N) \partial_{xx}^2 F (P_N X(x)),
\]
where the latter  equality has to be understood as
\begin{multline}
\label{eq:3}
\,_{(H\hat\otimes_{\pi} H)^*}\! \left\langle \partial_{xx}^2 F_N (x) , h_1\otimes h_2 \right\rangle_{(H\hat\otimes_{\pi} H)} = 
\,_{(H\hat\otimes_{\pi} H)^*}\!\left\langle \partial_{xx}^2 F (P_N(x)) , (P_N(h_1))\otimes (P_N(h_2)) \right\rangle_{(H\hat\otimes_{\pi} H)},
\end{multline}
for all $h_1, h_2 \in H$. $\partial_{xx}^2 F_N (x)$ is an element of $(H\hat\otimes_{\pi} H)^*$ but it belongs to $(D(A^*)\hat\otimes_{\pi} D(A^*))$ as well; indeed it can be written as
\[
\sum_{i,j=1}^N \,_{(H\hat\otimes_{\pi} H)^*}\!\left\langle \partial_{xx}^2F(P_N (x)), e_i\otimes e_j \right\rangle_{(H\hat\otimes_{\pi} H)} \, \left ( e_i^* \otimes  e_j^* \right )
\]
and $e_i^*\otimes e_j^*$ are in fact elements of 
$D(A^*)\hat\otimes_{\pi} D(A^*)$. 

\medskip
We come back now again to the time dependence notation $F(t,x)$.
We can apply the It\^o formula proved in \cite{DiGirolamiRusso11}, 
Theorem 5.2, and with the help of Assumption (i), we find
\begin{multline}
\label{eq:Ito-N}
F_N(t,\X(t)) = F_N(s, \X(s)) + \int_s^t \left\langle \partial_xF_N(r, \X(r)), \ud^- \X(r) \right\rangle \\
+\frac12 \int_s^t \left\langle \partial_{xx}^2 F_N(r, \X(r)) , \ud C(s) \right\rangle + \int_s^t \partial_r F_N(r, \X(r)) \ud r.
\end{multline}

\bigskip

\emph{Step 2.}
We consider, for fixed $\epsilon >0$, the map
\[
\left \{
\begin{array}{l}
T_\epsilon \colon UC([s,T]\times H; D(A^*)) \to L^0(\Omega)\\[5pt]
T_\epsilon \colon  \displaystyle G \mapsto \int_s^t \left\langle G(r, \X(r)), \frac{\X(r+\epsilon) - \X(r)}{\epsilon} \right\rangle\ud r,
\end{array}
\right .
\]
where the set    $L^0(\Omega)$ of all real random variables
 is equipped with the topology of the convergence in probability. 
Assumption (ii) implies that $\lim_{\epsilon\to 0} T_\epsilon G$ exists for every $G$.
By Banach-Steinhaus for $F$-spaces (see Theorem 17, Chapter II in \cite{DunfordSchwartz58}) it follows that the map
\[
\left \{
\begin{array}{l}
UC([s,T]\times H; D(A^*)) \to L^0(\Omega)\\[5pt]
\displaystyle
G \mapsto \int_s^t \left\langle G(r, \X(r)), \ud ^-\X(r) \right\rangle
\end{array}
\right .
\]
is linear and continuous.

\bigskip

\emph{Step 3.}

\smallskip

\noindent
If $K\subseteq H$ is a compact set then the set 
\[
P(K) := \left \{ P_N(y) \; : \; y\in K, \; N\in  \mathbb{N}\cup {+\infty} \right \}
\]
is compact as well. Indeed, consider $\left \{ P_{N_l} (y_l) \right \}_{l \ge 1}$
be  a sequence in $P(K)$. We look for a subsequence convergence to an
element of $ P(K)$.

 Since $K$ is compact we can assume,  without loss of generality, that $y_l$ converges, for ${l\to + \infty}$, to some $y\in K$. If $\{ N_l \}$ assumes only a finite number of values then (passing if necessary to a subsequence) $N_l \equiv \bar N$ for some $\bar N \in \mathbb{N}\cup {+\infty}$ and then $P_{N_l} (y_l) \xrightarrow{l\to+\infty} P_{\bar N} (y)$. Otherwise we can assume (passing if necessary to a subsequence) that $N_l \xrightarrow{l\to+\infty} +\infty$ and then 
it is not difficult to prove that $P_{N_l} (y_l) \xrightarrow{l\to+\infty} y$, which belongs to $P(K)$ since
$y = P_\infty y$.

In particular, being $\partial_xF$ continuous,
\[
\mathcal{D}:= \left \{ \partial_xF (P_N (x)) \; : \; x\in K, \; 
N\in  \mathbb{N}\cup \{+\infty\} \right \}
\]
is compact in $D(A^*)$. Since the sequence of maps $\{ P_N \}$ is
 uniformly continuous, it follows that
\begin{equation}
\label{eq:unif-conv-1}
\sup_{x \in \mathcal{D}} \vert (P_N - I) (x) \vert  \xrightarrow{N\to\infty} 0. 
\end{equation}

\bigskip

\emph{Step 4.}

\smallskip

We show now that 
\begin{equation}
\label{eq:first-term}
\lim_{N\to\infty} \int_s^t \left\langle \partial_xF_N(r, \X(r)), \ud^- \X(r) \right\rangle = \int_s^t \left\langle \partial_xF(r, \X(r)), \ud^- \X(r) \right\rangle.
\end{equation}
holds in probability for every $t\in [s,T]$.
Let $K$ be a compact subset of $H$. In fact 
\[
\sup_{t \in [0,T], x\in K} |\partial_xF(t, P_N (x)) - \partial_xF(t,x)| 
 \xrightarrow{N\to\infty}  0,
\]
since $\partial_xF$ is continuous. On the other hand
\[
\sup_{t \in [0,T], x\in K} |(P_N - I)(\partial_xF(t, P_N x))|  
\xrightarrow{N\to\infty}  0,
\]
because of (\ref{eq:unif-conv-1}). Consequently, by \eqref{57bis}, 
\begin{equation} \label{eq:second}
\partial_x F_N  \to \partial_x F, 
\end{equation}
uniformly on each compact, with values in $H$.
This yields that 
$\omega$-a.s. 
\[
\partial_x F_N (r, \X (r)) \to \partial_x F (r, \X (r)),
\]
uniformly on each compact. By Step 2, then (\ref{eq:first-term}) follows.

%

\bigskip

\emph{Step 5.}

\smallskip

\noindent
Finally, we prove that
\begin{equation}
\label{eq:second-term}
\lim_{N\to\infty} \frac12 \int_s^t \left\langle \partial_{xx}^2 F_N(r, \X(r)) , \ud C(s) \right\rangle = \frac12 \int_s^t \left\langle \partial_{xx}^2 F(r, \X(r)) , \ud C(s) \right\rangle.
\end{equation}

For a fixed $\omega \in\Omega$ we define $K(\omega)$ the compact set as
\[
K(\omega):= \left \{ \X(t)(\omega) \; : \; t\in[s,T] \right \}.
\]
We write
\begin{multline}
\left | \int_s^t \left\langle \partial_{xx}^2F_N(r,\X(x)) - \partial_{xx}^2 F (r,\X(x)) , \ud C(r) \right\rangle \right | (\omega)\\
\leq
\sup_{\substack { y\in K(w)\\ t\in [s,T]}} \left \| \partial_{xx}^2F_N(t,y) -
 \partial_{xx}^2F(t,y)  \right \|_{(H\hat\otimes_{\pi} H)^*} \int_s^T \ud |C(r)| (\omega).
\end{multline}

Using arguments similar to those used in proving (\ref{eq:second})
 one can see that 
\[
\partial^2_{xx} F_N \xrightarrow{N\to\infty} \partial^2_{xx} F
\]
uniformly on each compact. Consequently 
\[
\sup_{r\in [s,T]} | (\partial^2_{xx} F_N -
 \partial^2_{xx} F) (r, \X(r)) |_{(H \hat\otimes_\pi H)^*} \xrightarrow{N\to\infty} 0.
\]
Since $C$ has bounded variation, finally (\ref{eq:second-term}) holds.


\bigskip

\emph{Step 6.}

\smallskip

Since $F_N$ [resp. $\partial_r F_N$] converges uniformly on each compact to 
$F$ [resp. $\partial_r F$], when $N\to\infty$, then 
$$ \int_s^t \partial_r F_N(r, \X(r)) \ud r   \xrightarrow{N\to\infty}  
\int_s^t \partial_r F (r, \X(r)) \ud r.$$
Taking the limit when $N\to\infty$ in
 (\ref{eq:Ito-N}), finally provides (\ref{eq:Ito}).
\end{proof}
Next result can be considered a It\^o formula for {\it mild
type processes}, essentially coming out from 
mild solutions of infinite dimensional stochastic differential equations. An interesting  contribution
in this direction, but in a different spirit appears in \cite{RoecknerMild}.
\begin{Corollary}
\label{lm:Ito-pre}
Assume that $b$ is a 
predictable process with values in $H$ and $\sigma$ 
is
a predictable process  with values in $\mathcal{L}_2(U_0, H)$ satisfying
 (\ref{eq:hp-per-existence-mild}). Define $\X$ as in (\ref{eq:SPDE-mild}). Let $x$ be an element of $H$. Assume that $f\in C^{1,2}([0,T] \times H)$
 with $\partial_xf \in UC([0,T] \times H, D(A^*))$. Then,  $\mathbb{P}-a.s.$,
\begin{multline}
\label{eq:first-Dinkin-pre}
f(t,\X(t)) =  f(s,x)  + \int_s^t \partial_sf(r,\X(r)) \ud r  +  \int_s^t \left\langle A^* \partial_x f(r,\X(r)), \X(r) \right\rangle \ud r
 +  \int_s^t \left\langle \partial_x f(r,\X(r)), b(r) \right\rangle \ud r\\
+ \frac{1}{2}  \int_s^t \text{Tr} \left [\left ( \sigma (r) {Q}^{1/2} \right )
\left ( \sigma (r) Q^{1/2}  \right)^* \partial_{xx}^2 f(r,\X(r)) \right ] \ud r
+ \int_s^t \left\langle \partial_x f(r,\X(r)), \sigma(r) \ud \W_Q(r) \right\rangle.
\end{multline}
\end{Corollary}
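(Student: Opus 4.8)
The plan is to apply the abstract Itô formula of Theorem~\ref{th:exlmIto} to the process $\X$ defined in \eqref{eq:SPDE-mild} and the function $F = f$, and then identify the abstract terms with the concrete expressions appearing in \eqref{eq:first-Dinkin-pre}. First I would verify the hypotheses of Theorem~\ref{th:exlmIto}. Writing $\X = \M + \V + \S$ with $\M(t) = x + \int_s^t \sigma(r)\,\ud\W_Q(r)$, $\V(t) = \int_s^t b(r)\,\ud r$, and $\S = \Y$ the process defined in \eqref{eq:defY}, we are precisely in the situation described in Remark~\ref{rm:per-lemmaIto}: indeed, by Lemma~\ref{lm:conOndrejat}, $\langle \Y(t), h\rangle = \int_s^t \langle \X(r), A^* h\rangle\,\ud r$ for $h\in D(A^*)$, so $\S = \Y$ plays the role of the process with $\Z = \X$ and $\int_s^T |\X(r)|^2\,\ud r < +\infty$ a.s.\ by continuity of $\X$. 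Thus, by Remark~\ref{rm:per-lemmaIto}, $\X$ has finite $\bar\chi$-quadratic variation, conditions (i) and (ii) of Theorem~\ref{th:exlmIto} hold, and one may take $C = [\M,\M]^{cl}$.

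Next I would plug in and identify the four terms of \eqref{eq:Ito}. The time-derivative term $\int_s^t \partial_r F(r,\X(r))\,\ud r$ is already in final form. For the forward-integral term, I would invoke the explicit computation at the end of Remark~\ref{rm:per-lemmaIto}: with $\Gamma(r,x) = \partial_x f(r,x)$, which is continuous from $[s,T]\times H$ to $D(A^*)$ by hypothesis,
\[
\int_s^t \langle \partial_x f(r,\X(r)), \ud^- \X(r)\rangle
= \int_s^t \langle \partial_x f(r,\X(r)), \sigma(r)\,\ud\W_Q(r)\rangle
+ \int_s^t \langle \partial_x f(r,\X(r)), b(r)\rangle\,\ud r
+ \int_s^t \langle A^*\partial_x f(r,\X(r)), \X(r)\rangle\,\ud r,
\]
using that the integral against the semimartingale $\M + \V$ coincides with the Itô integral (Theorem~\ref{th:integral-forrward=ito-martingale} and Proposition~\ref{ItoBV}), and that the integral against $\S$ equals $\int_s^t \langle \X(u), A^*\partial_x f(u,\X(u))\rangle\,\ud u$. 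This produces three of the five ``substantial'' terms of \eqref{eq:first-Dinkin-pre}.

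Finally, for the second-order term I would use Lemma~\ref{lm:lemmaDPZ-covariation}: since $\M(t) = x + \int_s^t \sigma(r)\,\ud\W_Q(r)$, its classical tensor covariation is $[\M,\M]^{cl}(t) = \int_s^t g(r)\,\ud r$, where $g(r)\in H\hat\otimes_\pi H$ corresponds to the nuclear operator $G_g(r) = (\sigma(r)Q^{1/2})(\sigma(r)Q^{1/2})^*$. Hence $C$ is absolutely continuous with density $g$, and by Proposition~\ref{PPTTT} (pairing of $H\hat\otimes_\pi H$ with its dual as a trace),
\[
\tfrac12 \int_s^t {}_{(H\hat\otimes_\pi H)^*}\langle \partial^2_{xx} f(r,\X(r)), \ud C(r)\rangle_{H\hat\otimes_\pi H}
= \tfrac12 \int_s^t \mathrm{Tr}\!\left[ \left(\sigma(r)Q^{1/2}\right)\left(\sigma(r)Q^{1/2}\right)^* \partial^2_{xx} f(r,\X(r))\right] \ud r,
\]
which is exactly the trace term in \eqref{eq:first-Dinkin-pre}. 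Collecting the four pieces, together with $F(s,\X(s)) = f(s,x)$, yields \eqref{eq:first-Dinkin-pre}.

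The main obstacle I anticipate is the careful bookkeeping in identifying the abstract forward integral $\int_s^t \langle \partial_x f(r,\X(r)), \ud^- \X(r)\rangle$ with its three concrete constituents — in particular checking that the decomposition $\X = \M + \V + \S$ genuinely satisfies the structural assumptions of Remark~\ref{rm:per-lemmaIto} (so that the splitting of the forward integral is legitimate) and that the integrability condition \eqref{eq:hp-per-existence-mild} indeed guarantees $\S$ (i.e.\ $\Y$) is well-defined with $\int_s^T |\X(r)|^2\,\ud r < \infty$ a.s. The trace-term identification and the time-derivative term are then routine.
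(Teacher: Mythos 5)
Your proposal is correct and follows essentially the same route as the paper's own proof: decompose $\X=\M+\V+\S$ with $\S=\Y$ and $\Z=\X$, invoke Remark \ref{rm:per-lemmaIto} (via Lemma \ref{lm:conOndrejat}) to verify the hypotheses of Theorem \ref{th:exlmIto} and to split the forward integral, and identify the trace term through Lemma \ref{lm:lemmaDPZ-covariation} and Proposition \ref{PPTTT}. In fact your write-up spells out the bookkeeping somewhat more explicitly than the paper's rather terse proof.
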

\begin{Remark} \label{Rott}
We remark that in \eqref{eq:first-Dinkin-pre},
the partial derivative $\partial^2_{xx} f(r, x)$
for any $r \in [s,T]$ and $x \in H$ stands
in fact for its associated linear bounded operator
in the sense of \eqref{eq:expressionLB}.
From now on we will  make this 
natural identification.
\end{Remark}

\begin{proof}

It is a consequence of Theorem \ref{th:exlmIto} taking into account
 Remark \ref{rm:per-lemmaIto}:
we have  $\M(t) = x + \int_s^t \sigma (r) \ud \W_Q(r), t \in [0,T]$,
$\V(t)  = \int_s^t b(r) dr$, $\S = \Y$ with  
$\Z(r) = \X(r)$. 
According to that Remark, in   Theorem  \ref{th:exlmIto}
 we set $C = [\M,\M]^{cl}$.
We also use the chain rule
for It\^o's integrals in Hilbert spaces, see the considerations
before  Proposition
\ref{PChainRule},  together with Lemma \ref{lm:lemmaDPZ-covariation}.
The fourth integral in the right-hand side of 
\eqref{eq:first-Dinkin-pre} appears from the second integral
in \eqref{eq:Ito} using Proposition \ref{PPTTT}
and again  Lemma \ref{lm:lemmaDPZ-covariation}.
\end{proof}

\section{The optimal control problem}
\label{sec:optimal-control}

In this section we illustrate the utility of the tools of stochastic
 calculus via regularization in the study of optimal control problems
 driven  by stochastic PDEs or more in general by infinite dimensional stochastic differential equations. We will prove a decomposition result for the strong 
solutions of the Hamilton-Jacobi-Bellman equation related to the optimal
 control problem and we use that decomposition to derive a verification theorem.

$U$, $U_0$, $\W_Q=\{\W_Q(t):t\leq s\leq T\}$, $\mathcal{L}_2(U_0, H)$
 and $A\colon D(A) \subseteq H \to H$ were
 defined as in Section \ref{sec:SPDEs}.

\subsection{The setting of the problem}

 We consider $\Lambda$ a Polish space (i.e. a complete and separable metric space) that will be our control space. In other words we will try to minimize our functional on a class of $\Lambda$-valued processes. 
We formulate the following standard assumptions that will ensure existence and uniqueness for the solution of the state equation. 
\begin{Hypothesis}
\label{hp:onbandsigma}
$b\colon [0,T] \times  H \times \Lambda \to H$ is a continuous function and satisfies, for some $C>0$,
\[
\begin{array}{l}
|b(s,x,a) - b(s,y,a)| \leq C |x-y|, \\[3pt]
|b(s,x,a)| \leq C (1+|x|),
\end{array}
\]
for all $x,y \in H$, $s\in [0,T]$, $a\in\Lambda$. $\sigma\colon [0,T]\times H \to \mathcal{L}_2(U_0, H)$ is continuous and, for some $C>0$,
\[
\begin{array}{l}
\|\sigma(s,x) - \sigma(s,y)\|_{\mathcal{L}_2(U_0, H)} \leq C |x-y|, \\[3pt]
\|\sigma(s,x)\|_{\mathcal{L}_2(U_0, H)} \leq C (1+|x|),
\end{array}
\]
for all $x,y \in H$, $s\in [0,T]$.
\end{Hypothesis}

\medskip
Let us fix for the moment a predictable process $a = a(\cdot): [s,T] \times \Omega 
\rightarrow \Lambda$,
 where the dot refers to the 
time variable. $a$ will indicate in the sequel an admissible control
in a sense to be specified.

We consider the state equation
\begin{equation}
\label{eq:state}
\left \{
\begin{array}{l}
\ud \X(t) = \left ( A\X(t)+ b(t,\X(t),a(t)) \right ) \ud t + \sigma(t,\X(t)) \ud \W_Q(t)\\[5pt]
\X(s)=x.
\end{array}
\right.
\end{equation}


The solution of (\ref{eq:state}) is understood in the mild sense,
 so an $H$-valued 
adapted  strongly measurable process $\X(\cdot)$ is a solution if
\[
\mathbb{P} \left \{ \int_s^T \left (|\X(r)| + | b(r,\X(r), a(r))| + \| \sigma(r,\X(r))\|_{\mathcal{L}_2(U_0, H)}^2\right) \ud r <+\infty  \right \} = 1
\]
and
\begin{equation}
\label{eq:state-mild}
\X(t) =  e^{(t-s)A}x + \int_s^t e^{(t-r)A} b(r,\X(r),a(r)) \ud r
 + \int_s^t e^{(t-r)A} \sigma(r,\X(r)) \ud \W_Q(r)
\end{equation}
 $\mathbb{P}$-a.s. for every  $ t \in [s,T]$.

 Thanks to Theorem 3.3 of \cite{GawareckiMandrekar10}, 
 given Hypothesis \ref{hp:onbandsigma}, 
there exists a unique solution
 $\X(\cdot; s,x, a(\cdot))$ of (\ref{eq:state}), which
 admits a continuous modification. So for us $\X$
can always  be considered as a continuous  process.

\begin{Remark} 
\label{rm:whitenoise}

\begin{itemize}
\item[(i)] When the operator $Q$ is trace class, the norm $\mathcal{L}_2(U_0, H)$ that appears in Hypothesis \ref{hp:onbandsigma} (assumptions on $\sigma$) can be controlled using the norm $\mathcal{L}(U, H)$ so the Lipschitz and the growing conditions imposed on $\sigma$ follows from similar hypotheses involving the norm of $\mathcal{L}(U, H)$. 
 
\item[(ii)] When the operator $Q$ is the identity, $\W_Q$ is naturally associated to a space time white noise, see also \cite{Walsh86} and \cite{quer}. In this case the norm $\mathcal{L}(U, H)$ does not control anymore the norm $\mathcal{L}_2(U_0, H)$ but still something can be done, in certain cases, to weaken the Lipschitz and the growing conditions imposed on $\sigma$,  asking something more on the semigroup. For example (see  \cite{GawareckiMandrekar10} Section 3.10), when $\sigma$ only depends on $\X$, if we only ask for linear growth and Lipschitz continuity w.r.t. the norm of $\mathcal{L}(U, H)$, we have existence, uniqueness and regularity results for the stochastic evolution equation similar to the ones we use in this work if the following condition is satisfied:
 $\| S(t) \sigma (x) \|_{\mathcal{L}_2(U, H)} \leq K(t) C(1+|x|)$ and $\| S(t) \sigma (x) - S(t) \sigma (y)  \|_{\mathcal{L}_2(U, H)} \leq K(t) C(|x-y|)$ where $K$ is a real function s.t. $\int_0^1 K^2(r) r^{-2\alpha} \ud r <+\infty$ for some $\alpha \in (0,1/2)$.
 
 A simple case in which such a condition is verified (see \cite{DaPratoZabczyk96} Section 11.2) is given by a one-dimensional heat equation on the segment $(0,1)$ of the following form:
 \[
 dX (t, \xi) = \left [  \frac{\partial^2 X}{\partial \xi^2}(t,\xi) + f(\xi, X(t,\xi))  \right ] dt + b (\xi, X(t,\xi)) dW(t,\xi),
 \]
 with zero Dirichlet boundary conditions and some square integrable initial datum where $W$ is cylindrical Wiener process on $U=L^2(0,1)$. 

\item[(iii)] In the framework described by item (i), 
 all our results do not extend automatically. Indeed when $Q$ is the identity and $\sigma$ has linear growth w.r.t. the norm of $\mathcal{L}(U, H)$,
 the stochastic integral appearing in the definition $\Y$ given in (\ref{eq:defY}) is not defined in $H$ but only in a larger space. This means in particular that the decomposition of $\X$ described in Theorem \ref{th:exlmIto} is nomore a decomposition in $H$-valued addenda. To include the ``purely cylindrical'' case we  need to describe the evolution of the solution of the stochastic equation in a larger Hilbert space and introduce there consistent notions. This will
the object of a paper  in preparation.
\end{itemize}
\end{Remark}

Setting
$b := b(\cdot, \X(\cdot; s,x, a(\cdot)), a(\cdot))$, 
$\sigma := \sigma(\cdot, \X(\cdot; s,x, a(\cdot)))$
then $\X$ fulfills
 (\ref{eq:hp-per-existence-mild})
and it is of type \eqref{eq:SPDE-mild}.
The following corollary  is just a particular case of Corollary 
\ref{lm:Ito-pre}, which is reformulated  here for the reader convenience.
\begin{Corollary}
\label{lm:Ito}
Let $\Lambda$ be a Polish space and
assume that  $b$ and $\sigma$  satisfy the Hypothesis \ref{hp:onbandsigma}. 
Let
$a:[s,T] \times \Omega \rightarrow \Lambda$ be predictable 
and $x\in H$.
Let $\X(\cdot)$ denote $\X(\cdot; s,x, a(\cdot))$.
Assume that $f\in C^{1,2}([0,T] \times H)$ \\
with $\partial_xf \in 
UC([0,T] \times H,D(A^*))$.
Then,  $\mathbb{P}-a.s.$,
\begin{multline}
\label{eq:first-Dinkin}
f(t,\X(t)) =  f(s,x)  + \int_s^t \partial_r 
f(r,\X(r)) \ud r  +  \int_s^t \left\langle A^* \partial_x f(r,\X(r)), 
\X(r) \right\rangle \ud r
 +  \int_s^t \left\langle \partial_x f(r,\X(r)), b(r, \X(r), a(r)) \right\rangle \ud r\\
+ \frac{1}{2}  \int_s^t \text{Tr} \left [\left ( \sigma (r, \X(r)) {Q}^{1/2} \right )
\left ( \sigma (r, \X(r)) Q^{1/2}  \right)^* \partial_{xx}^2 f(r,\X(r)) \right ] \ud r
+ \int_s^t \left\langle \partial_x f(r,\X(r)), \sigma(r, \X(r)) \ud \W_Q(r) \right\rangle.
\end{multline}
\end{Corollary}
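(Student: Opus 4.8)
The plan is to obtain the statement as an immediate specialization of Corollary \ref{lm:Ito-pre}, so the proof is essentially a bookkeeping argument identifying the coefficients. As recalled just before the statement, under Hypothesis \ref{hp:onbandsigma} the controlled state equation \eqref{eq:state} admits a unique mild solution $\X(\cdot) = \X(\cdot; s, x, a(\cdot))$, which we take in its continuous adapted modification; setting $b(r) := b(r, \X(r), a(r))$ and $\sigma(r) := \sigma(r, \X(r))$ turns $\X$ into a process of type \eqref{eq:SPDE-mild} whose coefficients are predictable and satisfy \eqref{eq:hp-per-existence-mild}. I would therefore first make these identifications explicit.

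For completeness I would spell out why \eqref{eq:hp-per-existence-mild} holds here: the maps $r \mapsto b(r, \X(r), a(r))$ and $r \mapsto \sigma(r, \X(r))$ are predictable because $\X$ is continuous and adapted, $a$ is predictable, and $b, \sigma$ are continuous in all their variables; moreover, since $\X$ has continuous paths one has $\sup_{r\in[s,T]} |\X(r)| < +\infty$ a.s., so the linear growth bounds of Hypothesis \ref{hp:onbandsigma} give
\[
\int_s^T \big( |b(r, \X(r), a(r))| + \|\sigma(r, \X(r))\|^2_{\mathcal{L}_2(U_0, H)} \big) \ud r \;\leq\; \tilde{C}\,(T-s)\Big( 1 + \sup_{r\in[s,T]} |\X(r)|^2\Big) < +\infty \quad \text{a.s.}
\]
for a suitable constant $\tilde{C}$. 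By uniqueness of the mild solution, $\X$ coincides with the process defined in \eqref{eq:SPDE-mild} for this data.

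Finally, since $f \in C^{1,2}(H)$ with $\partial_x f \in C(H, D(A^*))$, all the hypotheses of Corollary \ref{lm:Ito-pre} are satisfied, and applying it directly yields \eqref{eq:first-Dinkin} once the terms containing $b(r)$ and $\sigma(r)$ are rewritten as $b(r, \X(r), a(r))$ and $\sigma(r, \X(r))$. I do not expect any genuine obstacle: the analytic content lies entirely in Corollary \ref{lm:Ito-pre} (hence ultimately in Theorem \ref{th:exlmIto}), and the only thing to check is the elementary predictability and pathwise $\ud r$-integrability of the composed coefficients, both of which follow from the path continuity of $\X$ together with the growth conditions in Hypothesis \ref{hp:onbandsigma}.
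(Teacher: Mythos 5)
Your proposal is correct and follows exactly the paper's own route: the paper states that Corollary \ref{lm:Ito} is ``just a particular case of Corollary \ref{lm:Ito-pre}'', obtained by setting $b := b(\cdot, \X(\cdot), a(\cdot))$ and $\sigma := \sigma(\cdot, \X(\cdot))$ and noting that these satisfy \eqref{eq:hp-per-existence-mild} so that $\X$ is of type \eqref{eq:SPDE-mild}. Your additional verification of predictability and of the integrability condition via path continuity and the linear growth bounds is a harmless elaboration of what the paper leaves implicit.
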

Let $l\colon [0,T] \times H \times \Lambda \to \mathbb{R}$ 
be a measurable function and $g\colon H\to\mathbb{R}$ a continuous
 function. $l$ is called the running cost and $g$ the terminal cost. \\
We introduce now the class $ \mathcal{U}_s$ of admissible controls.
It is constituted by $a:[s,T] \times \Omega \rightarrow \Lambda$
such that  for $\omega$ a.s.
$(r, \omega) \mapsto l(r, \X(r,s,x, a(\cdot)), a(r)) + 
g(\X(T, s, x, a(\cdot))) $ is $\ud r \otimes \ud \P$- is 
quasi-integrable. This means that, either 
its positive or negative part are integrable. \\
We 
want to determine a {\em minimum} over all $a(\cdot) \in \mathcal{U}_s$, of the cost functional
\begin{equation}
\label{eq:functional}
J(s,x;a(\cdot))=\mathbb{E}\bigg[ \int_{s}^{T} l(r, \X(r;s,x,a(\cdot)), a(r)) \ud r + g (\X(T;s,x,a(\cdot))) \bigg].
\end{equation}

The value function of this problem is defined as
\begin{equation}
\label{eq:def-valuefunction}
V(s,x) = \inf_{a(\cdot)\in \mathcal{U}_s} J(s,x;a(\cdot)).
\end{equation}

\begin{Definition}
If $a^*(\cdot)\in \mathcal{U}_s$ minimizes (\ref{eq:functional}) among the controls in $\mathcal{U}_s$, i.e. if $J(s,x;a^*(\cdot)) = V(s,x)$, we say that the control $a^*(\cdot)$ is \emph{optimal} at $(s,x)$.
In this case the pair $(a^*(\cdot), \X^*(\cdot))$, where $\X^*(\cdot):= \X(\cdot; s,x, a^*(\cdot))$, is called an \emph{optimal couple} (or optimal pair) at $(s,x)$.
\end{Definition}

\subsection{The HJB equation}

The HJB equation associated with the minimization problem above is
\begin{equation}
\label{eq:HJB}
\left\{
\begin{array}{l}
\partial_s v + \left \langle  A^* \partial_x v, x \right\rangle + \frac{1}{2} Tr \left [ \, \sigma(s,x) \sigma^*(s,x) \partial_{xx}^2 v \right ] \\[3pt]
\qquad\qquad  + \inf_{a\in \Lambda }\Big\{  \left \langle \partial_x v, b(s,x,a) \right\rangle + l(s,x,a) \Big\}=0,\\ [8pt]
v(T,x)=g(x).
\end{array}
\right.
\end{equation}
In the above equation $\partial_xv$ [resp. $\partial^2_{xx} v$] is the 
[resp. second] Fr\'echet 
derivatives of $v$ w.r.t. the $x$ variable; it is identified with elements of $H$
 [resp. with a symmetric bounded operator on $H$].
 $\partial_s v$ is the derivative w.r.t. the time variable.
For $(t,x,p,a)\in [0,T]\times H \times H\times \Lambda$, the term 
\begin{equation}
\label{eq:def-CV-Hamiltonian}
F_{CV}(t,x,p,a):=  \left \langle p, b(t,x,a) \right\rangle + l(t,x,a),
\end{equation}
is called the \emph{current value Hamiltonian} of the system and its infimum over $a\in \Lambda$ 
\begin{equation}
\label{eq:def-Hamiltonian}
F(t,x,p):= \inf_{a\in \Lambda }\left\{  \left \langle p, b(t,x,a) \right\rangle + l(t,x,a) \right\}
\end{equation}
is called the \emph{Hamiltonian}. Using this notation the HJB equation, (\ref{eq:HJB}) 
can be rewritten as
\begin{equation}
\label{eq:HJB-with-F}
\left\{
\begin{array}{l}
\partial_s v+  \left \langle A^* \partial_x v, x \right\rangle + \frac{1}{2} Tr \left [ \sigma(s,x) \sigma^*(s,x) \partial^2_xv \right ] + F(s,x,\partial_xv)=0,\\[6pt]
v(T,x)=g(x).
\end{array}
\right.
\end{equation}

\begin{Hypothesis}
\label{hp:Hamiltonian}
The value function is always finite and the Hamiltonian $F(t,x,p)$
 is well-defined and finite  for all $\left( t,x,p\right)
 \in \left[ 0,T\right] \times H \times H$. Moreover it is supposed
to be continuous.
\end{Hypothesis}

We introduce the operator $\mathscr{L}_0$ on $C([0,T]\times H)$ defined as
\begin{equation}
\label{eq:def-L0}
\left \{
\begin{array}{l}
D(\mathscr{L}_0):= \left \{ \varphi \in C^{1,2}([0,T]\times H) \; : \; \partial_x\varphi \in C([0,T]\times H ; D(A^*)) \right \} \\[8pt]
\mathscr{L}_0 (\varphi)(s,x) := \partial_s \varphi(s,x)\\[4pt]
\qquad\qquad\qquad\qquad+  \left \langle A^* \partial_x \varphi (s,x), x \right\rangle + \frac{1}{2} Tr \left [  \sigma(s,x) \sigma^*(s,x) \partial_{xx}^2 \varphi(s,x) \right ].
\end{array}
\right .
\end{equation}

The HJB equation (\ref{eq:HJB-with-F}) 
can be rewritten as 
\[
\left \{
\begin{array}{l}
\mathscr{L}_0 (v) (s,x) + F(s,x, \partial_xv(s,x)) =0\\[5pt]
v(T,x) = g(x).
\end{array}
\right .
\]

\subsection{Strict and strong solutions}

For some $h\in C([0,T]\times H)$ and $g\in C(H)$ we consider the following Cauchy problem
\begin{equation}
\label{eq:simil-HJB-con-h}
\left \{
\begin{array}{l}
\mathscr{L}_0 (v) (s,x) = h(s,x)\\[5pt] 
v(T,x) = g(x).
\end{array}
\right .
\end{equation}

\begin{Definition}
We say that $v \in C([0,T]\times H)$ is a \emph{strict solution} of (\ref{eq:simil-HJB-con-h}) if $v\in D(\mathscr{L}_0)$ and (\ref{eq:simil-HJB-con-h}) is satisfied.
\end{Definition}

\begin{Definition}
\label{def:strong-sol}
Given $h\in C([0,T]\times H)$ and $g\in C(H)$ we say that
  $v \in C^{0,1}([0,T]\times H)$ 
 with  $\partial_x v \in UC([0,T)\times H; D(A^*))$ 
 is a \emph{strong solution} of (\ref{eq:simil-HJB-con-h}) if there exist three sequences: $ \{v_n \} \subseteq D(\mathscr{L}_0)$, $\{h_n\} \subseteq C([0,T]\times H)$ and $ \{ g_n \} \subseteq C(H)$ fulfilling the following.
\begin{enumerate}
\item[(i)] For any $n\in\mathbb{N}$, $v_n$ is a strict solution of the problem
\begin{equation}
\label{eq:approximating}
\left \{
\begin{array}{l}
\mathscr{L}_0 (v_n) (s,x) = h_n(s,x)\\[5pt]
v_n(T,x) = g_n(x).
\end{array}
\right .
\end{equation}
\item[(ii)] The following convergences hold:
\[
\left \{
\begin{array}{ll}
v_n\to v & \text{in} \;\; C([0,T]\times H)\\
h_n\to h & \text{in} \;\; C([0,T]\times H)\\
g_n\to g & \text{in} \;\; C(H).
\end{array}
\right .
\]
\end{enumerate}



\end{Definition}

\subsection{Decomposition for solutions of the HJB equation}

\begin{Theorem}
\label{th:decompo-sol-HJB}
Consider $h\in C([0,T]\times H)$ and $g\in C(H)$. 
Assume that Hypothesis \ref{hp:onbandsigma} is satisfied.
 Suppose that 
 $v \in C^{0,1}([0,T] \times H)$ 
 with $\partial_x v \in UC([0,T)\times H; D(A^*))$  is 
a strong solution of (\ref{eq:simil-HJB-con-h}).
 Let $\X(\cdot):=\X(\cdot;t,x,a(\cdot))$ be the solution of (\ref{eq:state}) starting at time $s$ at some $x\in H$ and driven by some control $a(\cdot)\in \mathcal{U}_s$.
Assume that $b$ is of the form 
\begin{equation}\label{eq:b}
b(t,x,a) = b_g(t,x,a) + b_i(t,x,a),
\end{equation}
where $b_g$ and $b_i$ satisfy the following conditions.
\begin{itemize}
 \item[(i)] $\sigma(t,\X(t))^{-1} b_g(t,\X(t),a(t))$ is bounded (being $\sigma(t,\X(t))^{-1}$ the pseudo-inverse of $\sigma$);
 \item[(ii)] $b_i$ satisfies 
\begin{equation}
\label{eq:conv-ucp-b}
\lim_{n\to\infty} \int_s^\cdot \left \langle \partial_x v_n (r,\X(r)) - \partial_x v (r,\X(r)), b_i(r,\X(r), a(r)) \right\rangle \ud r =0 \qquad ucp,
\end{equation}
on $ [s,T_0]$ for each $s < T_0 <T$. 
\end{itemize}
Then
\begin{multline} \label{E86}
v(t, \X(t)) - v(s, \X(s)) =  v(t, \X(t)) - v(s, x) =
  \int_s^t h(r, \X(r)) \ud r \\
+ \int_s^t \left\langle \partial_x v(r, \X(r)), b(r,\X(r), a(r)) \right\rangle \ud r
+ \int_s^t \left\langle \partial_x v(r, \X(r)), \sigma (r,\X(r)) \ud \W_Q(r) \right\rangle.
\end{multline}
\end{Theorem}
\begin{Example} \label{EThSol-HJB}
Hypothesis $(i)$ and $(ii)$ of Theorem \ref{th:decompo-sol-HJB} are satisfied if the approximating sequence $v_n$ converges to $v$ in a stronger way. For example if $v$ is a strong solution of the HJB in the sense of Definition \ref{def:strong-sol} and, moreover, $\partial_x v_n$ converges to $\partial_xv$ in $C([0,T]\times H)$, then the convergence in point (ii) can be easily checked.
 The convergence of the spatial partial derivative
 is the  typical assumption     required in the standard strong solutions 
literature.
\end{Example}
\begin{Example} \label{EThSol-HJB1}
The assumptions of Theorem \ref{th:decompo-sol-HJB} are fulfilled
 if the following assumption is satisfied.
\[
\sigma(t,\X(t))^{-1} b(t,\X(t),a(t)) \text{ is bounded}
\]
for all choice of admissible controls $a(\cdot)$.
In this case we apply Theorem  \ref{th:decompo-sol-HJB}  
with $b_i = 0$  and $b=b_g$.
\end{Example}
\begin{proof}[Proof of Theorem \ref{th:decompo-sol-HJB}]
We fix $T_0$ in $(s,T)$.
We  denote  by  $v_n$ the sequence of
smooth solutions of the approximating problems prescribed 
by Definition \ref{def:strong-sol}, which converges to $v$.
Thanks to Corollary \ref{lm:Ito}, every $v_n$ verifies
for $t \in [s,T_0]$,
\begin{eqnarray}
\label{eq:v-Ito-pre}
v_n(t,\X(t)) &=&  v_n(s,x)  + \int_s^t \partial_r {v_n}(r,\X(r)) \ud r \nonumber\\ 
 &+&   \int_s^t \left\langle A^* \partial_x v_n(r,\X(r)), \X(r) \right\rangle \ud r
 +  \int_s^t \left\langle \partial_x v_n(r,\X(r)), b(r, \X(r), a(r)) 
\right\rangle \ud r  \nonumber\\
&& \\
&+& \frac{1}{2}  \int_s^t \text{Tr} \left [\left ( \sigma (r, \X(r)) {Q}^{1/2}
 \right )
\left ( \sigma (r, \X(r)) Q^{1/2}  \right)^* \partial_{xx}^2 
v_n(r,\X(r)) \right ] \ud r  \nonumber\\
&+& \int_s^t \left\langle \partial_x v_n(r,\X(r)), \sigma(r, \X(r)) \ud \W_Q(r)
 \right\rangle, \ t \in [s,T]. \qquad \mathbb{P}-{\rm a.s.} \nonumber
\end{eqnarray}
Using Girsanov's Theorem (see \cite{DaPratoZabczyk92} Theorem 10.14) we can observe that
\[
\beta_Q(t) := W_Q(t) + \int_s^t \sigma(r,\X(r))^{-1} b_g(r,\X(r), a(r)) \ud r,
\]
is a $Q$-Wiener process w.r.t. a probability $\mathbb{Q}$ equivalent to $\mathbb{P}$ on the whole interval $[s,T]$.
 We can rewrite (\ref{eq:v-Ito-pre}) as
\begin{multline}
\label{eq:v-Ito}
v_n(t,\X(t)) =  v_n(s,x)  + \int_s^t \partial_r {v_n}(r,\X(r)) \ud r\\  +  \int_s^t \left\langle A^* \partial_x v_n(r,\X(r)), \X(r) \right\rangle \ud r
 +  \int_s^t \left\langle \partial_x v_n(r,\X(r)), b_i(r, \X(r), a(r)) \right\rangle \ud r,\\
+ \frac{1}{2}  \int_s^t \text{Tr} \left [\left ( \sigma (r, \X(r)) {Q}^{1/2} \right )
\left ( \sigma (r, \X(r)) Q^{1/2}  \right)^* \partial_{xx}^2 v_n(r,\X(r)) \right ] \ud r\\
+ \int_s^t \left\langle \partial_x v_n(r,\X(r)), \sigma(r, \X(r)) \ud \beta_Q(r) \right\rangle. \qquad \mathbb{P}-a.s.
\end{multline}
Since $v_n$ is a strict solution of (\ref{eq:approximating}), the expression
above gives
\begin{multline}
v_n(t,\X(t)) =  v_n(s,x)  + \int_s^t {h_n}(r,\X(r)) \ud r\\ +  \int_s^t \left\langle \partial_x v_n(r,\X(r)), b_i(r, \X(r), a(r)) \right\rangle \ud r
+ \int_s^t \left\langle \partial_x v_n(r,\X(r)), \sigma(r, \X(r)) \ud \beta_Q(r)\right\rangle.
\end{multline}
Since we wish to take the limit for $n\to\infty$, we define
\begin{multline}
M_n(t) := v_n(t,\X(t)) -  v_n(s,x)  - \int_s^t {h_n}(r,\X(r)) \ud r -  \int_s^t \left\langle \partial_x v_n(r,\X(r)), b_i(r, \X(r), a(r)) \right\rangle \ud r.
\end{multline}
$\{ M_n \}_{n\in\mathbb{N}}$ is a sequence of real $\mathbb{Q}$-local
 martingales converging ucp, thanks to the definition of strong solution
 and Hypothesis (\ref{eq:conv-ucp-b}), to 
\begin{multline} \label{E72}
M(t) := v(t,\X(t)) -  v(s,x)  - \int_s^t {h}(r,\X(r)) \ud r -  \int_s^t \left\langle \partial_x v(r,\X(r)), b_i(r, \X(r), a(r)) \right\rangle \ud r.
\end{multline} 
Since the space of real continuous local martingales equipped with 
the ucp topology is closed (see e.g. Proposition 4.4 of \cite{GozziRusso06}) 
then 
$M$ is a continuous $\mathbb{Q}$-local martingale.

\medskip
We have now gathered all the ingredients to conclude the proof.
As in Section  \ref{sec:SPDEs}, we set $\bar \nu_0 = D(A^*)$, 
$\nu = \bar \nu_0 \hat\otimes_\pi \R, \bar \chi =
\bar \nu_0 \hat \otimes_\pi \bar \nu_0.$

Corollary \ref{cor:X-barchi-Dirichlet} ensures that $\X(\cdot)$ is 
a $\nu$-weak Dirichlet process admitting a 
$\bar \chi$-quadratic variation 
with decomposition $\M + \A$
 where $\M$  is 
 the local martingale  (with respect to $\P$) defined by
 $\M(t) = x + \int_s^t \sigma (r, \X(r)) \ud \W_Q(r)$
and $\A$ is a $\nu$-martingale-orthogonal process.
Now 
$$\X(t) = \tilde \M(t) 
 + \V(t) + \A(t),
 t \in [s,T_0],$$
where $\tilde \M(t) =  x + \int_s^t \sigma (r, \X(r)) \ud \beta_Q(r)$
and  $\V(t) = - \int_s^t b_g(r, \X(r),a(r)) dr$, $t \in [s,T_0]$
is a bounded variation process. So by Proposition 
\ref{PChainRule} (i), $\tilde \M$ is a $\Q$-local martingale
and
by Proposition \ref{P421} 1.,
$\V$ is   a $\Q-\nu$-martingale orthogonal process.
By Remark \ref{R421} $\V + \A$ is a $\Q-\nu$-martingale orthogonal process
and $\X$ is a $\nu$-weak Dirichlet process with 
local martingale part $\tilde M$, with respect to $\Q$.
Still under $\Q$,  
Theorem \ref{th:prop6} 
 ensures that the process $v(\cdot, \X(\cdot))$ is a real
 weak Dirichlet process whose local martingale part being equal to
\[
N(t) = \int_s^t \left\langle \partial_x v(r,\X(r)), \sigma(r, \X(r)) \ud \beta_Q(r)\right\rangle.
\]
On the other hand, with respect to $\mathbb Q$,
\eqref{E72} implies that
\begin{multline}
v(t,\X(t)) = \bigg [  v(s,x)  + \int_s^t {h}(r,\X(r)) \ud r
+  \int_s^t \left\langle \partial_x v(r,\X(r)), b_i(r, \X(r), a(r)) \right\rangle \ud r \bigg ] + N(t),
\end{multline}
is a decomposition of $v(\cdot,\X(\cdot))$ as $\mathbb Q$-
semimartingale, which is also in particular, a  $\mathbb Q$-weak
 Dirichlet process. By Proposition \ref{rm:ex418}
 such a decomposition is unique and so
 \begin{multline}
M(t) = N(t) = \int_s^t \left\langle \partial_x v(r,\X(r)), \sigma(r, \X(r)) \ud \beta_Q(r)\right\rangle\\ 
= \int_s^t \left\langle \partial_x v(r,\X(r)), b_g(r, \X(r),a(r)) \ud r\right\rangle
+ \int_s^t \left\langle \partial_x v(r,\X(r)), \sigma(r, \X(r))
 \ud \W_Q(r)\right\rangle.
\end{multline}
This shows \eqref{E86} for $t \in [s,T_0]$.
Letting $T_0$ go to $T$ allows to 
 conclude the proof of Theorem \ref{th:decompo-sol-HJB}.
\end{proof}


\subsection{Verification Theorem}
\begin{Theorem}
\label{th:verification}
Assume that Hypotheses \ref{hp:onbandsigma} and \ref{hp:Hamiltonian} 
are satisfied.
Let  $v \in C^{0,1}([0,T]\times H)$ 
with $\partial_x v \in UC([0,T]\times H; D(A^*))$  be a
 strong solution of (\ref{eq:HJB}). Assume that for all initial data
 $(s,x)\in [0,T]\times H$ and every control $a(\cdot)\in \mathcal{U}_s$ $b$
 can be written as $b(t,x,a) = b_g(t,x,a) + b_i(t,x,a)$ with $b_i$ and $b_g$
 satisfying hypotheses (i) and (ii) of Theorem \ref{th:decompo-sol-HJB}.
 Let $v$ such that
 $\partial_xv$
 has most polynomial growth in the $x$ variable. Then
we have the following.
\begin{enumerate}
\item[(i)]  $v\leq V$ on $[0,T]\times H$.
\item[(ii)]  Suppose that, for some $s\in [0,T)$, there exists a 
predictable process $a(\cdot) = a^*(\cdot)  \in \mathcal{U}_s$ 
such that, denoting $\X\left( \cdot;s,x,a^*(\cdot)\right)$ simply
 by $\X^*(\cdot)$,
we have
\begin{equation}
\label{eq:condsuff}
F\left( t, \X^*\left( t\right) ,\partial_xv\left( t,\X^*\left( t\right)
\right) \right) =F_{CV}\left( t,\X^*\left( t\right) ,\partial_xv\left(
t,\X^*\left( t\right) \right) ;a^*\left( t\right) \right),
\end{equation}
  $dt \otimes \ud \P$ a.e. 
Then  $a^*(\cdot)$
 is optimal at $\left( s,x\right) $; moreover 
$v\left( s,x\right) =V\left(s,x\right)$.
\end{enumerate}
\end{Theorem}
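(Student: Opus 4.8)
The plan is to prove the two items by combining the decomposition result of Theorem \ref{th:decompo-sol-HJB} with the definition of the Hamiltonian as an infimum. First I would fix an initial datum $(s,x)$ and an arbitrary admissible control $a(\cdot)\in\mathcal{U}_s$, and apply Theorem \ref{th:decompo-sol-HJB} to $v$ (a strong solution of (\ref{eq:HJB}), hence of (\ref{eq:simil-HJB-con-h}) with $h(r,y)=-F(r,y,\partial_xv(r,y))$) along the state trajectory $\X(\cdot)=\X(\cdot;s,x,a(\cdot))$. This gives, for $t=T$,
\begin{multline*}
g(\X(T)) - v(s,x) = -\int_s^T F(r,\X(r),\partial_xv(r,\X(r)))\,\ud r \\
+ \int_s^T \langle \partial_xv(r,\X(r)), b(r,\X(r),a(r))\rangle\,\ud r
+ \int_s^T \langle \partial_xv(r,\X(r)), \sigma(r,\X(r))\,\ud\W_Q(r)\rangle,
\end{multline*}
using $v(T,\cdot)=g$. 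Rearranging and adding $\int_s^T l(r,\X(r),a(r))\,\ud r$ to both sides, the deterministic Lebesgue integrals combine into $\int_s^T\big[F_{CV}(r,\X(r),\partial_xv(r,\X(r));a(r)) - F(r,\X(r),\partial_xv(r,\X(r)))\big]\ud r$, which is $\ge 0$ pointwise by the very definition (\ref{eq:def-Hamiltonian}) of $F$ as an infimum over $\Lambda$.

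For item (i): taking expectation in the identity above, the stochastic integral term is a local martingale; here I would invoke the polynomial growth of $\partial_xv$ together with the standard moment estimates for mild solutions (Hypothesis \ref{hp:onbandsigma} guarantees $\sup_{t}\mathbb{E}|\X(t)|^p<\infty$ for all $p$), so that $\langle \partial_xv(\cdot,\X(\cdot)),\sigma(\cdot,\X(\cdot))\rangle$ is genuinely in the space of integrands producing a true martingale, hence its expectation vanishes. Actually, to avoid imposing integrability on the cost, the cleaner route — consistent with the paper's stated philosophy of "applying the expectation only at the last moment" — is first to write the pathwise identity
\[
\int_s^T l(r,\X(r),a(r))\,\ud r + g(\X(T)) = v(s,x) + \int_s^T(F_{CV}-F)\,\ud r + N(T),
\]
with $N$ the stochastic integral, then take expectations using that $N$ is a martingale (or a supermartingale after localization, with Fatou) to get $J(s,x;a(\cdot)) \ge v(s,x) + \mathbb{E}\int_s^T(F_{CV}-F)\,\ud r \ge v(s,x)$; taking the infimum over $a(\cdot)\in\mathcal{U}_s$ yields $v(s,x)\le V(s,x)$.

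For item (ii): under the extra assumption (\ref{eq:condsuff}), the integrand $F_{CV}-F$ vanishes $dt\otimes\ud\P$-a.e. along $\X^*$, so the same pathwise identity collapses to $\int_s^T l(r,\X^*(r),a^*(r))\,\ud r + g(\X^*(T)) = v(s,x) + N^*(T)$ with $N^*$ a local martingale; taking expectation (again justified via the martingale property, using polynomial growth of $\partial_xv$ and the a priori moment bounds on $\X^*$) gives $J(s,x;a^*(\cdot)) = v(s,x)$. Combined with item (i), $V(s,x)\le J(s,x;a^*(\cdot)) = v(s,x)\le V(s,x)$, so $a^*(\cdot)$ is optimal and $v(s,x)=V(s,x)$. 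The main obstacle I anticipate is the rigorous justification that the stochastic integral term has zero expectation (equivalently, is a true martingale and not merely a local one) without assuming integrability of the cost: one must localize along a suitable sequence of stopping times $\tau_n\uparrow T$, use the polynomial growth of $\partial_xv$ and the moment estimates on $\X$ to pass to the limit in $\mathbb{E}[N(T\wedge\tau_n)]=0$, and handle the one-sided (quasi-)integrability of $l,g$ built into the definition of $\mathcal{U}_s$ by a monotone/Fatou argument on the positive and negative parts separately.
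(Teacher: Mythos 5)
Your proposal is correct and follows essentially the same route as the paper: apply Theorem \ref{th:decompo-sol-HJB} along an arbitrary admissible trajectory, add the running cost so that the Lebesgue terms combine into the nonnegative integrand $F_{CV}-F$, show the stochastic integral is a true martingale with zero expectation via the polynomial growth of $\partial_x v$ and the moment bounds $\mathbb{E}\sup_r|\X(r)|^p<\infty$ (the paper invokes Proposition \ref{PChainRule}~(v) for exactly this), and conclude $J(s,x;a(\cdot))=v(s,x)+\mathbb{E}\int_s^T(F_{CV}-F)\,\ud r\ge v(s,x)$, with equality under (\ref{eq:condsuff}). Your anticipated care with the quasi-integrability of the cost matches the paper's handling, which observes that the nonnegativity of $F_{CV}-F$ makes all expectations well defined (possibly $+\infty$) before concluding.
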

\begin{proof}
We choose a control $a(\cdot) 
\in \mathcal{U}_s$  and call $\X$ the related trajectory. Thanks to Theorem
 \ref{th:decompo-sol-HJB} we can write
 \begin{multline}
\label{ex66bis}
g(\X(T)) = v(T, \X(T)) = v(s, x) - \int_s^T F (r, \X(r), \partial_xv(r,\X(r))) \ud r \\
+ \int_s^T \left\langle \partial_x v(r, \X(r)), b(r,\X(r), a(r)) \right\rangle \ud r
+ \int_s^T \left\langle \partial_x v(r, \X(r)), \sigma (r,\X(r)) \ud \W_Q(r) \right\rangle.
\end{multline}
Since both sides of \eqref{ex66bis} are a. s. finite, we can add 
 $\int_s^T l(r, \X(r) , a(r)) \ud r$ to them, obtaining 
\begin{multline}
\label{eq:ex66}
g(\X(T)) + \int_s^T l(r, \X(r) , a(r)) \ud r 
 = v(s, x) + \int_s^T \left\langle \partial_x v(r, \X(r)), \sigma (r,\X(r)) \ud \W_Q(r) \right\rangle \\
+ \int_s^T \left(- F (r, \X(r), \partial_xv(r,\X(r))) +
 F_{CV} (r, \X(r), \partial_xv(r,\X(r)))\right) \ud r.
\end{multline}
Observe now that, by definition of $F$ and $F_{CV}$ we know that 
\[
- F (r, \X(r), \partial_xv(r,\X(r))) + F_{CV} (r, \X(r), \partial_xv(r,\X(r)))
\]
is always positive. So its expectation always exists even if it 
could be $+\infty$, but not $-\infty$ on an event of positive probability.
This shows a posteriori that 
$ \int_s^T l(r, \X(r) , a(r)) \ud r $ cannot be $-\infty$
on a set of positive probability. \\
By \cite{DaPratoZabczyk92} Theorem 7.4, all the momenta of 
$\sup_{r \in [s,T]} \vert \X(r) \vert $ are 
finite.
On the other hand, $\sigma$ is Lipschitz-continuous,
  $v(s,x)$ is deterministic and, 
 since $\partial_x v$ has polynomial growth, 
  then
$$ \mathbb{E} \int_s^T \left\langle \partial_x v(r, \X(r)), \left ( \sigma (r,\X(r)) Q^{1/2} \right ) \left ( \sigma (r,\X(r)) Q^{1/2} \right )^* \partial_x v(r, \X(r)) \right\rangle \ud r
$$
is finite. Consequently, by Proposition \ref{PChainRule} (v) 
\[
 \int_s^\cdot \left\langle \partial_x v(r, \X(r)), \sigma (r,\X(r)) \ud
 \W_Q(r) \right\rangle
\]
is  a true martingale vanishing at $s$.  Consequently,
 its expectation is zero. 
 So the expectation of the right-hand side of (\ref{eq:ex66})
 exists even if it could be $+\infty$; consequently the same holds for the left-hand side.\\
By definition of $J$, we have
\begin{multline}
\label{eq:finale}
J(s,x,a(\cdot)) = \mathbb{E} \bigg [ g(\X(T)) + \int_s^T l(r, \X(r) , a(r)) \ud r \bigg ] = v(s, x) \\
+ \mathbb{E} \int_s^T  \Big ( - F (r, \X(r), \partial_xv(r,\X(r))) + F_{CV}(r, \X(r), \partial_xv(r,\X(r)), a(r)) \Big ) \ud r.
\end{multline}
So minimizing $J(s,x,a(\cdot))$ over $a(\cdot)$ is equivalent to minimize
\begin{equation}
\label{eq:ultima-2}
\mathbb{E} \int_s^T  \Big ( - F (r, \X(r), \partial_xv(r,\X(r))) + F_{CV}(r, \X(r), \partial_xv(r,\X(r)), a(r)) \Big )\ud r,
\end{equation}
which is a non-negative quantity.
As mentioned above, the integrand of such an expression is always nonnegative and then a lower bound for (\ref{eq:ultima-2}) is $0$. If the conditions of point (ii) are satisfied such a bound is attained by the control $a^*(\cdot)$, that in this way is proved to be optimal.

Concerning the proof of (i), since the integrand in (\ref{eq:ultima-2}) is nonnegative,  (\ref{eq:finale}) gives
\[
J(s,x,a(\cdot)) \geq  v(s, x).
\]
Taking the inf over $a(\cdot)$ we get $V(s,x) \geq v(s,x)$, which concludes the proof.
\end{proof}

\begin{Remark} \label{RFeedback}
\begin{enumerate}
\item The first part of the proof does not make use that
$a$ belongs to ${\mathcal U}_s$, but only that
$r \mapsto l(r,\X(\cdot,s,x,a(\cdot)), a(\cdot))$ 
is a.s. strictly bigger then $-\infty$.  Under  that only assumption, 
$a(\cdot)$ is forced to  be admissible, i.e. to belong to
${\mathcal U}_s$.
\item Let $v$ be a strong solution of HJB equation.
Observe that the condition (\ref{eq:condsuff}) can be rewritten as
\[
a^*(t) \in \arg\min_{a\in \Lambda} \Big [ F_{CV} \left( t,\X^*\left( t\right), 
\partial_xv\left(
t,\X^*\left( t\right) \right) ;a \right) \Big ]
.\]
Suppose that for any $(t,y) \in [0,T] \times H $,
$\phi(t,y) =  \arg\min_{a \in \Lambda} \big  ( F_{CV}\left(t,y, 
\partial_xv(t,y);a\right) \big )$
is measurable and single-valued.
Suppose moreover that 
\begin{equation}  \label{EFeedAdm}
\int_s^T l(r,\X^*(r), a^*(r)) dr > - \infty \ {\rm a.s.}
\end{equation}
Suppose that the equation 
\begin{equation}
\label{eq:stateFeed}
\left \{
\begin{array}{l}
\ud \X(t) = \left ( A\X(t)+ b(t,\X(t),\phi(t,\X(t) \right ) \ud t + \sigma(t,\X(t)) \ud \W_Q(t)\\[5pt]
\X(s)=x.
\end{array}
\right.
\end{equation}
admits a unique mild solution $\X^*$.
Now  \eqref{EFeedAdm} and Remark \ref{RFeedback}
imply that $a(\cdot)^*$ is admissible.
Then $\X^*$ is the optimal trajectory of the state variable and $a^*(t) =
 \phi(t, \X^*(t)), 
t \in [0,T]$ is the optimal control. The function $\phi$ is the \emph{optimal feedback} of the system since it gives the optimal control as a function of the state.
\end{enumerate}
\end{Remark}

\begin{Remark} 
Observe that, using exactly the same arguments we used in this section one could treat the (slightly) more general case in which $b$ has the form:
\[
b(t,x,a)= b_0(t,x) + b_g(t,x,a) + b_i(t,x,a).
\]
where $b_g$ and $b_i$ satisfy condition of Theorem \ref{th:decompo-sol-HJB}
and $b_0: [0,T] \times H \rightarrow H$ is continuous. In this case the addendum $b_0$ can be included in the expression of $\mathscr{L}_0$ that becomes the following
\begin{equation}
\label{eq:def-L0-b}
\left \{
\begin{array}{l}
D(\mathscr{L}_0^{b_0}):= \left \{ \varphi \in C^{1,2}([0,T]\times H) \; : \; \partial_x\varphi \in C([0,T]\times H ; D(A^*)) \right \} \\[6pt]
\mathscr{L}_0^{b_0} (\varphi) := \partial_s \varphi+  \left \langle A^* \partial_x \varphi, x \right\rangle + \left \langle \partial_x \varphi, b_0(t,x) \right\rangle + \frac{1}{2} Tr \left [ \sigma(s,x) \sigma^*(s,x) \partial_{xx}^2 \varphi \right ].
\end{array}
\right .
\end{equation}
Consequently in the definition of regular solution the operator $\mathscr{L}_0^{b_0}$ appears instead $\mathscr{L}_0$.
\end{Remark}

\begin{Remark} \label{RFinal}
In the definition of strong solution given
in Definition \ref{def:strong-sol}
one could substitute the assumption
$v  \in C^{0,1}([0,T] \times H)$ 
with $v  \in C^{0,1}([0,T) \times H)  \cap  C^{0}([0,T] \times H)$
such that $\partial_x v$ has polynomial growth. 
This, together with the fact that $\X$ has all moments
(see Theorem 7.4, Chapter 7 of \cite{DaPratoZabczyk92}),
would permit to pass to the limit $T_0 \rightarrow T$
in the last step of the proof of Theorem \ref{th:decompo-sol-HJB}.

\end{Remark}

\bigskip
{\bf ACKNOWLEDGEMENTS:} The research was partially 
supported by the ANR Project MASTERIE 2010 BLAN-0121-01.
It was partially written during the stay of the
second named author in the Bernoulli Center (EPFL Lausanne)
and at Bielefeld University, SFB 701 (Mathematik).\\
The work of the first named author was partially supported by the
 \emph{Post-Doc Research Grant}
of \emph{Unicredit \& Universities} and his research has been developed in the framework of the center of excellence LABEX MME-DII (ANR-11-LABX-0023-01).


\begin{small}

\bibliographystyle{plain}

\bibliography{biblio92}

\end{small}

\end{document}